%

\documentclass[aop]{imsart}

\usepackage{amsthm,amsmath,natbib,amssymb}

\arxiv{arXiv:1201.2767}

\startlocaldefs

\def\qed{\hfill\quad \vrule height7.5pt width4.17pt depth0pt}

\newtheorem{theorem}{Theorem}[section]
\newtheorem{lemma}[theorem]{Lemma}
\newtheorem{cor}[theorem]{Corollary}

\newtheorem{prop}[theorem]{Proposition}

\newtheorem{remark}[theorem]{Remark}

\newcommand{\R}{{\mathbb{R}}}

\newcommand{\N}{{\mathbb{N}}}

\newcommand{\Z}{{\mathbb{Z}}}
\newcommand{\F}{{\mathcal{F}}}

\def\1{\mathbf{1}}

\newcommand{\vep}{\varepsilon}

\endlocaldefs

\begin{document}

\begin{frontmatter}

\title{On the Boundary of the Support of Super-Brownian Motion: With Appendices}
\runtitle{Boundary of the Support of SBM}

\begin{aug}

\author{\fnms{Carl} \snm{Mueller}\corref{}\thanksref{t1,m1}\ead[label=e1]{carl.2013@outlook.com}}
\author{\fnms{Leonid} \snm{Mytnik}\thanksref{t2,m2}\ead[label=e2]{leonid@ie.technion.ac.il}}
\and
\author{\fnms{Edwin} \snm{Perkins}\thanksref{t3,m3}\ead[label=e3]{perkins@math.ubc.ca}}

\thankstext{t1}{Supported by an NSF grant.}
\thankstext{t2}{Supported by an ISF grant.}
\thankstext{t3}{Supported by an NSERC Discovery Grant.}
\runauthor{Mueller, Mytnik, and Perkins}

\affiliation{University of Rochester\thanksmark{m1} and The Technion\thanksmark{m2} and University of British Columbia\thanksmark{m3}}

\address{Department of Mathematics\\University of Rochester
\\Rochester, NY  14627
\\ \printead{e1}}

\address{Faculty of Industrial Engineering and Management,
\\Technion -- Israel Institute of Technology
\\Haifa 32000, Israel
\\ \printead{e2}}

\address{Department of Mathematics
\\University of British Columbia
\\Vancouver, B.C., Canada V6T 1Z2
\\ \printead{e3}}

\end{aug}

\begin{abstract}
We study the density $X(t,x)$  of one-dimensional super-Brownian motion and find the asymptotic behaviour of $P(0<X(t,x)\le a)$ as $a\downarrow 0$ as well as the Hausdorff dimension of the boundary of the support of $X(t,\cdot)$.  The answers are in terms of the lead eigenvalue of the Ornstein-Uhlenbeck generator with a particular killing term.  This work is motivated in part by questions of pathwise uniqueness for associated stochastic partial differential equations.
\end{abstract}

\begin{keyword}[class=MSC]
\kwd[Primary ]{60H15, 60J68}
\kwd[; secondary ]{35K15}
\end{keyword}

\begin{keyword}
\kwd{super-Brownian motion}
\kwd{Hausdorff dimension}
\kwd{stochastic partial differential equations}
\end{keyword}

\end{frontmatter}

\section{Introduction}
\label{intro}
\setcounter{equation}{0}
We consider the jointly continuous density $X(t,x)$ ($t>0,x\in\R$)  of one-dimensional super-Brownian motion given by the unique in law solution of 
\begin{equation}\label{SPDE} \frac{\partial X(t,x)}{\partial t}=\frac{1}{2}\frac{\partial^2 X(t,x)}{\partial x^2}+\sqrt{X(t,x)}\dot W(t,x),\quad X\ge0.\end{equation}
Here $\dot W$ is a space-time white noise, $X_0$ is in the space $M_F(\R)$ of finite measures on the line, and $(X_t,t>0)$ is a continuous process taking values in the space $C_K(\R)$ of continuous functions with compact support in $\R$ (see, for example, Section III.4 of \cite{per02} for these results and the meaning of \eqref{SPDE}).
 We abuse notation slightly and also let $X_t(A)=\int_AX(t,x) dx$ denote the continuous $M_F(\R)$-valued process with density $X(t,\cdot)$ for $t>0$, i.e., the associated super-Brownian motion. 
 
 Our goal is to study the boundary of the zero set of $X_t$, or equivalently the boundary of the support of $X_t$, given by
 \begin{equation}\label{BZdef2a}
BZ_t=\partial(\{x:X(t,x)=0\})=\{x:X(t,x)=0, \forall \delta>0\, X_t((x-\delta,x+\delta))>0\}.
\end{equation}
The two related questions we consider are:\hfil\break
1. How large is $BZ_t$? For example, what is its Hausdorff dimension?\hfil\break
2. What is the asymptotic behaviour of $P(0<X(t,x)<a)$ as $a\downarrow 0$?

As super-Brownian motion models a population undergoing random motion and critical reproduction, a detailed understanding of the interface between the population and 
empty space gives a snapshot of how the population ebbs and flows.  Moreover the answers we found are not what was originally expected.  Standard estimates  show that $X(t,\cdot)$ is locally H\"older continuous of index $1/2-\vep$ for any $\vep>0$ (see Proposition~\ref{classmod} below). But near the zero set of $X(t,\cdot)$, one can expect more regular behavior as the noise in \eqref{SPDE} is mollified.  In fact \cite{mps06} essentially showed that near the zero set of $X(t,\cdot)$ the density is locally H\"older continuous of any index less than one (see Proposition~\ref{improvedmod} below for a precise statement).  The increased regularity led to independent conjectures 14 years ago (one by Carl Mueller and Roger Tribe and the other by Ed Perkins and Yongjin Wang) of the following:

\medskip
\noindent{\bf Conjecture A.} The Hausdorff dimension of $BZ_t$ is zero a.s. 
\medskip

\noindent This also was spurred on by wishful thinking as such a result would
help prove pathwise uniqueness in equations such as \eqref{SPDE}, as we explain next.

The connection with pathwise uniqueness in stochastic pde's with non-Lipschitz coefficients is one reason for our interest in these questions.  Mass moves with a uniform modulus of continuity in equations such as $\eqref{SPDE}$ and more generally in 
\begin{equation}\label{SPDEa} \frac{\partial X(t,x)}{\partial t}=\frac{1}{2}\frac{\partial^2 X(t,x)}{\partial x^2}+X(t,x)^\gamma\dot W(t,x),\ X\ge0,\end{equation}
 for $0<\gamma<1$ (see Theorem~3.5 of \cite{mp92}).
This means one can localize the evolution of solutions to \eqref{SPDEa} in space and so if pathwise uniqueness fails then one expects that the solutions $X,Y$ which separate at time $T$ say, will initially separate at points 
in $BZ_T(X)\cap BZ_T(Y)$, where we have introduced dependence on the particular process.  This is because in the interior of the support, say where $X_T\ge \eta>0$, we have 
Lipschitz continuous coefficients and so solutions should coincide for a positive time due to the uniform modulus of continuity, and in the interior of the zero set solutions will remain at zero for some positive length of time thanks to the same reasoning (and lack of any immigration terms).  
As a result one expects that the larger $BZ_t$ is, the easier it is for solutions to separate, and so the less likely pathwise uniqueness is.  This reasoning is of course heuristic but here are some 
precise illustrations of the principle.

\begin{theorem}\label{pnud}(\cite{yc15}) Let $b:\R\to\R_+$ be a smooth function with support $[0,1]$.  Then pathwise uniqueness fails in 
\begin{equation}\label{SPDEI} \frac{\partial X(t,x)}{\partial t}=\frac{1}{2}\frac{\partial^2 X(t,x)}{\partial x^2}+\sqrt{X(t,x)}\dot W(t,x)+b.\end{equation}
\end{theorem}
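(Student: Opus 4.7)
The plan is to exhibit, on a common filtered probability space and driven by the same white noise $\dot W$, two distinct solutions $X^{(1)}\neq X^{(2)}$ of \eqref{SPDEI} with the same initial data (I would take $X_0 = 0$ for concreteness). The heuristic stated just above the theorem localizes any possible discrepancy to the boundary $BZ_t$ of the support of a solution: on the interior of the support, $\sqrt X$ is locally Lipschitz, so classical stochastic calculus forces two solutions to agree; on the interior of the zero set both solutions remain at $0$ for a positive time (nothing pushes them off, as $b$ is supported elsewhere); only at points of $BZ_t$ is there room for distinct behaviour. Since $b$ has compact support, $BZ_t$ is automatically non-trivial for any such solution, which is what gives the construction a chance of success.

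I would build $X^{(1)}$ by the standard regularization $\sqrt X\leadsto\sqrt{X+\vep_n}$, solving the resulting Lipschitz SPDE and extracting a weak limit; the smoothness and compact support of $b$ cause no new difficulty. To build a second solution with the same $\dot W$, I would attempt $X^{(2)} = X^{(1)}+Z$ where $Z$ is an auxiliary process whose spatial support lies in $\{X^{(1)} = 0\}$. Heuristically this should work: white noise restricted to disjoint spatial regions is independent, so the single $\dot W$ can simultaneously drive $X^{(1)}$ on $\{X^{(1)}>0\}$ and $Z$ on $\{X^{(1)}=0\}$; summing them gives a candidate for \eqref{SPDEI} because $\sqrt{X^{(1)}+Z}=\sqrt{X^{(1)}}+\sqrt Z$ on the union of disjoint supports. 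The naive choice $Z_0=0$ with $Z$ an immigration-free SBM forces $Z\equiv 0$ (SBM without immigration started at $0$ is trivial), so $Z$ must instead be produced from an excursion construction (e.g., via Le~Gall's Brownian snake representation of SBM), with clusters that emerge from the boundary $BZ_t$ of $X^{(1)}$ and then re-attach in a configuration different from the one realized by $X^{(1)}$.

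The main obstacle is precisely this re-attachment step. One must show (i) that the modified configuration really is a solution of \eqref{SPDEI} driven by the \emph{same} $\dot W$, not by a noise equivalent only in law; and (ii) that with positive probability the modification is non-trivial, so $X^{(1)}\neq X^{(2)}$. Both pieces require quantitative control on how mass can emerge from $BZ_t$---essentially lower bounds on the richness of the boundary---which is exactly the type of estimate the present paper develops later. Step (ii) is where I expect the heaviest lifting, since one has to rule out the possibility that the excursion-based modification collapses to the zero modification almost surely; this is presumably where the non-triviality of $BZ_t$ (or at least the fact that it supports non-degenerate excursions of positive mass with positive probability) enters decisively, and why the precise analysis of $BZ_t$ is the right kind of input for such an argument.
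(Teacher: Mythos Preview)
This theorem is cited from \cite{yc15} and is not proved in the present paper; only a brief outline of Chen's argument appears in the introduction. Comparing your proposal to that outline reveals a genuine gap in your identification of the mechanism. The decisive role of the immigration term $b$ is not merely to make $BZ_t$ ``non-trivial'' in some vague sense: as the paper explains, the key step is that $BZ_t\cap[0,1]$ has \emph{positive Lebesgue measure} with positive probability. One fixes $x\in(0,1)$, uses a Poisson point process calculation to show $P(X(t,x)=0)>0$, and observes that the immigration forces $X_t((x-\delta,x+\delta))>0$ a.s.\ for every $\delta>0$, so $x\in BZ_t$ with positive probability; Fubini then gives the claim. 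This is qualitatively different from the immigration-free equation studied in the body of this paper, where Theorem~\ref{dimBZ} gives $\dim(BZ_t)=2-2\lambda_0<1$ and hence $BZ_t$ has zero Lebesgue measure. Your assertion that ``the type of estimate the present paper develops later'' is the right input is therefore off the mark: the estimates here concern the equation \emph{without} immigration, and the paper explicitly stresses that it is the immigration which changes the nature of $BZ_t$ and makes non-uniqueness possible.

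Your proposed construction of an auxiliary $Z$ supported on $\{X^{(1)}=0\}$ is also too loose to constitute a proof. Even granting support disjointness and the resulting identity $\sqrt{X^{(1)}+Z}=\sqrt{X^{(1)}}+\sqrt Z$, you supply no concrete mechanism for $Z$ to be nonzero: immigration-free SBM started from the zero measure is identically zero, and ``emerging from $BZ_t$ and re-attaching in a different configuration'' is not a well-defined operation on the fixed driving noise $\dot W$. The sketch here indicates instead that Chen produces two solutions $X,Y$ and shows they can separate on $\cup_{t\in[0,1]}BZ_t(X)\cap BZ_t(Y)$, exploiting that this set is large in the Lebesgue sense; the engine is the size of the boundary induced by $b$, not an excursion-grafting procedure fed by the fractal-dimension estimates of the present paper.
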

\noindent One step in the proof is to show that $BZ_t\cap [0,1]$ has positive Lebesgue measure with positive probability. This proceeds by first fixing $x\in(0,1)$ and using a Poisson point process calculation to show that $P(X(t,x)=0)>0$. It is then easy to see that the 
$b$-immigration forces $X_t((x-\delta,x+\delta))>0$ a.s. for each $\delta>0$. The proof of the theorem then goes on to show that solutions $X,Y$ can separate in $\cup_{t\in[0,1]}BZ_t(X)\cap BZ_t(Y)$.  In short the presence of the immigration $b$ changes the nature of the boundary of the support and makes it possible to establish pathwise non-uniqueness.  

On the uniqueness side of things, two of us conjectured that the methods of \cite{mp11} would allow one to establish 
\begin{align}\label{uniqconj}&\text{If for some $\vep>0$, $P(0<X(t,x)\le a)\le Ca^{1+\vep}$,}\cr
&\text{then pathwise uniqueness would hold in \eqref{SPDE}.}\end{align}

\noindent We never tried to write out a careful proof of the implication in part because we believed the correct answer to Question 2 above was 

\medskip
\noindent {\bf Conjecture B.} $P(0<X(t,x)\le a)=O(a)$ (which is consistent with Conjecture A above). 
\medskip

\noindent Nonetheless, this would be a rather nice state of affairs as it would suggest 
that the $\gamma=1/2$ case of \eqref{SPDEa} is critical and one could expect pathwise 
uniqueness to hold for $\gamma>1/2$. It is natural to expect that the $\gamma=1/2$ case would then require additional work just as in the classical SDE counterpart resolved by
Yamada and Watanabe 45 years ago (and unlike the signed case for general SPDE's where $3/4$-H\"older continuity in the solution variable is critical by \cite{mp11} and \cite{mmp14}).

Our main results on Questions 1 and 2 will show both Conjectures A and B are in fact false. 
To describe them, for $\lambda>0$, let $V(t,x)=V^\lambda(t,x)$ be the unique solution of 
\begin{equation}\label{Vlambdapde}\frac{\partial V}{\partial t}=\frac{1}{2}\frac{\partial^2 V}{\partial x^2}-\frac{1}{2}V^2,  \quad V(0,x)=\lambda\delta_0(x),
\end{equation}
where $V$ is $C^{1,2}$ on $[0,\infty)\times\R\setminus\{(0,0)\}$. (See \cite{BPT85}, \cite{MP03} and the references therein.)
A simple scaling argument shows that 
\begin{equation}\label{pdescale}
V^{\lambda r}(s,x)=\lambda^2V^r(\lambda^2s,\lambda x)\quad\forall r,\lambda,s>0,\, x\in\R.
\end{equation}
If $E_\mu$ denotes expectation for $X$ starting at $X_0=\mu$, then we have (see, for example, Theorem 1.1 in \cite{MP03} and the references there)
\begin{equation}\label{LLE}E_{\delta_0}(e^{-\lambda X(t,x)})=E_{\delta_x}(e^{-\lambda X(t,0)})=e^{-V^\lambda(t,x)},
\end{equation}
and by the multiplicative property,
\begin{equation}\label{LLEb}E_{X_0}(e^{-\lambda X(t,0)})=\exp\Bigl(-\int V^\lambda(t,y)\,dX_0(y)\Bigr).
\end{equation}
So from the above we see that $V^\lambda(t,x)\uparrow V^\infty(t,x)$ as $\lambda\uparrow \infty$, where 
\begin{equation}\label{Vinfform}P_{\delta_0}(X(t,x)=0)=P_{\delta_x}(X(t,0)=0)=e^{-V^\infty(t,x)}
\end{equation}
 and so 
 \begin{equation}\label{Vinftybound}V^\infty(t,x)\le2/t<\infty\end{equation} 
 because $P_{\delta_0}(X_t\equiv0)=\exp(-2/t)$ (see, e.g., (II.5.12) in \cite{per02}).
If $r\to\infty$ in \eqref{pdescale} with $\lambda^2=s^{-1}$, we get 
\begin{equation}\label{VF} V^\infty(s,x)=s^{-1}F(xs^{-1/2}),
\end{equation}
where $F(x)=V^\infty(1,x)$.  The function $F$ has been studied in the pde literature and can be intrinsically characterized as the solution of an ode. This, and other properties of $F$, are recalled in Section~\ref{FSec}. For now we only need to know that it is a symmetric $C^2$ function on  the line which vanishes at infinity.  Let $Lh(x)=\frac{h''}{2}-\frac{x}{2}h'(x)$ be the generator of the Ornstein-Uhlenbeck process, let $m$ be the standard normal distribution on the line,  and set $L^F(h)=Lh-Fh$.  By standard Sturm-Liouville theory (see Theorem~\ref{fens}) there is a complete orthonormal system for $L^2(m)$ consisting of $C^2$ eigenfunctions for $L^F$, $\{\psi_n:n\in\Z_+\}$, with corresponding negative eigenvalues $\{-\lambda_n\}$ where $\{\lambda_n\}$ is nondecreasing. The largest eigenvalue $\lambda_0$ is simple and satisfies $1/2<\lambda_0<1$. The latter and a bit more is proved in Proposition~\ref{lam0}.  

Here are our answers to the above questions. $\text{dim}(A)$ denotes the Hausdorff dimension of a set $A\subset\R$.
In the next two results $X(t,x)$ is the density of super-Brownian motion satisfying \eqref{SPDE} starting with finite initial measure $X_0$ and $BZ_t$ is defined as in\eqref{BZdef2a}. 
\begin{theorem}\label{lefttailSBM}
(a) For some $C_{\ref{lefttailSBM}}$, for all $a,t>0$, and $x\in\R$,
\[P_{X_0}(0<X(t,x)\le a)\le C_{\ref{lefttailSBM}}X_0(\R)t^{-(1/2)-\lambda_0}a^{2\lambda_0-1}.\]
(b) For all $K\in\N$ there is a $C(K)>0$ so that if $X_0(\R)\le K$, $X_0([-K,K])\ge K^{-1}$, $t\ge K^{-1}$ and $|x|\le K$, then
\[P_{X_0}(0<X(t,x)\le a)\ge C(K)t^{-(1/2)-\lambda_0}a^{2\lambda_0-1}\quad\text{for all }0<a\le \sqrt{t}.\]
\end{theorem}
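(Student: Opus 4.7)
The plan is to reduce both bounds to a deterministic estimate on the difference $V^\infty-V^{1/a}$, using the Laplace-transform identity \eqref{LLEb} for the law of $X(t,x)$. For part (a), I start from the elementary inequality $\1_{\{0<y\le a\}}\le e(e^{-y/a}-\1_{\{y=0\}})$ for $y\ge 0$ (immediate since $e^{-y/a}\ge e^{-1}$ on $(0,a]$). Taking expectations and using \eqref{LLEb} and \eqref{Vinfform} gives
\[
P_{X_0}(0<X(t,x)\le a)\le e\bigl[e^{-\int V^{1/a}(t,y-x)dX_0(y)}-e^{-\int V^\infty(t,y-x)dX_0(y)}\bigr]\le e\,X_0(\R)\sup_{z\in\R}(V^\infty-V^{1/a})(t,z),
\]
where the second inequality uses $V^{1/a}\le V^\infty$ and $e^{-A}-e^{-B}\le B-A$ for $A\le B$. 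By the scaling \eqref{pdescale}, with $\mu:=\sqrt t/a$ and $y:=z/\sqrt t$, one has $V^\infty(t,z)-V^{1/a}(t,z)=t^{-1}[F(y)-G_\mu(y)]$, where $G_\mu(y):=V^\mu(1,y)=\mu^2V^1(\mu^2,\mu y)$ tends to $F(y)$ as $\mu\to\infty$. Part (a) thus reduces to the deterministic estimate $\sup_y|F(y)-G_\mu(y)|\le C\mu^{1-2\lambda_0}$.

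To obtain this rate, I would linearize. Setting $W(T,z):=V^\infty(T,z)-V^1(T,z)\ge 0$, the identity $F(y)-G_\mu(y)=\mu^2W(\mu^2,\mu y)$ holds, so it suffices to show $W(T,z)=O(T^{-1/2-\lambda_0})$ as $T\to\infty$ uniformly for $z/\sqrt T$ bounded. Now $W$ satisfies the linear parabolic equation $\partial_TW=\tfrac12\partial_z^2W-\tfrac12(V^\infty+V^1)W$. In self-similar variables $\tau=\log T$, $\xi=z/\sqrt T$ and $\widetilde W(\tau,\xi):=TW(T,\xi\sqrt T)$, this becomes
\[
\partial_\tau\widetilde W=\bigl[\tilde L+1-\tfrac12(F+H_\tau)\bigr]\widetilde W,\qquad \tilde L:=\tfrac12\partial_\xi^2+\tfrac{\xi}{2}\partial_\xi,
\]
with $H_\tau(\xi):=e^\tau V^1(e^\tau,\xi e^{\tau/2})\to F(\xi)$ as $\tau\to\infty$. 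Writing $\widetilde W=\phi\,m$ (with $m$ the Gaussian density) and using the identity $\tilde L(\phi m)=m(L\phi-\phi/2)$ reduces the limiting ($\tau\to\infty$) equation to $\partial_\tau\phi=(L^F+\tfrac12)\phi$ on $L^2(m)$. Since the top eigenvalue of $L^F=L-F$ is $-\lambda_0$, that of $L^F+\tfrac12$ equals $\tfrac12-\lambda_0<0$, so by Sturm--Liouville decomposition $\phi$ (hence $\widetilde W$) decays at rate $e^{-(\lambda_0-1/2)\tau}$. Translating back, $W(T,z)=O(T^{-1/2-\lambda_0})$ and $F(y)-G_\mu(y)=O(\mu^{1-2\lambda_0})$, completing (a).

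For part (b) I would refine the same spectral analysis to the matching \emph{lower} asymptotic $\widetilde W(\tau,\xi)\sim c\,e^{-(\lambda_0-1/2)\tau}\psi_0(\xi)m(\xi)$ uniformly on compact $\xi$-intervals, i.e.\ $F(y)-G_\mu(y)\sim c\,\mu^{1-2\lambda_0}\psi_0(y)m(y)$, using the simplicity of the top eigenvalue and the strict positivity of the principal eigenfunction $\psi_0m$. Under the hypotheses $X_0([-K,K])\ge K^{-1}$, $|x|\le K$, $t\ge K^{-1}$ the argument $(y-x)/\sqrt t$ is confined to a compact set on which $\psi_0m$ is bounded below, so $\int(V^\infty-V^{1/a})(t,y-x)dX_0(y)\ge c(K)t^{-1/2-\lambda_0}a^{2\lambda_0-1}$. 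Via \eqref{LLEb} and $1-e^{-s}\ge s/2$ for small $s$,
\[
E\bigl[e^{-X(t,x)/a};\,X(t,x)>0\bigr]\ge c'(K)t^{-1/2-\lambda_0}a^{2\lambda_0-1}.
\]
A Karamata-type Tauberian theorem applied to the law of $X(t,x)$ restricted to $(0,\infty)$ (using regularity of the density near $0$, itself extractable from the PDE asymptotic) then converts this Laplace-transform asymptotic at $\lambda=1/a\to\infty$ into the required small-ball bound $P_{X_0}(0<X(t,x)\le a)\ge C(K)t^{-1/2-\lambda_0}a^{2\lambda_0-1}$ for $0<a\le\sqrt t$.

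\textbf{Main obstacle.} The decisive step is the sharp two-sided spectral asymptotic $F-G_\mu\asymp\mu^{1-2\lambda_0}\psi_0m$. The upper bound follows from a supersolution argument based on the principal eigenfunction of $L^F$, but the matching lower bound additionally requires showing that the spectral projection of $\widetilde W$ onto $\psi_0m$ is nondegenerate as $\tau\to\infty$ and that the time-inhomogeneous perturbation $\tfrac12(F-H_\tau)$ in the potential does not destroy the rate during the transient regime. Correctly identifying the limiting self-similar operator as $L^F+\tfrac12$ via the Gaussian-weight conjugation, and justifying the Tauberian inversion (or an equivalent tail estimate on the density of $X(t,x)$ near $0$), are the remaining technical points.
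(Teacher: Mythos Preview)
Your reduction is the same as the paper's: both part (a) and (b) go through the rate estimate on $V^\infty-V^\lambda$ (Theorem~\ref{Vcvgcerate} / Proposition~\ref{Vrate}) followed by a quantitative Tauberian argument (Lemma~\ref{Taub}). Where you diverge is in how you propose to prove the rate on $V^\infty-V^\lambda$. You linearize the PDE directly, pass to self-similar variables, and invoke the spectral gap of $L^F+\tfrac12$ on $L^2(m)$. The paper instead uses the Campbell measure formula (Lemma~\ref{LTequation}) to write $E^X_{X_0}\bigl(\int e^{-\lambda X(t,x)}X(t,x)\,dx\bigr)$ as a Feynman--Kac functional of Brownian motion, transforms to an Ornstein--Uhlenbeck process $Y$, and recognizes the dominant factor as $P^Y(\rho_F>T)$ with $T=\log(\lambda^2 t)$. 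Your linearized equation and the paper's Feynman--Kac exponent are essentially dual formulations of the same object, and both land on the eigenvalue $\lambda_0$ of $L^F$.

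Two points where the paper's route buys something concrete over your sketch. First, the circularity you flag---controlling the perturbation $\tfrac12(F-H_\tau)$ requires a bound on $F-G_\mu$, which is what you are proving---is broken in the paper by a bootstrap (Lemmas~\ref{prelgen} and~\ref{prel}): one first obtains a crude exponent $2\lambda_0-\varepsilon$ via a rough argument, uses that to show $Z_\infty=\exp\bigl(\int_0^\infty(F-H_{e^s})(Y_s)\,ds\bigr)$ is uniformly bounded, and then re-runs the argument to get the sharp exponent. Your PDE approach would need the analogous two-pass structure. Second, the positivity of the limiting constant for the lower bound comes for free in the probabilistic picture: the limit in Proposition~\ref{LTlimit}(a) is the explicit formula \eqref{CForm}, manifestly positive because it is an expectation of a positive quantity. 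In your spectral-projection language this is the statement that $\langle\phi_0,\psi_0\rangle_{L^2(m)}>0$, which would need a separate argument.

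Finally, for the lower bound in (b) you invoke ``a Karamata-type Tauberian theorem''; note that the classical Karamata theorem gives only an asymptotic, not a bound uniform in the parameters $K,t$. The paper uses the elementary quantitative Lemma~\ref{Taub}, which takes upper \emph{and} lower Laplace bounds as input and returns an explicit constant in the tail bound; something of that form is what you actually need.
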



\begin{theorem}\label{dimBZ} For all $X_0\neq 0$ and $t>0$,\hfil\break
(a) $\text{dim}(BZ_t)\le 2-2\lambda_0$ $P_{X_0}$-a.s.\\
(b)  $\text{dim}(BZ_t)= 2-2\lambda_0$ with positive $P_{X_0}$-probability.
\end{theorem}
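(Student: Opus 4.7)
The plan is to deduce both parts from the one-point small-value asymptotics of Theorem~\ref{lefttailSBM} combined with the improved near-zero modulus of continuity of Proposition~\ref{improvedmod}: part (a) via a first-moment / box-counting argument and part (b) via a Frostman-type second-moment argument. The hardest missing ingredient will be a matching two-point bound on $P_{X_0}(0 < X(t, x) \le \vep,\, 0 < X(t, y) \le \vep)$.

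For (a), I would fix $\delta \in (0, 1)$ and set $A_\vep^\delta = \{y \in \R : 0 < X(t, y) \le C_1 \vep^{1-\delta}\}$. Proposition~\ref{improvedmod} should yield that almost surely, for all $\vep$ smaller than a random $\vep_0 > 0$ and every $x \in BZ_t$, the interval $[x-\vep, x+\vep]$ is contained in $\{X(t, \cdot) \le C_1 \vep^{1-\delta}\}$; since boundary points admit positive-mass points arbitrarily close, this forces $BZ_t \subseteq \overline{A_\vep^\delta}$. Theorem~\ref{lefttailSBM}(a) and Fubini then give $E[\text{Leb}(A_\vep^\delta \cap [-K, K])] \le C_K \vep^{(1-\delta)(2\lambda_0 - 1)}$ for each $K > 0$. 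The main combinatorial step is to convert this into a covering bound: split the connected components of $\{X(t, \cdot) > 0\} \cap [-K-1, K+1]$ into long ones (length $\ge \vep$), which number $O(K/\vep)$, and short ones (length $< \vep$), which lie entirely in $A_\vep^\delta$ by the modulus bound. A Vitali-type packing argument on the boundary points associated to short components should show that the $\vep$-covering number of $BZ_t \cap [-K, K]$ is at most $O(\vep^{-1} \text{Leb}(A_\vep^\delta \cap [-K-1, K+1])) + O(K/\vep)$. Taking expectations along $\vep_n = 2^{-n}$ and applying Chebyshev and Borel--Cantelli yield $\text{dim}(BZ_t \cap [-K, K]) \le 2 - 2\lambda_0 + \eta(\delta)$ almost surely; sending $\delta \downarrow 0$, and then $K \uparrow \infty$ using that $X_t$ has compact support $P_{X_0}$-a.s., proves (a).

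For (b), I would introduce the normalized random measure
\[
\mu_\vep(dy) = \vep^{-(2\lambda_0 - 1)} \mathbf{1}\{0 < X(t, y) \le \vep\}\, dy
\]
restricted to an interval $[-K, K]$ on which the hypotheses of Theorem~\ref{lefttailSBM}(b) hold with positive $P_{X_0}$-probability. Parts (a) and (b) of that theorem give $E[\mu_\vep([-K, K])]$ bounded above and below by positive constants uniformly in $\vep \in (0, \sqrt{t}]$. Fix $\alpha < 2 - 2\lambda_0$. Granted a two-point bound
\[
P_{X_0}(0 < X(t, x) \le \vep,\, 0 < X(t, y) \le \vep) \le C \vep^{2(2\lambda_0 - 1)} h(|x - y|)
\]
with $\int_0 r^{-\alpha} h(r)\, dr < \infty$, a direct calculation gives $\sup_\vep E[I_\alpha(\mu_\vep)] < \infty$, where $I_\alpha$ denotes the $\alpha$-energy. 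Paley--Zygmund applied to $\mu_\vep([-K, K])$ then produces, with positive probability, a subsequential weak limit $\mu$ of $\mu_{\vep_n}$ along $\vep_n \downarrow 0$ that is a non-trivial finite random measure of finite $\alpha$-energy supported on $\bigcap_n \overline{\{0 < X(t, \cdot) \le \vep_n\}} \subseteq BZ_t$ (a point in this intersection has $X(t, \cdot) = 0$ by continuity and is a limit of positive-mass points). Frostman's lemma then gives $\text{dim}(BZ_t) \ge \alpha$ on that event, and $\alpha \uparrow 2 - 2\lambda_0$ concludes (b).

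The principal obstacle is the two-point estimate. I would expect it to come from analyzing the joint Laplace functional $E_{X_0}[\exp(-\lambda_1 X(t, x_1) - \lambda_2 X(t, x_2))]$ as $\lambda_1, \lambda_2 \to \infty$, producing a two-parameter generalization of the PDE \eqref{Vlambdapde}; the leading exponential rate should arise from a Sturm--Liouville problem interpolating between two copies of the eigenvalue $\lambda_0$ (in the well-separated regime, where the positions decouple) and a single copy (as $|x_1 - x_2| \to 0$, where one effectively sees a single location), yielding a decay profile of the required form. A secondary but non-trivial technical point is carrying out the short-component Vitali packing in part (a), which requires controlling the limit points of short excursions of $X(t, \cdot)$.
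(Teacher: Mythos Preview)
Your approach to (b) is close to the paper's in spirit. The paper proves exactly the two-point estimate you anticipate, in the Laplace-weighted form
\[
\lambda^{4\lambda_0}E^X_{X_0}\bigl(X_t(z_1)X_t(z_2)e^{-\lambda X_t(z_1)-\lambda X_t(z_2)}\bigr)\le C\bigl(|z_1-z_2|^{1-2\lambda_0}+1\bigr)
\]
(Proposition~\ref{secmombnd}). From there, however, the paper does \emph{not} build a Frostman measure on $BZ_t$ via weak limits of your $\mu_\vep$; it in fact states the existence of such a limit as an open conjecture. Instead it runs an inclusion--exclusion argument on the approximating sets $BZ(\vep)=\{k_1\vep\le X(t,\cdot)\le k_2\vep\}$ to obtain a capacity condition $P^X_{X_0}(A\cap BZ_t\neq\emptyset)\ge c\,C(g_{2\lambda_0-1})(A)$ (Theorem~\ref{capcond}), and then converts this to the dimension lower bound by intersecting $BZ_t$ with the range of an independent subordinator tuned so that its range has positive $g_{2\lambda_0-1}$-capacity but is avoided by any set of dimension below $2-2\lambda_0$ (Lemma~\ref{sublem}, Theorem~\ref{dimlb}). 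Your Paley--Zygmund route is a reasonable alternative, though the passage from uniform-in-$\vep$ lower bounds on $P(\mu_\vep(1)\ge c)$ to positive probability of a non-trivial weak limit, and the verification that any such limit is supported on $BZ_t$, would need care.

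Your approach to (a) has a real gap. Controlling $E[\text{Leb}(A_\vep^\delta)]$ does \emph{not} control the $\vep$-covering number of $BZ_t$, and the Vitali argument you sketch cannot close the gap. In your short/long decomposition, the short components of $\{X(t,\cdot)>0\}$ do lie in $A_\vep^\delta$, so their total length is at most $\text{Leb}(A_\vep^\delta)$; but the \emph{number} of short components---and hence of their endpoints in $BZ_t$---is not bounded by $\vep^{-1}\text{Leb}(A_\vep^\delta)$, since the individual lengths can be arbitrarily small. Concretely, if $x_1,\dots,x_N\in BZ_t$ are $4\vep$-separated and each $x_i$ is the endpoint of a short component $I_i$, the $I_i$ are disjoint and contained in $A_\vep^\delta$, yet $\sum|I_i|$ may be tiny while $N$ is large. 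The paper closes this with a genuinely new ingredient, Theorem~\ref{lowdensint}: a bound
\[
P^X_{X_0}\bigl(0<X_t([x,x+\vep])\le \vep^2 M\bigr)\le C\,M^{19}\vep^{2\lambda_0-1}
\]
on the \emph{integral} of $X_t$ over a short interval being positive but small. Combined with the improved modulus (Theorem~\ref{improvedmod}), this directly gives $P([x,x+\vep]\cap BZ_t\neq\emptyset)\le C\vep^{2\lambda_0-1-\delta}$, after which the covering argument is one line. The proof of this integral bound (Section~\ref{5.8}) requires its own analysis---a Feynman--Kac study of $V(b1_{[0,\vep]})$ and an iteration lemma for the small-time contribution---and does not follow from the pointwise bound of Theorem~\ref{lefttailSBM}(a).
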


Note in the above results that both $2\lambda_0-1$ and $2-2\lambda_0$ are in $(0,1)$ and so these results do disprove Conjectures A and B.
In Theorem~\ref{dimBZ}(b) one expects that $\text{dim}(BZ_t)=2-2\lambda_0$ a.s. on $\{X_t\neq 0\}$ but we have not been able to show this. 

 \begin{remark} Both the above results extend immediately to solutions of 
 \begin{equation}\label{genSPDE}\frac{\partial X(t,x)}{\partial t}=\frac{\sigma^2}{2}\frac{\partial^2 X(t,x)}{\partial x^2}+\sqrt{\gamma X(t,x)}\dot W(t,x).
 \end{equation}
 where the constants in Theorem~\ref{lefttailSBM} now may depend on $\gamma,\sigma^2>0$.
 This is clear since a simple scaling result shows that if $X$ is the solution of \eqref{SPDE}, then $\gamma\sigma^{-2}X(\sigma^2t,x)$ has the unique law of any solution of 
\eqref{genSPDE}. 
\end{remark}

Theorem~\ref{lefttailSBM} is contained in Theorem~\ref{SBMdensity} below. A Tauberian
theorem will show that $P_{\delta_0}(0<X(t,x)\le a)\sim a^\alpha$ as $a\downarrow 0$ if and only if $$E_{\delta_0}\Bigl(e^{-\lambda X(t,x)}1(X(t,x>0)\Bigr)\sim \lambda^{-\alpha}\text{ as }\lambda\uparrow\infty.$$ Here $\sim$ means bounded above and below by positive constants and $\alpha=2\lambda_0-1$.  If we use \eqref{LLEb} and \eqref{Vinfform}, this becomes
\[e^{-V^\lambda(t,x)}-e^{-V^\infty(t,x)}\sim\lambda^{-\alpha}\quad\text{ as }\lambda\uparrow\infty,\]
and using \eqref{Vinftybound} this reduces to
\begin{equation}\label{}V^\infty(t,x)-V^\lambda(t,x)\sim\lambda^{-\alpha}\quad\text{as }\lambda\uparrow\infty.
\end{equation}
We have not been careful with dependence on $t$ or $x$ but by Dini's theorem we know that $\lim_{\lambda\to\infty}V^\lambda(t,x)=V^\infty(t,x)$ uniformly for $(t,x)$ in compact
subsets of $[0,\infty)\times\R\setminus\{(0,0)\}$--this is Theorem 1 of \cite{KP85}.  Evidently to prove Theorem~\ref{lefttailSBM} we need a rate of convergence  in the Kamin and Peletier result, and Proposition~\ref{Vrate} will give the following which may be of interest to pde specialists and will be used to complete the proof of Theorem~\ref{lefttailSBM}.

\begin{theorem}\label{Vcvgcerate}
There are positive constants $\bar C$ and for each $K\ge 1$, $\underline C(K)$, such
that  for all $t>0$,
\begin{align}\label{Vcrate2}
\sup_x V^\infty(t,x)-V^\lambda(t,x)\le \bar Ct^{-\frac{1}{2}-\lambda_0}\lambda^{-(2\lambda_0-1)}\ \forall\lambda>0,
\end{align}
and
\begin{align}\label{Vcrate}
\underline C(K)t^{-\frac{1}{2}-\lambda_0}\lambda^{-(2\lambda_0-1)}\le \inf_{|x|\le K\sqrt t} V^\infty(t,x)-V^\lambda(t,x)\ \forall \lambda\ge t^{-1/2}.\end{align}
\end{theorem}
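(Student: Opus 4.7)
The plan is to rescale to self-similar coordinates where $V^\infty - V^\lambda$ becomes a solution of a nonlinear heat equation approaching the zero fixed point, and then extract the rate from the leading eigenvalue of its linearization. By \eqref{pdescale} and \eqref{VF} we have $V^\infty(t,x)-V^\lambda(t,x)=t^{-1}H^\mu(x/\sqrt{t})$, where $H^\mu(y):=F(y)-V^\mu(1,y)\ge 0$ and $\mu=\lambda\sqrt{t}$, so the estimates \eqref{Vcrate2}--\eqref{Vcrate} reduce to
\[\sup_y H^\mu(y)\le \bar C\,\mu^{-(2\lambda_0-1)}\ \ (\mu>0),\qquad \inf_{|y|\le K} H^\mu(y)\ge \underline C(K)\,\mu^{-(2\lambda_0-1)}\ \ (\mu\ge 1),\]
with $\mu\ge 1$ corresponding to the hypothesis $\lambda\ge t^{-1/2}$. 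Parametrize by $\tilde H(\tau,y):=H^{\lambda e^{\tau/2}}(y)=sW(s,y\sqrt{s})$ with $s=e^\tau$ and $W=V^\infty-V^\lambda$; a chain-rule computation using the self-similar profile ODE $\tilde L F+F-\tfrac12F^2=0$, where $\tilde L:=\tfrac12\partial_y^2+\tfrac{y}{2}\partial_y$, gives
\[\partial_\tau\tilde H=\tilde L\tilde H+(1-F)\tilde H+\tfrac12\tilde H^2,\quad 0\le\tilde H\le F,\quad \tilde H(\tau,\cdot)\to 0\ \text{as}\ \tau\to\infty.\]
The identity $(\tilde L+\tfrac12)\rho=\rho L$, with $\rho(y)=(2\pi)^{-1/2}e^{-y^2/2}$, yields the intertwining $(\tilde L+1-F)\rho=\rho(L^F+\tfrac12)$, so $\rho\psi_0$ is a positive eigenfunction of $\tilde L+1-F$ with eigenvalue $\tfrac12-\lambda_0<0$. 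Since $e^{(\frac12-\lambda_0)\tau}=\lambda^{2\lambda_0-1}\mu^{-(2\lambda_0-1)}$, this is exactly the rate sought.

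For the upper bound I would construct a super-solution $\bar H$ of the $\tilde H$-equation with $\bar H(\tau_0,\cdot)\ge\tilde H(\tau_0,\cdot)$ and $\sup_y\bar H(\tau,\cdot)\lesssim e^{(\frac12-\lambda_0)(\tau-\tau_0)}$; quasi-monotonicity of the nonlinearity in $\tilde H$ then yields $\tilde H\le\bar H$ on $[\tau_0,\infty)$. The candidate $\bar H(\tau,y)=c(\tau)\rho(y)\psi_0(y)$ reduces the super-solution inequality to the Bernoulli ODE $\dot c\ge-(\lambda_0-\tfrac12)c+\tfrac{M}{2}c^2$ with $M=\sup_y\rho\psi_0$, whose trajectories starting below $2(\lambda_0-\tfrac12)/M$ decay at the correct rate $c(\tau)\sim e^{-(\lambda_0-\frac12)\tau}$. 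For the lower bound, since $\tilde H^2/2\ge 0$, the function $\tilde H$ is itself a super-solution of the linear equation $\partial_\tau U=(\tilde L+1-F)U$, so $\tilde H\ge U$ for $U$ with $U(\tau_0,\cdot)=\tilde H(\tau_0,\cdot)$; via the intertwining, $U(\tau,y)=e^{\tau/2}\rho(y)E_y^{\mathrm{OU}}[\rho^{-1}\tilde H(\tau_0,Y_\tau)\exp(-\int_0^\tau F(Y_u)\,du)]$, and---after truncating the initial data to a compact set, which only shrinks $U$---a standard spectral lower bound for the killed OU semigroup gives $U(\tau,y)\gtrsim\rho(y)\psi_0(y)e^{-(\lambda_0-\frac12)\tau}$ on $\{|y|\le K\}$, where $\rho\psi_0$ is bounded below.

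The main obstacle is the pointwise initial condition for the upper-bound super-solution. The Gaussian tail of $\rho\psi_0$ cannot dominate the algebraic tail of $\tilde H\le F\sim|y|^{-2}$, so $\sup_y\tilde H(\tau_0,y)/(\rho\psi_0)(y)=\infty$ for every $\tau_0$; equivalently, $K:=\rho^{-1}\tilde H\notin L^2(m)$ because $\int\tilde H\psi_0\,dy$ diverges (since $\psi_0\sim|y|^{2\lambda_0}$ at infinity and $2\lambda_0-2>-1$), which rules out any direct $L^2(m)$-spectral expansion of $K$. I would therefore enlarge the super-solution with a moving algebraic-tail barrier of the form $d(\tau)F(y)\chi(|y|/R(\tau))$ supported on $|y|>R(\tau)$, with $R(\tau)\uparrow\infty$ chosen so that $F(R(\tau))\lesssim e^{-(\lambda_0-\frac12)\tau}$, and control the nonlinearity by a bootstrap based on the rough Kamin--Peletier uniform convergence, using the resulting smallness of $\tilde H$ to absorb $\tilde H^2/2$ as a sub-leading perturbation of the linear flow. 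Gluing the spectral profile in the bulk to the tail barrier, and verifying the super-solution inequality uniformly in $y$, is where I expect the real work to lie.
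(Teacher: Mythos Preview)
Your approach is genuinely different from the paper's and, modulo one concrete error, is a viable alternative.

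\textbf{Comparison with the paper.} The paper never works directly with the self-similar PDE for $\tilde H$. Instead it uses the Laplace functional of super-Brownian motion and the Campbell measure formula (Lemma~4.1) to write
\[
E^X_{X_0}\Bigl(\int e^{-\lambda X(t,x)}h(x)X(t,x)\,dx\Bigr)
\]
as an expectation over a Brownian path, then time-changes to an Ornstein--Uhlenbeck process so that the decay is governed by the survival probability $P_x(\rho_F>T)$ of the killed OU, for which the eigenfunction expansion of Theorem~2.3 gives the rate $e^{-\lambda_0 T}$. A bootstrap (Lemma~4.4 for a preliminary exponent, then Proposition~4.5 for the sharp one) and a Tauberian step (Lemma~4.3) convert this into the pointwise bounds on $V^\infty-V^\lambda$. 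Your intertwining $(\tilde L+1-F)\rho=\rho(L^F+\tfrac12)$ is exactly the PDE shadow of that Brownian-to-OU change of variables, so the two arguments meet at the same eigenvalue $\lambda_0$. The probabilistic route buys the paper more: the Campbell decomposition and the limit in Proposition~4.5 are reused for the second-moment bound (Proposition~5.1) and the lower bound on $\dim(BZ_t)$. Your route would be more self-contained if the goal were only Theorem~1.5.

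\textbf{The gap.} Your ``main obstacle'' is based on an incorrect tail for $F$. You write $F\sim|y|^{-2}$, but by \eqref{ode}(iii) in the paper, $F(y)\sim c_0\,y\,e^{-y^2/2}$ as $y\to\infty$. With this, and with your (correct) heuristic $\psi_0(y)\sim c|y|^{2\lambda_0}$, one gets
\[
\frac{F(y)}{\rho(y)\psi_0(y)}\sim c'\,|y|^{\,1-2\lambda_0}\longrightarrow 0,
\]
since $2\lambda_0>1$. Hence $F\le C\,\rho\psi_0$ globally, and your super-solution $c(\tau)\rho\psi_0$ \emph{does} dominate $\tilde H(\tau_0,\cdot)\le F$ from the outset; the moving algebraic-tail barrier is unnecessary. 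Likewise $\rho^{-1}\tilde H\le \rho^{-1}F\sim c\,y$, so $\rho^{-1}\tilde H\in L^2(m)$ and $\int \tilde H\,\psi_0\,dy<\infty$, contrary to your claims. The genuine technical point you are glossing over is not the tails but the comparison principle itself: $\tilde L$ has \emph{outward} drift $\tfrac{y}{2}\partial_y$, so the maximum principle on $\R$ requires growth control at infinity. This is exactly what the intertwining handles---work with $K=\rho^{-1}\tilde H$, which lies in $L^2(m)$ and evolves by the \emph{inward}-drift operator $L^F+\tfrac12$, and run the spectral/comparison argument there.
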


The lower bound in Theorem~\ref{dimBZ}(b) is established in Section~\ref{lbondim} (see Theorem~\ref{dimlb}) by first establishing a capacity condition for a set to be non-polar for $BZ_t$ (Corollary~\ref{polbz}) through a Frostman-type argument, and then taking the above set to be the range of an appropriate subordinator and using the potential theory for the subordinator (a well-known trick). The key to the above capacity condition is a second moment bound (Proposition~\ref{secmombnd}) which is established in Section~\ref{5.1}.  The corresponding upper bound is proved in Theorem~\ref{dimubnd} in Section~\ref{dimubound} by first modifying the proof of Theorem~\ref{lefttailSBM} to get a bound on $P_{X_0}(0<X_t([x,x+\vep])\le \vep^2M)$ (Theorem~\ref{lowdensint} which is proved in Section~\ref{5.8}). One then uses this and the improved modulus of continuity for $X$ near its zero set (Proposition~\ref{improvedmod}) to carry out a standard covering argument which in fact bounds the box dimension.

Although the failure of Conjectures A and B indicate that our approach may
not shed light on the pathwise of uniqueness of solutions to \eqref{SPDE} we 
remain optimistic that progress can be made on \eqref{SPDEa} for some values
of $\gamma$ in $(1/2,3/4]$. In fact \eqref{uniqconj} was a special case of the conjecture
\begin{align}\label{uniqconjg}&\text{if for some $\alpha>3-4\gamma$, $P(0<X(t,x)\le a)\le Ca^{\alpha}$,}\cr
&\text{then pathwise uniqueness would hold in \eqref{SPDEa}.}\end{align}
Note for $\gamma>3/4$ one can take $\alpha=0$ but in this case pathwise uniqueness
is a special case of the main result in \cite{mp11} whose ideas underly our heuristic
proof of \eqref{uniqconjg}.  There is also an exponential dual to solutions
of \eqref{SPDEa} for $\gamma\in[1/2,1)$ (see \cite{myt98w}) and we believe the methods of this paper can be used to resolve the lefthand tail asymptotics for solutions of \eqref{SPDEa} as well.  We expect power tail behaviour for all $\gamma$, and so by \eqref{uniqconjg}, for $\gamma<3/4$ but close to $3/4$,  
we conjecture that pathwise uniqueness will hold in \eqref{SPDEa}.  
Hence, although the $3/4$-H\"older condition in \cite{mp11} is sharp for pathwise uniqueness in general (by \cite{mmp14}), it would not be sharp for the family of non-negative solutions to \eqref{SPDEa}.  \\

\noindent{\bf Convention.} We will use $E^Z_{a}$ to denote expectation for the Markov process $Z$ starting at a point $a$ and (abusing the notation slightly) use $E^Z_\mu$ to denote the corresponding expectation where $Z_0$ now has law $\mu$--sometimes $\mu$ will be only a finite measure.  
 
 \section{Eigenfunction Expansions}\label{sec:efn}
\setcounter{equation}{0}
Let $Y_t$ denote the Ornstein-Uhlenbeck process associated with the infinitesimal generator $L$. In this section we usually will drop the $Y$ in the notation $E^Y_x$.  Its semigroup is denoted by $P_t$ and its resolvent by $R_\lambda$.  If $\phi\in C[-\infty,\infty]$ (the space of continuous functions with finite limits at $\pm\infty$) with $\phi\ge 0$, we let $L^\phi h=Lh-\phi h$, the generator associated with the diffusion, $Y^\phi$, obtained by killing $Y$ at time $\rho_\phi=\inf\{t:\int_0^t\phi(Y_s)\,ds>e\}$, where $e$ is an independent exponential variable.  We denote its semigroup and resolvent by $P^\phi_t$ and $R_\lambda^\phi$ ($\lambda>0$), respectively.  The above semigroups are strongly continuous contraction semigroups on $L^2(m)$.  The contraction part is elementary as $m$ is stationary for $Y$. For the strong continuity see Lemma~\ref{sgc} below for $P_t$ and it is easy to check that $\lim_{t\to 0}\Vert P_tf-P^\phi_t f\Vert_2=0$ for all $f\in L^2(m)$.  For now we will consider $L$ and $L^\phi$ defined on $D=\{h\in C^2\cap L^2(m): Lh\in L^2(m)\}$.  

\begin{lemma}\label{sgc} For all $f\in L^2(m)$, $\lim_{t\downarrow 0}E_m((f(Y_t)-f(Y_0))^2)=0$. 
\end{lemma}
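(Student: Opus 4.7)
The plan is to exploit the stationarity of $m$ under the Ornstein--Uhlenbeck dynamics to reduce the statement to an approximation argument with continuous bounded functions.

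First I would simplify the expression. Since $m$ is invariant for $Y$, under $P_m$ the random variable $Y_t$ has law $m$ for every $t\ge 0$, so $E_m(f(Y_t)^2)=E_m(f(Y_0)^2)=\Vert f\Vert_2^2$. Expanding the square therefore gives
\[
E_m\bigl((f(Y_t)-f(Y_0))^2\bigr)=2\Vert f\Vert_2^2-2\langle P_tf,f\rangle_m,
\]
so it suffices to show $\langle P_tf,f\rangle_m\to\Vert f\Vert_2^2$ as $t\downarrow 0$, or equivalently that $\Vert f(Y_t)-f(Y_0)\Vert_{L^2(P_m)}\to 0$.

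Second I would verify the claim for $f\in C_b(\R)$. Because $Y$ has continuous sample paths, $f(Y_t)\to f(Y_0)$ as $t\downarrow 0$ pointwise on the sample space, and $|f(Y_t)-f(Y_0)|^2\le 4\Vert f\Vert_\infty^2$, so bounded convergence under $P_m$ yields $E_m((f(Y_t)-f(Y_0))^2)\to 0$.

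Third I would extend to arbitrary $f\in L^2(m)$ by a density argument. Since $m$ is a finite Borel measure on $\R$, $C_b(\R)$ is dense in $L^2(m)$; given $\vep>0$, pick $g\in C_b(\R)$ with $\Vert f-g\Vert_2<\vep$. Using stationarity of $m$ at times $0$ and $t$,
\[
\Vert f(Y_t)-f(Y_0)\Vert_{L^2(P_m)}\le \Vert (f-g)(Y_t)\Vert_{L^2(P_m)}+\Vert g(Y_t)-g(Y_0)\Vert_{L^2(P_m)}+\Vert (f-g)(Y_0)\Vert_{L^2(P_m)}
\]
\[
=2\Vert f-g\Vert_2+\Vert g(Y_t)-g(Y_0)\Vert_{L^2(P_m)}.
\]
Letting $t\downarrow 0$ and applying the previous step to $g$ bounds $\limsup_{t\downarrow 0}\Vert f(Y_t)-f(Y_0)\Vert_{L^2(P_m)}$ by $2\vep$, and since $\vep>0$ is arbitrary the proof is complete.

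The argument is entirely routine: the only nontrivial inputs are the invariance of $m$ under $Y$ and the continuity of its paths, both of which are standard for the Ornstein--Uhlenbeck process, so there is no real obstacle.
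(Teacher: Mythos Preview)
Your proof is correct and follows essentially the same approach as the paper: use stationarity of $m$ to reduce to a dense class of nice functions (the paper uses continuous functions with compact support, you use $C_b(\R)$), and then conclude via dominated/bounded convergence using path continuity of $Y$. Your write-up is simply more explicit than the paper's two-line sketch.
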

\begin{proof}  As $Y_t$ is stationary under $P_m$, a standard approximation argument allows us to assume $f$ is continuous with compact support.  The result now follows by Dominated Convergence. 
\end{proof}
We let $L_0^\phi$ and $L_0$ denote the infinitesimal generators of the $L^2(m)\equiv L^2$-semigroups $P_t^\phi$ and $P_t$, respectively, on their domains $D(L_0^\phi)$ and $D(L_0)$, respectively.  So, for example 
$$D(L_0)=\{f\in L^2:\exists L_0f\in L^2\text{ such that }\lim_{t\downarrow 0}\Vert(P_tf-f)/t-L_0f\Vert_2=0\}.$$ The subscript $0$ is a temporary measure to avoid confusion which we address now. 

\begin{lemma} \label{gens}(a) $D(L_0^\phi)=D(L_0)$ and $L_0^\phi f=L_0f-\phi f$ for all $f\in D(L_0)$.\\
(b) $L_0^\phi$is an extension of the differential operator $L^\phi$, the latter on $D$.
\end{lemma}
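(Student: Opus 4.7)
For part (a), the plan is to show that for every $f \in L^2(m)$,
$$t^{-1}(P^\phi_t f - P_t f) \to -\phi f \quad \text{in } L^2(m) \text{ as } t \downarrow 0. \qquad (\star)$$
Once $(\star)$ is in hand, writing $(P^\phi_t f - f)/t = (P^\phi_t f - P_t f)/t + (P_t f - f)/t$ gives immediately that one ratio has an $L^2(m)$ limit iff the other does, yielding $D(L^\phi_0) = D(L_0)$ together with $L^\phi_0 f = L_0 f - \phi f$. To prove $(\star)$ I would use the Feynman--Kac representation $P^\phi_t f(x) = E_x[f(Y_t) e^{-A_t}]$ with $A_t = \int_0^t \phi(Y_s)\,ds$. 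Since $\phi \in C[-\infty,\infty]$ is bounded by some $M$, the expansion $e^{-A_t}-1 = -A_t + (e^{-A_t}-1+A_t)$ with remainder controlled by $A_t^2/2 \le (Mt)^2/2$, combined with Cauchy--Schwarz and the stationarity of $m$ under $P_t$, shows
$$P^\phi_t f - P_t f = -E_\cdot[f(Y_t) A_t] + R_t, \qquad \Vert R_t\Vert_{L^2(m)} \le \tfrac{(Mt)^2}{2}\Vert f\Vert_2.$$
For the linear piece, Fubini and the Markov property give $E_x[f(Y_t) A_t] = \int_0^t P_s(\phi\,P_{t-s} f)(x)\,ds$. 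Dividing by $t$, the Ces\`aro-type average converges in $L^2(m)$ to $\phi f$ thanks to the boundedness of $\phi$ and the strong continuity of $P_t$ established in Lemma~\ref{sgc}.

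For part (b), given $h \in D$, part (a) reduces the task to showing $h \in D(L_0)$ with $L_0 h = Lh$, for then $L^\phi_0 h = L_0 h - \phi h = Lh - \phi h = L^\phi h$ (noting $\phi h \in L^2(m)$ by boundedness of $\phi$). I would apply It\^o's formula to $h(Y_s)$ to obtain
$$h(Y_t) - h(Y_0) = \int_0^t Lh(Y_s)\,ds + \int_0^t h'(Y_s)\,dB_s,$$
and use the Gaussian integration-by-parts identity $\int (h')^2\,dm = -2\int h\cdot Lh\,dm$ to deduce $h' \in L^2(m)$ from $h, Lh \in L^2(m)$. Under the stationary law $P_m$ this makes the stochastic integral a genuine $L^2$-martingale, so conditioning on $Y_0$ yields
$$P_t h(x) - h(x) = \int_0^t P_s(Lh)(x)\,ds$$
for $m$-a.e.\ $x$, and hence as an identity in $L^2(m)$. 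Dividing by $t$ and invoking strong continuity of $P_t$ applied to the fixed element $Lh \in L^2(m)$ gives $t^{-1}(P_t h - h) \to Lh$ in $L^2(m)$, as required.

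The main technical obstacle is the justification of the integration-by-parts identity and the $L^2$-martingale property of the It\^o integral for $h \in D$, because elements of $D$ need only satisfy $h, Lh \in L^2(m)$ and so may grow polynomially at $\pm\infty$ along with their derivatives. Both points are handled by a cutoff approximation exploiting the Gaussian weight in $m$: multiply $h$ by a smooth bump supported in $[-N,N]$, verify the identities in this truncated setting where everything is bounded with bounded derivatives, and pass to the limit using $\Vert h\Vert_2, \Vert Lh\Vert_2 < \infty$. The remaining bookkeeping is standard strong-continuity manipulation of contraction semigroups on $L^2(m)$.
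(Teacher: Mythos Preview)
Your argument is correct and matches the paper's approach: part (a) is what the paper calls a ``routine calculation,'' and part (b) in the paper likewise rests on $M_f(t)=f(Y_t)-f(Y_0)-\int_0^t Lf(Y_s)\,ds$ being a square-integrable martingale under $P_m$, after which the paper bounds $\|(P_tf-f)/t-Lf\|_2^2$ by $t^{-1}\int_0^t E_m[(Lf(Y_s)-Lf(Y_0))^2]\,ds$ and invokes Lemma~\ref{sgc}---equivalent to your strong-continuity step. One small refinement: in your cutoff argument for $h'\in L^2(m)$, let the bump $\chi_N$ transition over $[N,2N]$ rather than $[N,N+1]$, so that $|x\chi_N'|$ and $|\chi_N''|$ remain bounded and the error terms $\int h^2\chi_N L\chi_N\,dm$ vanish using only $h\in L^2(m)$, without needing a rate on the tail $\int_{|x|\ge N} h^2\,dm$.
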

\begin{proof} (a) This is a routine calculation. The fact that $\phi\in C[-\infty,\infty]$ helps here. 

\noindent(b) By the above we may assume $\phi=0$.  If $f\in D$, then $M_f(t)=f(Y_t)-f(Y_0)-\int_0^tLf(Y_s)\,ds$ is a square integrable martingale under $P_m$.  Therefore
\begin{eqnarray*}\Vert (P_tf-f)/t-Lf\Vert_2^2&=&\int\Bigl(E_x\Bigl(\int_0^tLf(Y_s)\,ds\Bigr)/t-Lf(x)\Bigr)^2dm(x)\\
&\le&\int_0^tE_m((Lf(Y_s)-Lf(Y_0))^2)ds/t.
\end{eqnarray*}
The last expression approaches $0$ as $t\to0+$ by the previous Lemma, and the result follows.
\end{proof}
 Henceforth we will drop the subscript $0$'s on $L_0^\phi$ (in view of the above result this should cause no confusion). 

Here is the result we will need to describe our main results. 
In (e), $C([0,\infty),\R)$ is the usual space of continuous paths with the topology of uniform convergence on bounded time sets.

\begin{theorem}\label{fens} (a) There is a complete orthonormal system (cons), $\{\psi_n:n\in \Z_+\}$, of $C^2$ eigenfunctions in $L^2(m)$ for $L^\phi$ satisfying $L^\phi\psi_n=-\lambda_n\psi_n$, where $\{\lambda_n\}$ is a nondecreasing non-negative sequence diverging to $\infty$, ${-\lambda_0}$ is a simple eigenvalue and $\psi_0>0$.\\
(b) $R_\lambda^\phi$ is a symmetric Hilbert-Schmidt integral operator on $L^2(m)$. There is a jointly continuous symmetric kernel $G_\lambda^\phi:\R^2\to [0,\infty)$ such that 
$$R_\lambda^\phi h(x)=\int G_\lambda^\phi(x,y)h(y)\,dm(y),$$
and 
$$G_\lambda^\phi(x,y)=\sum_{n=0}^\infty \frac{1}{\lambda+\lambda_n}\psi_n(x)\psi_n(y),$$
where the series converges in $L^2(m\times m)$ and uniformly absolutely on compacts. \\
(c) The killed diffusion $Y^\phi$ has a jointly (in $(t,x,y)$) continuous transition density, $q^\phi(t,x,y)\equiv q(t,x,y)$, for $t>0$, given by 
$$q(t,x,y)=\sum_{n=0}^\infty e^{-\lambda_n t}\psi_n(x)\psi_n(y),$$
where the convergence is in $\L^2(m\times m)$ and uniformly absolutely 
for $(t,x,y)\in [\vep,\infty)\times[\vep,\vep^{-1}]^2$ for any $\vep>0$. Moreover, if $0<\delta<1/4$, and $s^*=s^*(\delta)>0$ satisfies
\begin{equation}\label{s*def}2\delta=\frac{e^{-s^*/2}-e^{-s^*}}{1-e^{-s^*}}
\end{equation}
($s^*$ will increase to $\infty$ as $\delta\downarrow 0$), then there is a $c(\delta)$ such that
\begin{equation}\label{qbound1}
q(t,x,y)\le c(\delta)e^{-\lambda_0 t}\exp(\delta(x^2+y^2))\text{ for all }t\ge s^*(\delta).
 \end{equation}
(d) If $\theta=\int\psi_0\,dm$, then for any $\delta>0$ there is a $c_\delta>0$ such that for all $t\ge0$ and $x\in\R$,
\begin{equation}\label{rhotail}
e^{\lambda_0 t}P_x(\rho_\phi>t)=\theta\psi_0(x)+r(t,x),
\end{equation}
where
\begin{equation}\label {rbound}
|r(t,x)|\le c_\delta e^{\delta x^2}e^{-(\lambda_1-\lambda_0)t},
\end{equation}
\begin{equation}\label{psi0bnd}
\psi_0(x)\le c_\delta e^{\delta x^2},
\end{equation}
and for $t\ge s^*(\delta)$, 
\begin{equation}\label{rboundnew}
|r(t,x)|\le \sum_1^\infty e^{-(\lambda_n-\lambda_0)t}|\psi_n(x)|\int|\psi_n|dm\le c_\delta e^{\delta x^2}e^{-(\lambda_1-\lambda_0)t}.
\end{equation}

(e) As $T\to\infty$, $P_x(Y\in\cdot|\rho_\phi>T)\rightarrow P_x^\infty$ weakly on $C([0,\infty),\R)$ where $P_x^\infty$ is the law of the diffusion with transition density (with respect to $m$),
\begin{equation}\label{conddiff}\tilde q(t,x,y)\equiv q(t,x,y)\frac{\psi_0(y)}{\psi_0(x)}e^{\lambda_0 t}.
\end{equation}
\end{theorem}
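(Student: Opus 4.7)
The plan is to first establish the spectral theory of $L^\phi$ on $L^2(m)$ through compactness of the resolvent, then pass from the spectral data to the killed transition density via a Mehler-formula-plus-semigroup-norm argument, and finally deduce the asymptotics of $P_x(\rho_\phi > t)$ and recognize the limit in (e) as a Doob $h$-transform with $h=\psi_0$. For part (a), I would show that $P_t^\phi$ is a symmetric positivity-preserving contraction semigroup on $L^2(m)$: symmetry comes from reversibility of $Y$ under $m$ together with the fact that $\phi$ acts by multiplication, and the OU transition density $p(t,x,y)$ relative to $m$ (explicit from Mehler's formula) is in $L^2(m\times m)$ for each $t>0$. Since killing only removes mass, $q\le p$ and $P_t^\phi$ is Hilbert--Schmidt, so $R_\lambda^\phi$ is compact. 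The spectral theorem then yields a complete orthonormal eigenbasis $\{\psi_n\}$ with eigenvalues $(\lambda+\lambda_n)^{-1}$ for $R_\lambda^\phi$; by Lemma~\ref{gens} these are eigenfunctions of $L^\phi$ with eigenvalues $-\lambda_n$, and elliptic regularity for the resulting ODE with continuous coefficients gives $\psi_n \in C^2$. Because $P_t^\phi$ is positivity-improving (the OU bridge has a positive density and $\phi$ is locally integrable along paths), a standard Krein--Rutman argument forces simplicity of $-\lambda_0$ and positivity of $\psi_0$.

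The expansions in (b) and (c) follow from the spectral theorem, with Mercer's theorem providing absolute uniform convergence on compacts. The main analytic step is the pointwise bound \eqref{qbound1}, which I would obtain via self-adjointness and Cauchy--Schwarz. For $t\ge s^*$ decompose $t = s^*/2 + (t-s^*) + s^*/2$, so that
\[
q(t,x,y) = \langle q(s^*/2,x,\cdot),\ P^\phi_{t-s^*} q(s^*/2,\cdot,y)\rangle_{L^2(m)} \le e^{-\lambda_0(t-s^*)}\, q(s^*,x,x)^{1/2}\, q(s^*,y,y)^{1/2}.
\]
This uses that $P^\phi_{t-s^*}$ has $L^2(m)$-operator norm at most $e^{-\lambda_0(t-s^*)}$ (immediate from the spectral expansion) together with the symmetric semigroup identity $\|q(s^*/2,x,\cdot)\|_{L^2(m)}^2 = q(s^*,x,x)$. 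The domination $q(s^*,x,x) \le p(s^*,x,x)$ combined with Mehler's formula yields $p(s,x,x) \le C(1-e^{-s})^{-1/2}\exp\!\left(\frac{e^{-s/2}}{1+e^{-s/2}}x^2\right)$, and \eqref{s*def} was chosen precisely so that $\frac{e^{-s^*/2}}{1+e^{-s^*/2}} = 2\delta$, hence $q(s^*,x,x)^{1/2} \le C'\exp(\delta x^2)$. Absorbing $e^{\lambda_0 s^*}$ into the constant yields \eqref{qbound1}. Joint continuity of $q$ then comes from the same Cauchy--Schwarz bound applied with a smaller time split, which shows that the eigenfunction series converges absolutely and uniformly on compacts.

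Part (d) is an integration of (c): $P_x(\rho_\phi > t) = \sum_n e^{-\lambda_n t}\psi_n(x)\int\psi_n\,dm$, with the $n=0$ term giving $e^{-\lambda_0 t}\theta\psi_0(x)$; bound \eqref{psi0bnd} follows from $\psi_0(x)^2 \le e^{\lambda_0 s^*}q(s^*,x,x)$ combined with the diagonal estimate above; \eqref{rboundnew} comes from Cauchy--Schwarz on the tail series using $\sum_n(\int\psi_n\,dm)^2 = \|1\|_{L^2(m)}^2 = 1$ and the $e^{-(\lambda_1-\lambda_0)t}$ factor separated out; and \eqref{rbound} for all $t\ge 0$ is obtained by absorbing small-$t$ behaviour into $c_\delta$. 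For (e), the Markov property gives for $0<t_1<\cdots<t_k<T$ and bounded continuous $f_i$,
\[
E_x\!\left[\prod_i f_i(Y_{t_i}) \,\Big|\, \rho_\phi > T\right] = \frac{E_x\!\left[\prod_i f_i(Y_{t_i})\, P_{Y_{t_k}}(\rho_\phi > T-t_k);\ \rho_\phi > t_k\right]}{P_x(\rho_\phi > T)}.
\]
Multiplying numerator and denominator by $e^{\lambda_0 T}$ and applying (d), with \eqref{rbound} and \eqref{psi0bnd} providing the uniform control needed for dominated convergence, the ratio converges to $\psi_0(x)^{-1}e^{\lambda_0 t_k}E_x\!\left[\prod_i f_i(Y_{t_i})\psi_0(Y_{t_k});\ \rho_\phi > t_k\right]$, which is the finite-dimensional distribution of the Doob $\psi_0$-transform of $Y^\phi$ and has transition density $\tilde q$. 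Tightness on $C([0,\infty),\R)$ follows because on each compact time interval the conditional law is absolutely continuous with respect to the unconditioned $P_x$ with Radon--Nikodym derivative bounded by the same ratio (at $k=1$), and the OU law itself is tight. The main obstacle throughout is the Gaussian pointwise bound \eqref{qbound1}: the spectral theorem gives only $L^2(m)$ information, but parts (d) and (e) need uniform-in-$x$ control with explicit Gaussian tails, and the calibration of $s^*(\delta)$ in \eqref{s*def} is precisely the device that converts $L^2$-spectral decay into the pointwise Gaussian control needed.
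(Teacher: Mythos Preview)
Your proposal is essentially correct and tracks the paper's argument closely, with one genuine methodological difference in (a) and one imprecise step in (e).

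For (a), the paper conjugates $-L^\phi$ by the isometry $Tf(x)=e^{-x^2/4}(2\pi)^{-1/4}f(x)$ to the Schr\"odinger operator $-\tfrac12 g''+(\tfrac{x^2}{8}-\tfrac14+\phi)g$ on $L^2(dx)$ and invokes classical Sturm--Liouville theory; your direct route via Hilbert--Schmidt compactness of $P_t^\phi$ (from $q\le q^0\in L^2(m\times m)$) and a Krein--Rutman argument is a clean alternative that avoids the transformation. For (b)--(d) the two arguments coincide: your semigroup splitting $t=s^*/2+(t-s^*)+s^*/2$ with $\|P^\phi_{t-s^*}\|\le e^{-\lambda_0(t-s^*)}$ is exactly the operator-theoretic reading of the paper's Cauchy--Schwarz on the eigenfunction series, and both reduce \eqref{qbound1} to the Mehler diagonal $q^0(s^*,x,x)=(1-e^{-s^*})^{-1/2}\exp(2\delta x^2)$ via the calibration \eqref{s*def}. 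One technical point you gloss over: Mercer requires a continuous kernel a priori, and the paper devotes a separate lemma to constructing a jointly continuous version of $G_\lambda^\phi$ via the resolvent identity $R_\lambda-R_\lambda^\phi=R_\lambda^\phi(\phi R_\lambda)$; you can shortcut this by first getting a continuous version of $q$ from Feynman--Kac (since $\phi$ is continuous) and then integrating.

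For (e), your tightness argument has a small gap. The Radon--Nikodym derivative of $P_x(\,\cdot\mid\rho_\phi>T)$ with respect to $P_x$ on $\mathcal{F}_{t_0}$ is $\mathbf{1}(\rho_\phi>t_0)P_{Y_{t_0}}(\rho_\phi>T-t_0)/P_x(\rho_\phi>T)$, which by (d) is dominated for large $T$ by $C(x,t_0)\exp(\delta Y_{t_0}^2)$; this is not pointwise bounded as you assert. It is, however, bounded in $L^p(P_x)$ uniformly in $T$ for $\delta$ small, and that does suffice to transfer tightness (H\"older gives $Q_T(K^c)\le \|Z_T\|_p P_x(K^c)^{1/q}$). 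The paper instead bounds $E_x((Y_t-Y_s)^4\mid\rho_\phi>T)\le c(x,t_0)(t-s)^2$ directly via the same Radon--Nikodym structure and Cauchy--Schwarz, then appeals to Kolmogorov's criterion.
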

\begin{proof} See Appendix~\ref{efunctionexpa}.
\end{proof}

\section {A Nonlinear Differential Equation and Some Associated Eigenvalues}\label{FSec}
\setcounter{equation}{0}
Recall that $F(x)=V^\infty(1,x)$. We start by recording the convergence results from Theorems 1 and 2 of \cite{KP85} which were discussed in Section~\ref{intro}. Part (b) in fact is immediate from (a) and \eqref{pdescale}.
\begin{prop}\label{kamin}
(a) $\lim_{\lambda\to\infty}V^\lambda(t,x)=V^\infty(t,x)$, where the convergence is uniform on compact subsets of $S$.\\
(b) For any $\lambda>0$ and $a>0$, 
$$\lim_{t\to \infty} \sup_{|x|\le a}|tV^\lambda(t,xt^{1/2})-F(x)|=0.$$
\end{prop}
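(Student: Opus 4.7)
My plan is as follows. Part (b) is a formal consequence of (a) and the scaling relation \eqref{pdescale}. Setting $\mu=\lambda r$ in \eqref{pdescale} and calling the scaling factor $\beta$, we can rewrite it as $V^\mu(s,x)=\beta^{2}V^{\mu/\beta}(\beta^{2}s,\beta x)$. Taking $\mu=\lambda$, $\beta=t^{-1/2}$, $s=t$ and evaluating at the spatial point $xt^{1/2}$ gives
\[
tV^\lambda(t,xt^{1/2})=V^{\lambda t^{1/2}}(1,x).
\]
Since $\lambda t^{1/2}\to\infty$ as $t\to\infty$ for fixed $\lambda>0$, the uniform convergence in (b) over $|x|\le a$ is exactly (a) applied at $t=1$ on $[-a,a]$ along the sequence $\lambda t^{1/2}$.

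For (a) I would apply Dini's theorem to the monotone family $\{V^\lambda\}_{\lambda>0}$ on $S:=[0,\infty)\times\R\setminus\{(0,0)\}$. Monotonicity follows from \eqref{LLE}: since $\lambda\mapsto 1-e^{-\lambda X(t,0)}$ increases to $\1\{X(t,0)>0\}$, bounded convergence gives
\[
V^\lambda(t,x)=-\log E_{\delta_x}\!\bigl(e^{-\lambda X(t,0)}\bigr)\,\uparrow\,-\log P_{\delta_x}(X(t,0)=0)=V^\infty(t,x)
\]
pointwise on $S$. Each $V^\lambda$ is continuous on $S$ by the $C^{1,2}$ regularity noted after \eqref{Vlambdapde}, together with $V^\lambda(0,x)=0$ for $x\ne 0$. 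It then suffices, by Dini, to prove that $V^\infty$ is continuous on $S$.

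The substantive step is therefore continuity of $V^\infty$ on $S$. On the interior $(0,\infty)\times\R$ this is immediate from \eqref{VF}, $V^\infty(s,x)=s^{-1}F(xs^{-1/2})$, and the continuity of $F$ (part of the ODE analysis of $F$ in the rest of this section). The main obstacle is the initial-time boundary $\{0\}\times(\R\setminus\{0\})$: at $(0,x_0)$ with $x_0\ne 0$ we have $V^\infty(0,x_0)=0$, and for $(s_n,x_n)\to(0,x_0)$ the identity \eqref{VF} combined with the substitution $y_n:=x_n/\sqrt{s_n}\to\infty$ yields $V^\infty(s_n,x_n)=y_n^{2}F(y_n)/x_n^{2}$, so the required convergence $V^\infty(s_n,x_n)\to 0$ reduces to quadratic-or-faster decay of $F$ at infinity. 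The mere vanishing of $F$ at infinity noted in the introduction is not enough; the needed quantitative (Gaussian-type) decay is the key ingredient supplied by the ODE characterization of $F$ below (cf.\ \cite{KP85}). Modulo that input, the whole proposition reduces to the Dini-plus-scaling scheme above.
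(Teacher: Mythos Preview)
Your proposal is correct and aligns with the paper's treatment: the paper records this proposition from \cite{KP85} without proof, noting only that (b) is immediate from (a) and the scaling \eqref{pdescale} (your argument exactly) and, in the introduction, that (a) follows by Dini's theorem. Your identification of the continuity of $V^\infty$ at $\{0\}\times(\R\setminus\{0\})$ as the substantive step---requiring $y^2F(y)\to 0$, which is supplied by the asymptotic \eqref{ode}(iii)---is precisely the detail the paper leaves to the cited reference.
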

In the pde literature $F:\R\to[0,\infty)$ is characterized as the unique solution of the following differential equation:
\begin{eqnarray}
\nonumber&(i)& \frac{F''(y)}{2}+\frac{y}{2}F'(y)+F(y)-\frac{F^2(y)}{2}=0,\\
\label{ode}&(ii)& F>0, F\text{ is } C^2\text{ on }\R,\\
\nonumber&(iii)&F'(0)=0, F\sim c_0 y e^{-y^2/2}\text{ as }y\to\infty.
\end{eqnarray}
Here $\sim$ means the ratio goes to one as $y\to\infty$ and $c_0>0$.  
This result follows from \cite{BPT85}, with $f:[0,\infty)\to[0,\infty)$ satisfying equations (1.7)-(1.9) of that reference (with $N=1$ and $p=2$ in our setting), where $F(y)=2f(\sqrt 2 y)$ for $y\ge 0$ and we extend $F$ to the line by symmetry.   The above ode is then immediate from the Theorem following (1.9) in \cite{BPT85} and the trivial fact that the condition $F'(0)=0$ and fact that $F$ is $C^2$ on the positive half-line ensures the symmetric extension is $C^2$ on the line. In fact uniqueness holds if the strong asymptotic  in condition (iii) is replaced with
\begin{eqnarray*}&(iii)'& F'(0)=0,\quad \lim_{y\to\infty}y^2F(y)=0.
\end{eqnarray*}
Here are some additional properties of $F$. 
\begin{lemma}\label{Fprop} (a) For all $y\ge x_0\ge 0$,
$$F'(y)=\exp\{-y^2/2+x_0^2/2\}F'(x_0)+\int_{x_0}^y \exp\{-y^2/2+z^2/2\}F(z)(F(z)-2)\,dz.$$
(b) $\lim_{y\to\infty}e^{y^2/2}\frac{F'(y)}{y^2}=-c_0$, where $c_0$ is as in \eqref{ode}(iii).\\
(c) $1<F(0)<2$.\\
(d) $F$ is strictly decreasing on $[0,\infty)$. 
\end{lemma}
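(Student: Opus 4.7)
The plan is to treat (a) by the integrating-factor trick and then leverage it throughout. Rewriting \eqref{ode}(i) as $F''+yF'=F(F-2)$ and multiplying by the integrating factor $e^{y^2/2}$ turns the left side into $(e^{y^2/2}F')'$. Integrating from $x_0$ to $y$ and dividing through by $e^{y^2/2}$ gives exactly the stated formula in (a). With (a) in hand, my strategy for the rest is to first pin down the global bound $F<2$, then deduce monotonicity (d) from (a), then close the loop on (c) using an integrated identity, and finally extract the asymptotic in (b) from (a) plus \eqref{ode}(iii).

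For the bound $F<2$, I would start from the probabilistic estimate $V^\infty(t,x)\le 2/t$ (the bound \eqref{Vinftybound}), which at $t=1$ yields $F\le 2$ on $\R$. To upgrade this to strict inequality, suppose $F(y_0)=2$ at some $y_0$. Then $y_0$ is a global maximum, so $F'(y_0)=0$, and the ODE forces $F''(y_0)=-y_0F'(y_0)+F(y_0)^2-2F(y_0)=0$. Since $\tilde F\equiv 2$ is itself a solution of the second-order ODE with the same data $(F(y_0),F'(y_0))=(2,0)$, and the right-hand side is polynomial (hence locally Lipschitz) in $(F,F')$, the standard uniqueness theorem for second-order IVPs forces $F\equiv 2$, contradicting $F(y)\to 0$. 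Hence $F<2$ everywhere.

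For (d), apply (a) with $x_0=0$, using $F'(0)=0$, to get
\[F'(y)=e^{-y^2/2}\int_0^y e^{z^2/2}F(z)(F(z)-2)\,dz.\]
Since $0<F(z)<2$ on $\R$, the integrand is strictly negative on $(0,\infty)$, so $F'(y)<0$ for every $y>0$, proving strict monotonicity on $[0,\infty)$. For the lower bound $F(0)>1$ in (c), I would integrate $F''+yF'=F^2-2F$ over $[0,\infty)$. The first term gives $F'(\infty)-F'(0)=0$, integration by parts yields $\int_0^\infty yF'\,dy=-\int_0^\infty F\,dy$ (the boundary terms $yF(y),F'(y)$ vanish at infinity by the Gaussian tail in \eqref{ode}(iii)), producing the identity $\int_0^\infty F=\int_0^\infty F^2$. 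Now by (d), $F(y)<F(0)$ for every $y>0$; if $F(0)\le 1$, then $F<1$ strictly on $(0,\infty)$, so $F^2<F$ strictly there, contradicting the identity. Hence $F(0)>1$, and combined with $F<2$ this establishes (c).

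For (b), substitute the asymptotic $F(z)\sim c_0 z e^{-z^2/2}$ and $F(z)-2\to-2$ from \eqref{ode}(iii) into the integrand in (a):
\[e^{z^2/2}F(z)(F(z)-2)\sim -2c_0 z\quad\text{as }z\to\infty,\]
so that $\int_{x_0}^y e^{z^2/2}F(z)(F(z)-2)\,dz\sim -c_0 y^2$ as $y\to\infty$, while the remaining term $e^{x_0^2/2}F'(x_0)$ in (a) is a bounded constant. Hence $e^{y^2/2}F'(y)/y^2\to -c_0$, proving (b). The main obstacle I anticipate is the strict global bound $F<2$ in the second step; once this is in place, (d), (c), and (b) all follow by direct manipulation of the integral representation in (a) together with the asymptotics provided by \eqref{ode}(iii).
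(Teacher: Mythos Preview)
Your proof is correct and follows essentially the same route as the paper: the integrating-factor identity for (a), the deduction of (d) from (a) with $x_0=0$, the integral identity $\int F=\int F^2$ for the lower bound in (c), and the asymptotic substitution for (b) are all handled the same way. The one genuine difference is how you obtain the global bound $F<2$: the paper argues purely from the ODE (if $F(0)>2$ then (a) forces $F'>0$ as long as $F>2$, so $F$ could never descend to $0$; the case $F(0)=2$ is dispatched by ODE uniqueness, and then (d) keeps $F$ below $2$ thereafter), whereas you import the probabilistic bound \eqref{Vinftybound} to get $F\le 2$ globally and then invoke ODE uniqueness at a hypothetical interior maximum. Your route is shorter and perfectly legitimate given the logical order of the paper; the paper's route has the mild advantage of being self-contained within the ODE framework. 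One small remark: when you assert $F'(\infty)=0$ in the derivation of $\int F=\int F^2$, this is not literally part of \eqref{ode}(iii); it follows, as you implicitly use later, by combining (a) with the tail asymptotic of $F$ to see $|e^{y^2/2}F'(y)|=O(y^2)$, hence $F'(y)\to 0$.
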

\begin{proof} The differential equation \eqref{ode}(i) may be rewritten as $$(e^{z^2/2}F'(z))'=e^{z^2/2}F(z)(F(z)-2).$$ (a) follows easily. To derive (b), take $x_0$ large and then use the asymptotics from \eqref{ode}(iii).
For (c), note by (a) with $x_0=0$, $F$ is increasing until $F\le 2$. So if $F(0)>2$, it can never pass below $2$, a contradiction.  If $F(0)=2$, then by uniqueness to the initial value problem, $F\equiv2$, another contradiction.  
It now follows from (a) with $x_0=0$ that $F'<0$ for positive values until $F$ hits $2$ but evidently this can therefore never happen. This proves (d).  It remains to prove $F(0)>1$.  A simple calculation using \eqref{ode}(i) gives $(yF+F')'=F(F-1)$. Integrating over the line we get $\int_{\R} F^2\,dy=\int_{\R} F\,dy$.  If $F(0)\le1$, then by (d), $0<F<1$ on $(0,\infty)$ which contradicts the above equality of integrals. (Note that the Remark prior to Lemma 11 in \cite{BPT85} gives $F(0)\ge 1$.)
\end{proof}

In the pde literature $V^\infty(t,x)$ given by \eqref{VF} is called a very singular solution 
of the heat equation with absorption.  One can easily check (or see Section 1 of \cite{BPT85}) that $V=V^\infty$ 
is a $C^{1,2}$ (on $S=[0,\infty)\times\R-\{(0,0)\}$) solution of
\begin{eqnarray}  \label{pde1}&(i)& \frac{\partial V}{\partial t}=\frac{1}{2}\frac{\partial^2 V}{\partial x^2}-\frac{1}{2}V^2\\
\nonumber&(ii)& V(0,x)=0\text{ for all }x\neq 0; \ \lim_{t\to 0}\int_{\R} V(t,x)\,dx=\infty.
\end{eqnarray}

 Recall that  $X_t(dx)=X(t,x)dx$, where $X$ solves \eqref{SPDE}.
Translation invariance and \eqref{LLEb} imply that 
\begin{equation}\label{LLEe}E^X_{X_0}(e^{-\lambda X(t,x)})=\exp\Bigl(-\int V^\lambda(t,y-x)\,dX_0(y)\Bigr).
\end{equation}
We let $\N_x$ denote the canonical measure associated with $X$ starting at $\delta_x$ (see Section II.7 of \cite{per02}).  It is an easy consequence of Theorem~II.7.2 of the latter reference that 
\begin{equation}\label{can} 
\exp\Bigl(-\int 1-e^{-\lambda X(t,0)}d\N_x\Bigr)=e^{-V^\lambda(t,x)}.
\end{equation}
\begin{prop}\label{superF} For all $x\in \R$, 
$e^{-F(x)}=P_{\delta_x}(X(1,0)=0)$ and \hfil\break
$F(x)=\N_x(X(1,0)>0)$.
\end{prop}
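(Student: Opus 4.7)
The proposition is essentially a direct consequence of facts already assembled in the excerpt, so my plan is to unpack the definitions and take limits in the two Laplace-transform formulas that have been recorded.

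For the first identity, I would simply combine the definition $F(x)=V^\infty(1,x)$ (stated at the top of Section~\ref{FSec}) with the identity \eqref{Vinfform}, namely $P_{\delta_x}(X(t,0)=0)=e^{-V^\infty(t,x)}$, specialized at $t=1$. This yields $P_{\delta_x}(X(1,0)=0)=e^{-V^\infty(1,x)}=e^{-F(x)}$, and there is nothing more to do.

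For the second identity, I would start from the canonical measure Laplace formula \eqref{can}, i.e.\ $\exp\Bigl(-\int(1-e^{-\lambda X(t,0)})\,d\N_x\Bigr)=e^{-V^\lambda(t,x)}$, take logarithms to get $\int(1-e^{-\lambda X(t,0)})\,d\N_x=V^\lambda(t,x)$, and then let $\lambda\uparrow\infty$. On the left, monotone convergence (applied to $\N_x$-integrable integrands increasing to $\mathbf{1}\{X(t,0)>0\}$, which is $\N_x$-integrable because its $\N_x$-measure is the finite quantity $V^\infty(t,x)\le 2/t$ by \eqref{Vinftybound}) gives $\N_x(X(t,0)>0)$. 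On the right, the monotone convergence $V^\lambda(t,x)\uparrow V^\infty(t,x)$ established in Section~\ref{intro} (just after \eqref{LLEb}) yields $V^\infty(t,x)$. Setting $t=1$ produces $\N_x(X(1,0)>0)=V^\infty(1,x)=F(x)$.

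Since the two-line derivation above uses only facts already proved or cited, I do not anticipate any real obstacle; the only point that requires a moment of care is justifying the monotone convergence on the $\N_x$ side, which is where the finiteness bound \eqref{Vinftybound} is needed to ensure the limit $\N_x(X(t,0)>0)$ is finite and equals the limit of the integrals. This is why one cites \eqref{Vinftybound} rather than just appealing to $\N_x$ being a $\sigma$-finite measure.
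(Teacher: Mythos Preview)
Your proposal is correct and follows essentially the same route as the paper: the first identity is read off from \eqref{Vinfform} at $t=1$, and the second is obtained by taking logarithms in \eqref{can} and letting $\lambda\uparrow\infty$. One small remark: the monotone convergence theorem applies to the nonnegative integrands $1-e^{-\lambda X(1,0)}$ without any integrability hypothesis on the limit, so you do not actually need to invoke \eqref{Vinftybound} to justify the passage to the limit (though it does confirm the resulting value is finite).
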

\begin{proof} The first equality is immediate from \eqref{Vinfform}. 
Let $t=1$ and $\lambda\uparrow\infty$ in \eqref{can} to derive the second. \end{proof}
We recall the general exponential duality which underlies the above  (see, e.g., Theorem II.5.11 of \cite{per02}).  
For $\phi$ non-negative, bounded and measurable, let $V(t,x)=V(\phi)(t,x)$ be the unique mild solution of 
\begin{equation}\label{semipde}
\frac{\partial V}{\partial t}=\frac{1}{2}\frac{\partial^2 V}{\partial x^2}-\frac{1}{2}V^2,\quad V_0=\phi.
\end{equation}
If $X_t(\phi)=\int\phi dX_t$, then,
\begin{equation}\label{LLEc}
E^X_{X_0}(\exp(-X_t(\phi))=\exp(-X_0(V_t(\phi)).
\end{equation}
If $\phi\in C^2_b(\R)$ (functions with continuous bounded partials of order less than $2$) then $V(t,x)$ has continuous bounded derivatives of order up to $1$ in $t$ and $2$ in $x$, and \eqref{semipde} holds in the classical (i.e., pointwise) sense.  

Now return to the eigenfunction expansions of Section~\ref{sec:efn} in
the case where $\phi=F$ or $F/2$.  We denote dependence on $\phi$ by $\lambda_0(\phi)$ and $\psi_0^\phi$, and $\langle\cdot,\cdot\rangle$ is the inner product in $L^2(m)$. 

\begin{prop}\label{lam0} (a) $\lambda_0(F/2)=\frac{1}{2}$ and the corresponding eigenfunction is $\psi^{F/2}_0(x)=c_F e^{x^2/2}F(x)$, where $c_F>0$ is a normalizing constant.\\
(b) $\frac{1}{2}<\lambda_0(F)<\1$.  More precisely,
$$\frac{1}{2}+\frac{1}{2}\int F(x)(\psi_0^F)^2(x)dm(x)\le \lambda_0(F)\le 1-\frac{1}{2}\int (c_Fe^{x^2/2}F(x))'(x)^2\,dm(x).$$
\end{prop}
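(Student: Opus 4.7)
Part (a) reduces to a direct verification that $\psi(x):=e^{x^2/2}F(x)$ solves $L^{F/2}\psi=-\tfrac{1}{2}\psi$. Using $\psi'=e^{x^2/2}(xF+F')$ and $\psi''=e^{x^2/2}(x^2F+2xF'+F+F'')$ one computes
\[
L\psi=\tfrac{1}{2}\psi''-\tfrac{x}{2}\psi'=\tfrac{1}{2}e^{x^2/2}\bigl(xF'+F+F''\bigr),
\]
which, after substituting $F''=-xF'-2F+F^2$ from \eqref{ode}(i), collapses to $L\psi=\tfrac{1}{2}e^{x^2/2}F(F-1)$; subtracting $(F/2)\psi$ then gives $L^{F/2}\psi=-\tfrac{1}{2}\psi$. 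The decay $F(y)\sim c_0 y e^{-y^2/2}$ from \eqref{ode}(iii) makes $\psi$ and $\psi'$ square-integrable against $m$, so $\psi\in D$. Because $\psi>0$, the simplicity of $-\lambda_0(F/2)$ and positivity of $\psi_0^{F/2}$ asserted in Theorem~\ref{fens}(a) force $\psi_0^{F/2}$ to be a positive scalar multiple of $\psi$; choosing $c_F>0$ so that $\|\psi_0^{F/2}\|_{L^2(m)}=1$ then gives $\psi_0^{F/2}=c_F e^{x^2/2}F$ and $\lambda_0(F/2)=\tfrac{1}{2}$.

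For (b) I use the Rayleigh-quotient characterisation of the principal eigenvalue. Integration by parts against $m$ (boundary terms vanish by Gaussian decay) gives $-\langle Lh,h\rangle=\tfrac{1}{2}\int (h')^2\,dm$, and the eigenfunction expansion of Theorem~\ref{fens}(a) then yields
\[
\lambda_0(\phi)=\inf_{\|h\|_{L^2(m)}=1}\Bigl[\tfrac{1}{2}\int (h')^2\,dm+\int\phi h^2\,dm\Bigr],
\]
with equality at $h=\psi_0^\phi$. Applied at $h=\psi_0^F$ this records $\lambda_0(F)=\tfrac{1}{2}\int(\psi_0^F)'^2\,dm+\int F(\psi_0^F)^2\,dm$. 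Plugging the same normalised $h=\psi_0^F$ into the variational formula with $\phi=F/2$ and using $\lambda_0(F/2)=\tfrac{1}{2}$ from (a) gives $\tfrac{1}{2}\le \tfrac{1}{2}\int(\psi_0^F)'^2\,dm+\tfrac{1}{2}\int F(\psi_0^F)^2\,dm$; subtracting from the identity just recorded yields the lower bound $\lambda_0(F)\ge \tfrac{1}{2}+\tfrac{1}{2}\int F(\psi_0^F)^2\,dm$, which is strict since $F,\psi_0^F>0$.

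For the upper bound I use the normalised eigenfunction $h=\psi_0^{F/2}=c_F e^{x^2/2}F$ as test function for $\phi=F$. The eigenvalue identity $\langle -L^{F/2}\psi_0^{F/2},\psi_0^{F/2}\rangle=\tfrac{1}{2}$, rewritten via the Dirichlet form, gives $\int F(\psi_0^{F/2})^2\,dm=1-\int(\psi_0^{F/2})'^2\,dm$; substituting into the Rayleigh bound for $\lambda_0(F)$ yields
\[
\lambda_0(F)\le 1-\tfrac{1}{2}\int\bigl(c_F e^{x^2/2}F(x)\bigr)'^2\,dm,
\]
the claimed upper bound, strict because $c_F e^{x^2/2}F\sim c_F c_0|x|$ as $|x|\to\infty$ by \eqref{ode}(iii), so its derivative is not identically zero. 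The main technical content is the ODE computation in (a); the only slightly delicate point in (b) is the Dirichlet-form identity and square-integrability of the derivatives of the two ground states, but both follow routinely from \eqref{psi0bnd} (with $\delta$ small) and the explicit form of $\psi_0^{F/2}$, so I do not anticipate a real obstacle.
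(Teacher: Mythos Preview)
Your proof is correct and follows essentially the same route as the paper. Part (a) is identical; for part (b) the paper works with the operator decompositions $-L^F=2(-L^{F/2})+L$ and $-L^F=-L^{F/2}+F/2$ directly, whereas you unpack these via the Dirichlet-form identity $-\langle Lh,h\rangle=\tfrac12\int(h')^2\,dm$, but the algebra and the choice of trial functions ($\psi_0^{F/2}$ for the upper bound, $\psi_0^F$ for the lower) are the same. One small remark: your lower-bound computation nominally uses $(\psi_0^F)'\in L^2(m)$, which the paper's operator-level argument sidesteps; this is not a gap (it follows from $\lambda_0(F)<\infty$ and nonnegativity of both summands in your identity for $\lambda_0(F)$), but the paper's formulation avoids even having to mention it.
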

\begin{proof} Let $\psi(x)=e^{x^2/2}F(x)\in C^2\cap L^2(m)$ (the latter
by \eqref{ode}(iii)). Then 
\begin{eqnarray*}L^{F/2}\psi&=&\frac{1}{2}(\psi'e^{-x^2/2})'e^{x^2/2}-\frac{F}{2}\psi\\
&=&e^{x^2/2}\Bigl[\frac{F''}{2}+\frac{xF'}{2}+\frac{F}{2}-\frac{F^2}{2}\Bigr]\\
&=&-e^{x^2/2}\Bigl(\frac{F}{2}\Bigr),
\end{eqnarray*}
the last by \eqref{ode}(i). This shows $\psi\in D\subset D(L)$ and $L^{F/2}\psi=-\frac{1}{2}\psi$.  Recall by Theorem~\ref{fens}(a), the eigenfunction corresponding to the simple eigenvalue $-\lambda_0(F/2)$ is positive, as is $F$. By orthogonality of eigenfunctions corresponding to distinct eigenvalues we must therefore have $\lambda_0(F/2)=\frac{1}{2}$ and $\psi_0^{F/2}=c\psi$, for some normalizing constant $c>0$.  

\noindent(b) The variational characterization of $\lambda_0$ gives
\begin{equation}\label{varprinc}
\lambda_0(F)=\min\{\langle-L^F\psi,\psi\rangle: \psi\in D(L),\ \Vert\psi\Vert_2=1\},
\end{equation}
where the minimum is attained at $\psi=\psi_0^F$.  (The latter is clear and to
see the former one can set $\psi=R^F_\lambda\phi$ and expand $\phi$ in terms of the basis $\psi_n$.)  If we set $\psi=\psi_0^{F/2}$ we therefore get
\begin{eqnarray*}
\lambda_0(F)\le\langle-L^F\psi,\psi\rangle
&=&2\langle -L^{F/2}\psi,\psi\rangle+\langle L\psi,\psi\rangle\\
&=&2\lambda_0(F/2)-\frac{1}{2}\int_{-\infty}^\infty \psi'(x)^2dm\\
&=&1-\frac{1}{2}\Vert\psi'\Vert_2^2<1,
\end{eqnarray*}
where the next  to last equality holds by an integration by parts (Lemma~\ref{Fprop}(b) handles the boundary terms), and the last equality holds by (a). 
Turning next to the lower bound on $\lambda_0(F)$, if $\psi_0=\psi_0^F$, we have, using the variational characterization of $\lambda_0(F/2)$,
\begin{eqnarray*}
\lambda_0(F)=\langle -L^F\psi_0,\psi_0\rangle&=&\langle-L^{F/2}\psi_0,\psi_0\rangle+\frac{1}{2}\int F\psi_0^2\,dm\\
&\ge& \lambda_0(F/2)+\frac{1}{2}\int F\psi_0^2\,dm
=\frac{1}{2}+\frac{1}{2}\int F\psi_0^2\,dm.
\end{eqnarray*}\end{proof}
{\bf Henceforth we will write $\psi_0$, $\lambda_n$ and $\rho$ for $\psi_0^F$, $\lambda_n(F)$ and $\rho_F$, respectively.}

\section{Asymptotics for Super-Brownian Motion at the Boundary of its Support}
\setcounter{equation}{0}
Recall that $X(t,x)$ is the density of super-Brownian motion which solves \eqref{SPDE}.  Define
\begin{equation}\label{Hdef} H_u(x)=uV^1(u,\sqrt ux)=V^{\sqrt u}(1,x)\uparrow V^\infty(1,x)=F(x),\ \text{as $u\uparrow \infty$},
\end{equation}
uniformly on compacts, where we have used \eqref{pdescale} in the second equation and Proposition~\ref{kamin} in the convergence. By \eqref{Vinftybound} one obtains the elementary inequality
\begin{equation}\label{expbnds}e^{-V^\lambda(t,x)}-e^{-V^\infty(t,x)}\le V^\infty(t,x)-V^\lambda(t,x)\le e^{2/t}(e^{-V^\lambda(t,x)}-e^{-V^\infty(t,x)}).
\end{equation}
\noindent{\bf Notation.} We let $p(t,x)=p_t(x)$ denote the standard Brownian density.
\medskip

In the following Lemma recall that $Y$ is the Ornstein-Uhlenbeck process starting at $x$ under $P_x$. 
\begin{lemma}\label{LTequation}
Let $h\ge 0$ be a bounded Borel measurable function on the real line,  let $B$ be a standard Brownian motion starting at $0$ under $P^B_0$, and set $T=\log(\lambda^2t)$.  Then for $\lambda^2t\ge 1$ and any finite initial measure $X_0$,
\begin{align}\label{LTdecomp}
&E^X_{X_0}\left(\int e^{-\lambda X(t,x)} h(x)X(t,x)dx\right)\cr
&=E^B_0\Bigl(\exp\Bigl(-\int_0^1 V^1(u,B(u))\,du\Bigr)E^Y_{B_1}\Bigl(\exp\Bigl(-\int_0^TH_{e^s}(Y_s)\,ds\Bigr)\cr
&\times\int \Bigl[h(w_0+\sqrt tY_T)\exp\Bigl(-\frac{1}{t}\int H_{e^T}(Y_T+\frac{w_0-x_0}{\sqrt t})dX_0(x_0)\Bigr)\Bigr]dX_0(w_0)\Bigr)\Bigr).
\end{align}
\end{lemma}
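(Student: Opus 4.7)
The plan is to evaluate the left-hand side by first computing $E^X_{X_0}(X(t,x)\,e^{-\lambda X(t,x)})$ pointwise in $x$ via Fubini, and then recognizing the $x$-integral as an unconditional Brownian motion / Ornstein--Uhlenbeck expectation after a change of variables. Pointwise, I would differentiate the Laplace-functional identity \eqref{LLEe} in $\lambda$, which (after justifying interchange of $\partial_\lambda$ and the expectation, e.g.\ by monotonicity of $V^\lambda$ in $\lambda$ together with \eqref{Vinftybound} and dominated convergence) gives
\[
E^X_{X_0}(X(t,x)\,e^{-\lambda X(t,x)}) = \exp\Bigl(-\int V^\lambda(t,x_0-x)\,dX_0(x_0)\Bigr) \int W^\lambda(t,x-w_0)\,dX_0(w_0),
\]
where $W^\lambda := \partial_\lambda V^\lambda$. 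Differentiating the PDE \eqref{Vlambdapde} in $\lambda$ shows $W^\lambda$ solves the linear equation $\partial_t W = \tfrac12 W'' - V^\lambda W$ with initial data $\delta_0$, so the Feynman--Kac representation (obtained by mollifying $\delta_0$ and passing to the limit) combined with a bridge time reversal gives
\[
W^\lambda(t,y) = p_t(y)\,E^{B,\,0\to y}_{[0,t]}\Bigl[\exp\Bigl(-\int_0^t V^\lambda(s,B_s)\,ds\Bigr)\Bigr].
\]

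Next I would apply Brownian scaling via \eqref{pdescale}, namely $s \mapsto s/\lambda^2$ and $B \mapsto \lambda B$, to convert $V^\lambda$ into $V^1$. With $T = \log(\lambda^2 t)$ this rewrites $W^\lambda(t,y)$ as $p_t(y)$ times the expectation, over the Brownian bridge $\tilde B$ from $0$ to $\lambda y$ on $[0,e^T]$, of $\exp(-\int_0^{e^T} V^1(v,\tilde B_v)\,dv)$. For the deterministic prefactor one checks directly from the definitions that $V^\lambda(t,z) = t^{-1}\,H_{e^T}(z/\sqrt t)$, so after the substitution $x = w_0 + \sqrt t\,\xi$ and using the symmetry $H_u(-\cdot) = H_u(\cdot)$ (inherited from evenness of $V^1(t,\cdot)$),
\[
\int V^\lambda(t,x_0-x)\,dX_0(x_0) = \frac{1}{t}\int H_{e^T}\!\Bigl(\xi + \tfrac{w_0-x_0}{\sqrt t}\Bigr)\,dX_0(x_0),
\]
which is exactly the last exponential in \eqref{LTdecomp} once $\xi$ is identified with $Y_T$.

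The final step is to repackage the bridge expectation as a BM$+$OU expectation. The substitution $x = w_0 + \sqrt t\,\xi$ gives $\sqrt t\,p_t(\sqrt t\,\xi) = p_1(\xi)$, and $p_1(\xi)\,d\xi$ is precisely the marginal of $Y_T := B^{\ast}_{e^T}/e^{T/2}$ under an unconditional Brownian motion $B^{\ast}$ on $[0,e^T]$ starting from $0$; hence disintegration over this endpoint removes the bridge conditioning, yielding $E^{B^\ast}_0[\cdots g(Y_T)]$. Splitting $B^\ast$ into $B := B^\ast|_{[0,1]}$ (plain BM on $[0,1]$) and the Ornstein--Uhlenbeck time change $Y_s := e^{-s/2} B^\ast_{e^s}$ for $s \in [0,T]$ (which starts at $Y_0 = B_1$), the Feynman--Kac integrand splits as $\int_0^1 V^1(u,B_u)\,du + \int_0^T H_{e^s}(Y_s)\,ds$, via the substitution $v = e^s$ together with $e^s V^1(e^s,e^{s/2}Y_s) = H_{e^s}(Y_s)$. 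Since $E^{B^\ast}_0 = E^B_0\,E^Y_{B_1}$ by independence of the two pieces, Fubini (bringing $\int dX_0(w_0)$ inside) delivers \eqref{LTdecomp}.

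The main obstacle I anticipate is justifying the Feynman--Kac representation of $W^\lambda$ with the singular initial datum $\delta_0$; the cleanest route is to mollify $\delta_0$, apply FK for smooth data, and pass to the limit using the boundedness of $V^\lambda$ on $(0,\infty)\times\R$ (via \eqref{Vinftybound}) and the smoothness of $V^\lambda$ on $\{t>0\}$. A secondary bookkeeping issue is composing the Brownian scaling, bridge time reversal, and OU time change so that endpoints and time intervals match consistently; checking that the marginal of $Y_T$ under OU started at $B_1 \sim N(0,1)$ equals the standard normal $p_1$ is a clean sanity check on the composition, and it is exactly this identification that causes the bridge weight $p_1(\xi)$ from Step~3 to dissolve into the unconditional BM$+$OU law on the right-hand side.
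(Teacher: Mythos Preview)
Your approach is correct and arrives at \eqref{LTdecomp}, but by a genuinely different route than the paper. The paper invokes the Campbell (Palm) measure formula for super-Brownian motion from \cite{DP}: this immediately produces an \emph{unconditioned} Brownian spine $W$ with initial ``law'' $X_0$, together with the Feynman--Kac weight $\exp(-\int_0^t V^\lambda(t-s,W_s-W_t)\,ds)$ and the residual Laplace factor $E^X_{X_0}(e^{-\lambda X(t,W_t)})$; after that, the scaling and OU time-change are the same as yours. You instead differentiate the one-point Laplace functional \eqref{LLEe} in $\lambda$, identify $W^\lambda=\partial_\lambda V^\lambda$ as the fundamental solution of the linearized equation $\partial_t W=\tfrac12 W''-V^\lambda W$, represent it by Feynman--Kac as a Brownian \emph{bridge} expectation, and only then remove the conditioning by disintegrating over the endpoint against $p_t$. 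Both arguments require the same mollification of $\delta_0$ (the paper also approximates $X(t,x)$ by $P'_\vep X_t(x)$ to apply \cite{DP}). The Campbell route is shorter and more structural, since the spine decomposition comes pre-packaged and no bridge ever appears; your PDE route is more self-contained---it does not appeal to the Palm theory of \cite{DP} and makes the role of the linearized equation explicit---at the price of the extra bridge-to-free-BM step and the endpoint bookkeeping you anticipate in your last paragraph.
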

\begin{proof} Let $W_t$ be a Brownian motion starting with initial ``law" $X_0$ under the finite measure $E^W_{X_0}$. Apply the Campbell measure formula for $X_t$, or more specifically use Theorem~4.1.1 and then Theorem~4.1.3 of \cite{DP} with $\beta=1$ and $\gamma=1/2$ to see that
\begin{align}\label{campbell}
&E^X_{X_0}\Bigl(\int e^{-\lambda X(t,x)}h(x)X(t,x)dx\Bigr)\cr
&=E^W_{X_0}\times E^X_{X_0}\Bigl(h(W_t)\exp(-\lambda X(t,W_t))\exp\Bigl(-\int_0^t V^\lambda(t-s,W_s-W_t)\,ds\Bigr)\Bigr).
\end{align}
In the above we approximate $X(t,x)$ by $\int p_\vep(x-y)X_t(dy)$ and let $\vep\downarrow 0$ in order to apply Theorem~4.1.3 in \cite{DP}.  This limiting argument is easy to justify; use \eqref{LLEc} with $\phi^{\vep,\lambda}=\lambda p_\vep$ and the bound $V(\phi^{\lambda,\vep})(t-s,x)\le  \lambda p_{\vep+t-s}(x)\le \lambda(t-s)^{-1/2}$ to take the limit through the Lebesgue integral in $s$. 
Now use \eqref{LLEe} and then the scaling \eqref{pdescale} to conclude that
\begin{align}
&E^X_{X_0}\Bigl(\int e^{-\lambda X(t,x)}h(x)X(t,x)dx\Bigr)\cr
&=E^W_{X_0}\Bigl(h(W_t)\exp\Bigl(-\int_0^t V^\lambda(t-s,W_t-W_s)\,ds-\int V^\lambda(t,W_t-x_0)dX_0(x_0)\Bigr)\Bigr)\cr
\label{LT1}&=E^W_{X_0}\Bigl(h(W_t)\exp\Bigl(-\int_0^t V^1(\lambda^2(t-s),\lambda(W_t-W_s))\lambda^2\,ds\cr
&\phantom{=E_W\Bigl(h(W_t)\exp\Bigl(-}\quad-\lambda^2\int V^1(\lambda^2 t, \lambda(W_t-x_0))dX_0(x_0)\Bigr)\Bigr).
\end{align} 

 If $\hat W_s=W_t-W_{t-s}$ and $B_u=\lambda \hat W_{\lambda^{-2}u}$ for $u\le \lambda^2t$, then under $E^W_{X_0}$ and conditional on $W_0$, $B$ is a Brownian motion starting from $0$.  Noting that $W_t=W_0+\lambda^{-1}B_{\lambda^2t}$, we may rewrite \eqref{LT1} as
\begin{align*}
E^W_{X_0}&\Bigl(h(W_0+\frac{1}{\lambda}B_{\lambda^2t})\exp\Bigl(-\int_0^{t\lambda^2}V^1(u,B_u)\,du\cr
&\phantom{\Bigl(h(W_0+\frac{1}{\lambda}B_{\lambda^2t})\exp\Bigl(}-\lambda^2\int V^1(\lambda^2t,B_{\lambda^2t}+\lambda(W_0-x_0))dX_0(x_0)\Bigr)\Bigr)\cr
=&E^W_{X_0}\Bigl(\exp\Bigl(-\int_0^1 V^1(u,B_u)\,du\Bigr)E^B_{B_1}\Bigl[h(W_0+\lambda^{-1}B_{\lambda^2t-1})\cr
&\times\exp\Bigl(-\int_0^{\lambda^2t-1}V^1(1+u,B_u)\,du\\
&\phantom{\times\exp\Bigl(-\int_0^{\lambda^2t-1}V^1}-\lambda^2\int V^1(\lambda^2t,B_{\lambda^2t-1}+\lambda(W_0-x_0))dX_0(x_0)\Bigr)\Bigr]\Bigr).
\end{align*}
Set $$Y_s=B(e^s-1)e^{-s/2},$$ which is an Ornstein-Uhlenbeck process starting at $B_1$ under $E^B_{B_1}$.  Then \eqref{LT1} equals
\begin{align*}
E^W_{X_0}&\Bigl(\exp\Bigl(-\int_0^1 V^1(u,B_u)\,du\Bigr)E^Y_{B_1}\Bigl[\exp\Bigl(-\int_0^Te^sV^1(e^s,e^{s/2}Y_s)\,ds\Bigr)\cr
&\quad\times h(W_0+\sqrt tY_T)\exp\Bigl(-\int\frac{e^T}{t}V^1(e^T,e^{T/2}Y_T+\frac{e^{T/2}}{\sqrt{t}}(W_0-x_0))dX_0(x_0)\Bigr)\Bigr]\Bigr)\cr
&=E^W_{X_0}\Bigl(\exp\Bigl(-\int_0^1 V^1(u,B_u)\,du\Bigr)E^Y_{B_1}\Bigl[\exp\Bigl(-\int_0^T H_{e^s}(Y_s)ds\Bigr)h(W_0+\sqrt t Y_T)\cr
&\phantom{=E_W\Bigl(\exp\Bigl(-\int_0^1}\times\exp\Bigl(-t^{-1}\int H_{e^T}\Bigl(Y_T+\frac{W_0-x_0}{\sqrt t}\Bigr)\,dX_0(x_0)\Bigr)\Bigr]\Bigr).
\end{align*}
Recalling that $W_0$ is independent of $B$ under $E^W_{X_0}$, we see that the above equals the required expression.
\end{proof}
We will use the following Lemma which will allow us to apply Lemma~\ref{LTequation} to first get a preliminary bound on $V^\infty-V^\lambda$ for large $\lambda$, and then reapply these results to get exact asymptotics.  
\medskip

\begin{lemma} \label{prelgen}Assume for some $r>1$ and $\underline\lambda\ge0$,
\begin{equation*}
E^X_{\delta_0}\Bigl(\int e^{-\lambda X(t,x)}X(t,x)\,dx\Bigr)\le C(t)\lambda^{-r}\ \forall\lambda>\underline\lambda.
\end{equation*}
Then 
\begin{equation*}
\sup_x [V^\infty(t,x)-V^\lambda(t,x)]\le e^{6/t}t^{-1/2}C(t/2)(r-1)^{-1}\lambda^{1-r}\ \forall\lambda> \underline\lambda.
\end{equation*}
\end{lemma}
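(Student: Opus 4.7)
Set $D(t,x) = V^\infty(t,x) - V^\lambda(t,x) \ge 0$. The hypothesis controls an integral over $x$ of a quantity closely related to $D(t,\cdot)$, while the conclusion demands a pointwise (in $x$) bound. The plan is to bridge this gap by smearing $D$ in $x$ using the heat semigroup: the Duhamel form of the PDE \eqref{semipde} will convert a pointwise bound on $D(t,\cdot)$ into an integrated bound on $D(t/2,\cdot)$, and the latter is then exactly what the hypothesis is designed to handle.

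\textbf{Step 1: Duhamel reduction.} On $[t/2,t]\times\R$ both $V^\lambda$ and $V^\infty$ are bounded classical solutions of $\partial_s V = \tfrac12 \partial_{xx}V - \tfrac12 V^2$ (by \eqref{Vinftybound}, $V^\infty \le 4/t$ there), so the mild formula
\[
V(t,x) = P_{t-s}V(s,\cdot)(x) - \tfrac{1}{2}\int_s^t P_{t-u}V(u,\cdot)^2(x)\,du
\]
holds for $t/2 \le s < t$. Subtracting the mild equations for $V^\infty$ and $V^\lambda$ and using
$(V^\infty)^2 - (V^\lambda)^2 = (V^\infty+V^\lambda)\,D \ge 0$ yields the linear contraction
\[
D(t,x) \le P_{t-s}D(s,\cdot)(x).
\]
Choosing $s=t/2$ and bounding the Gaussian density uniformly by $p_{t/2}(\cdot)\le(\pi t)^{-1/2}$ gives
\[
\sup_x D(t,x) \le (\pi t)^{-1/2}\int_\R D(t/2,y)\,dy.
\]

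\textbf{Step 2: Applying the hypothesis.} By \eqref{LLE} and \eqref{Vinfform},
$e^{-V^\lambda(t/2,y)} - e^{-V^\infty(t/2,y)} = E^X_{\delta_0}\bigl(e^{-\lambda X(t/2,y)}; X(t/2,y)>0\bigr)$, and \eqref{expbnds} applied at time $t/2$ gives
\[
D(t/2,y) \le e^{4/t}\,E^X_{\delta_0}\bigl(e^{-\lambda X(t/2,y)}; X(t/2,y)>0\bigr).
\]
Integrating in $y$, the elementary identity $e^{-\lambda a}\,\mathbf{1}_{a>0} = a\int_\lambda^\infty e^{-\mu a}\,d\mu$ together with Tonelli turns the left tail into the Campbell-type integral in the hypothesis: since every $\mu>\lambda$ lies in $(\underline\lambda,\infty)$,
\[
\int_\R D(t/2,y)\,dy \le e^{4/t}\int_\lambda^\infty E^X_{\delta_0}\Bigl(\int X(t/2,y)\,e^{-\mu X(t/2,y)}\,dy\Bigr)d\mu \le \frac{e^{4/t}C(t/2)}{r-1}\,\lambda^{1-r}.
\]
Combining with Step 1 and absorbing $\pi^{-1/2}<1\le e^{2/t}$ into the exponential produces the advertised bound $e^{6/t} t^{-1/2} C(t/2)(r-1)^{-1}\lambda^{1-r}$.

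\textbf{Main obstacle.} Everything hinges on the Duhamel step, and the only real subtlety is avoiding the initial singularity of $V^\lambda$ and $V^\infty$ at $(0,0)$; that is precisely why one must split the time interval strictly before time $0$ (at $s=t/2$), which is also where the factors $C(t/2)$ and $e^{6/t}$ (rather than $e^{2/t}$) in the conclusion come from. Once the mild form is in hand, the remaining manipulation is a short Tonelli/identity calculation and routine heat-kernel bound.
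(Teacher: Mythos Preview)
Your proof is correct and follows the same overall architecture as the paper's: split at time $t/2$, pass from a pointwise bound on $D(t,\cdot)$ to an integrated bound on $D(t/2,\cdot)$ via the heat kernel, and then control the latter using the hypothesis by integrating out $\mu\in[\lambda,\infty)$.

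The one methodological difference worth noting is in Step~1. The paper obtains the heat-kernel smoothing inequality probabilistically, through the Markov property of super-Brownian motion at time $t/2$ together with $e^{-a}-e^{-b}\le b-a$ (and then a second appeal to \eqref{expbnds} to convert back to $D$). You instead subtract the mild/Duhamel forms of the two PDE solutions and use the sign of the nonlinear term $(V^\infty)^2-(V^\lambda)^2\ge 0$ to get $D(t,x)\le P_{t/2}D(t/2,\cdot)(x)$ directly. Your route is slightly cleaner here---it avoids one invocation of \eqref{expbnds}---and is the analytic counterpart of the paper's probabilistic step. In Step~2 your integral identity $e^{-\lambda a}\mathbf{1}_{a>0}=a\int_\lambda^\infty e^{-\mu a}\,d\mu$ with Tonelli is just a repackaging of the paper's differentiate-in-$\lambda$-then-integrate-back argument; the two are formally equivalent.
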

\begin{proof} 
Recalling \eqref{LLE}, we have 
\begin{equation*}
e^{-V^\lambda(t,x)}-e^{-V^\infty(t,x)}=E^X_{\delta_0}\Bigl(e^{-\lambda X(t,x)}1(X(t,x)>0)\Bigr).
\end{equation*}
The lefthand side is Lebesgue integrable in $x$ (e.g., it is bounded by $V^\infty(t,x)$ which is Lebesgue integrable by \eqref{VF} and the asymptotics for $F$) and so
\begin{equation*}
\int e^{-V^\lambda(t,x)}-e^{-V^\infty(t,x)}\,dx=E^X_{\delta_0}\Bigl(\int e^{-\lambda X(t,x)}1(X(t,x)>0)\,dx\Bigr).
\end{equation*}
It is easy to differentiate with respect to $\lambda>0$ through the integrals on the righthand side and so conclude for any $\lambda>\underline\lambda$,
\begin{align}\label{Vderiv}-\frac{d}{d\lambda}\Bigl(\int e^{-V^\lambda(t,x)}-e^{-V^\infty(t,x)}\,dx\Bigr)&=E^X_{\delta_0}\Bigl(\int e^{-\lambda X(t,x)}X(t,x)\,dx\Bigr)\cr
&\le C(t)\lambda^{-r}.
\end{align}
For $\lambda>\underline\lambda$ integrate the above over $[\lambda,\infty)$ and so deduce from \eqref{expbnds} that for $\lambda>\underline\lambda$,
\begin{align}\label{lebintbnd}
\int V^\infty(t,x)-V^\lambda(t,x)\,dx&\le e^{2/t}\int e^{-V^\lambda(t,x)}-e^{-V^\infty(t,x)}\,dx\cr
&\le \frac{e^{2/t}C(t)}{r-1}\lambda^{1-r}.
\end{align}
Next use the Markov property of $X$ to see that for $\lambda>\underline\lambda$, 
\begin{align}\label{Vincrement}
e^{-V^\lambda(t,x)}-e^{-V^\infty(t,x)}&=E^X_{\delta_x}\Bigl(e^{-\lambda X(t,0)}1(X(t,0)>0)\Bigr)\cr
&=E^X_{\delta_x}\Bigl(E^X_{X(t/2)}\Bigl(e^{-\lambda X(t/2,0)}1(X(t/2,0)>0)\Bigl)\Bigl)\cr
&=E^X_{\delta_x}\Bigl(e^{-X_{t/2}(V^\lambda_{t/2})}-e^{-X_{t/2}(V^\infty_{t/2})}\Bigr)\cr
&\le E^X_{\delta_x}(X_{t/2}(V^\infty_{t/2}-V^\lambda_{t/2}))\cr
&=\int p(t/2,y-x)(V^\infty(t/2,y)-V^\lambda(t/2,y))\,dy\cr
&\le t^{-1/2}\frac{e^{4/t}C(t/2)}{r-1}\lambda^{1-r},
\end{align}
where we used \eqref{lebintbnd} in the last line. Finally use \eqref{expbnds} again to
obtain the required bound.  
\end{proof}
 The critical term in \eqref{LTdecomp} is $\exp\Bigl(-\int_0^TH_{e^s}(Y_s)\,ds\Bigr)$.  To estimate its decay rate, we introduce
 \begin{align}
Z_T=&Z_T(Y)=\exp\Bigl(\int_0^T F(Y_s)-H_{e^s}(Y_s)\,ds\Bigr)\cr
\label{Zdef} =&\exp\Bigl(\int_0^TV^\infty(1,Y_s)-V^{e^{s/2}}(1,Y_s)\,ds\Bigr)\uparrow Z_\infty(Y)\le \infty\text{ as }T\to\infty.
\end{align}
Let $\lambda_0=\lambda_0(F)\in(\frac{1}{2},1)$ be as in Proposition~\ref{lam0}. Choose $\vep\in(0,2\lambda_0-1)$ and set 
\begin{equation}\label{del1}\delta_{\eqref{del1}}=2\lambda_0-\vep>1.
\end{equation}

\begin{lemma}\label{prel}(a) For all $t>0$ there is a $C(t)$, non-increasing in $t$, such that $\sup_xV^\infty(t.x)-V^\lambda(t,x)\le C(t)\lambda^{1-\delta_{\eqref{del1}}}$ for all $\lambda>0$.

\noindent(b) There is a constant $C$ so that $Z_\infty-Z_T\le Ce^{-T(\delta_{\eqref{del1}}-1)/2}$ for all $T\ge 0$.  In particular $Z_\infty$ is uniformly bounded by some constant $C_Z$. 
\end{lemma}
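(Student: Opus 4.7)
My strategy for (a) is to verify the hypothesis of Lemma~\ref{prelgen} with $r = \delta_{\eqref{del1}}$. Applying Lemma~\ref{LTequation} with $h \equiv 1$ and $X_0 = \delta_0$ (so $w_0 = x_0 = 0$ in the integrals) and discarding the two factors that are bounded by $1$ gives
\[
E^X_{\delta_0}\Bigl(\int e^{-\lambda X(t,x)}X(t,x)\,dx\Bigr) \le E^B_0 E^Y_{B_1}\Bigl[\exp\Bigl(-\int_0^T H_{e^s}(Y_s)\,ds\Bigr)\Bigr] = E^Y_m\Bigl[\exp\Bigl(-\int_0^T H_{e^s}(Y_s)\,ds\Bigr)\Bigr],
\]
using $B_1 \sim m$ under $P^B_0$. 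Since $e^T = \lambda^2 t$, showing that the right-hand side is $O(e^{-T\delta_{\eqref{del1}}/2})$ uniformly in $T \ge 0$ yields a $\lambda^{-\delta_{\eqref{del1}}}$ bound on the integrand and, via Lemma~\ref{prelgen}, gives (a) for $\lambda \ge t^{-1/2}$. The range $\lambda < t^{-1/2}$ is handled by the trivial estimate $V^\infty - V^\lambda \le V^\infty \le 2/t$, which is already of the desired form with a $C(t)$ that is a negative power of $t$ (hence non-increasing).

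The heart of the argument is a Feynman--Kac comparison with $F$. Using $H_{e^s} \le F$ we split
\[
\exp\Bigl(-\int_0^T H_{e^s}(Y_s)\,ds\Bigr) = \exp\Bigl(-\int_0^T F(Y_s)\,ds\Bigr)\exp\Bigl(\int_0^T (F - H_{e^s})(Y_s)\,ds\Bigr).
\]
The expectation of the first factor is $P_x(\rho > T) \le c_\delta e^{\delta x^2}e^{-\lambda_0 T}$ by Theorem~\ref{fens}(d). For the second, I will show $H_u \to F$ \emph{uniformly} on all of $\R$: Proposition~\ref{kamin}(a) with Dini (using monotonicity in $u$) gives uniform convergence on compacts, and $0 \le F - H_u \le F$ combined with $F(x) \to 0$ as $|x| \to \infty$ controls the tails uniformly in $u$. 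Consequently, given $\eta > 0$ there is $s_0 = s_0(\eta)$ with $\sup_x (F - H_{e^s})(x) \le \eta$ for $s \ge s_0$, while $F - H_{e^s} \le F(0) < 2$ always by Lemma~\ref{Fprop}(c); splitting at $s_0$ produces $\int_0^T (F - H_{e^s})(Y_s)\,ds \le 2s_0 + \eta T$. Choosing $\eta = \vep/2$ so that $\lambda_0 - \eta = \delta_{\eqref{del1}}/2$, taking expectations and integrating against $m(dx)$ (with $\delta < 1/2$ so $\int e^{\delta x^2}\,dm < \infty$) gives $E^Y_m[\exp(-\int_0^T H_{e^s}(Y_s)\,ds)] \le C\, e^{-T\delta_{\eqref{del1}}/2}$, which completes (a).

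For (b), the \emph{deterministic} bound $\sup_{x \in \R}(F(x) - H_{e^s}(x)) \le C\, e^{-s(\delta_{\eqref{del1}}-1)/2}$, valid for all $s \ge 0$, follows from (a) specialized to $t = 1$, $\lambda = e^{s/2}$. Integrating in $s$ gives the pathwise estimate $\int_T^\infty (F - H_{e^s})(Y_s)\,ds \le C'\, e^{-T(\delta_{\eqref{del1}}-1)/2}$ with $C' = 2C/(\delta_{\eqref{del1}}-1)$; taking $T = 0$ shows $Z_\infty \le e^{C'} =: C_Z$ uniformly in $Y$. Writing
\[
Z_\infty - Z_T = Z_T\Bigl(\exp\Bigl(\int_T^\infty (F - H_{e^s})(Y_s)\,ds\Bigr) - 1\Bigr)
\]
and combining $Z_T \le C_Z$ with $e^u - 1 \le u\, e^{C'}$ for $u \le C'$ yields $Z_\infty - Z_T \le C_Z\, e^{C'} C'\, e^{-T(\delta_{\eqref{del1}}-1)/2}$, as required. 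The main delicate step throughout is the uniform convergence $H_u \to F$ on $\R$, but this is resolved cleanly by the Dini-plus-tail argument above; after that everything reduces to direct applications of Theorem~\ref{fens}(d) and Lemma~\ref{prelgen}.
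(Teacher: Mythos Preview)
Your proof is correct and follows essentially the same approach as the paper: bound the key expectation from Lemma~\ref{LTequation} by $e^{(\vep/2)T}P^Y_m(\rho>T)$ using uniform convergence of $H_{e^s}\to F$, invoke Theorem~\ref{fens}(d), and feed this into Lemma~\ref{prelgen}; for (b) both arguments specialise (a) to $t=1$, $\lambda=e^{s/2}$. The only cosmetic difference is that the paper uses the Markov property at time $T_0$ to shift past the transient regime, whereas you bound the correction factor $\exp(\int_0^T(F-H_{e^s}))$ deterministically by splitting the integral at $s_0$; these are equivalent bookkeeping choices.
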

\begin{proof}
By Lemma~\ref{Fprop}(b) and \eqref{Hdef} we may first choose $K$ and then $T_0$ so that 
\begin{equation}\label{Hconv}
\sup_{|x|\ge K} F(x)<\vep/2,\quad \sup_{s\ge T_0}\sup_{|x|\le K}F(x)-H_{e^s}(x)<\vep/2,
\end{equation}
which in turn implies
\begin{equation}\label{unif}
\sup_{s\ge T_0}\sup_xF(x)-H_{e^s}(x)<\vep/2.
\end{equation}
Now take $h\equiv 1$ in Lemma~\ref{LTequation}, recall that $m$ is the invariant law for $Y$ and use the Markov property at $T_0$ to conclude that for $T\equiv\log(\lambda^2 t)\ge T_0$,
\begin{align}
 E^X_{X_0}&\Bigl(\int e^{-\lambda X(t,x)}X(t,x)dx\Bigr)/X_0(1)\cr
&\le E^Y_m\Bigl(\exp\Bigl(-\int_0^TH_{e^s}(Y_s)\,ds\Bigr)\Bigr)\cr
&\le E^Y_m\Bigl(E^Y_{Y_{T_0}}\Bigl(\exp\Bigl(-\int_0^{T-T_0} H_{e^{s+T_0}}(Y_s)\,ds\Bigr)\Bigr)\Bigr)\cr
&\le E^Y_m\Bigl(\exp\Bigl(-\int_0^{T-T_0}F(Y_s)\,ds\Bigr)\Bigr)\exp((\vep/2)(T-T_0))\quad \text{by \eqref{unif}}\cr
\label{rhobound}&\le(t\lambda^2)^{\vep/2}P^Y_m(\rho_F>T-T_0),
\end{align}
where we recall that $\rho_F$ is the lifetime of the killed Ornstein-Uhlenbeck process $Y^F$.  Now use Theorem~\ref{fens}(d) to see that for $\lambda\ge \lambda'(\vep)$, \eqref{rhobound} is at most
\begin{equation}\nonumber
(t\lambda^2)^{\vep/2}e^{-\lambda_0(T-T_0)}[\theta\int\psi_0dm+c' e^{-(\lambda_1-\lambda_0)(T-T_0)}]\le c\cdot(\sqrt t\lambda)^{-\delta_{\eqref{del1}}},
\end{equation}
for some universal constant $c$.  We now may apply Lemma~\ref{prelgen} to conclude that 
\begin{equation*} 
\sup_xV^\infty(t,x)-V^\lambda(t,x)\le C(t)\lambda^{1-\delta_{\eqref{del1}}},
\end{equation*}
first for $\lambda>\lambda(\vep)$, and then for all $\lambda> 0$, the latter
using \eqref{Vinftybound} and by increasing $C(t)$.  It is easy to use the explicit form for the constant in Lemma~\ref{prelgen} to see that we may take $C(t)$ to be non-increasing in $t$.  

Turning next to (b), we have from \eqref{Zdef} and (a), 
$$Z_T\le \exp\Bigl(C(1)\int_0^\infty e^{-(\delta_{\eqref{del1}}-1)s/2}\,ds\Bigr)\equiv c_0.$$
This bound and (a) imply that
\begin{align*}
Z_\infty-Z_T&\le Z_\infty\Bigl[1-\exp\Bigl(-\int_T^\infty V^\infty(1,Y_s)-V^{e^{s/2}}(1,Y_s)\,ds\Bigr)\Bigr]\cr
&\le c_0C(1)\int_T^\infty e^{-(\delta_{\eqref{del1}}-1)s/2}\,ds\cr
&=\frac{c_0C(1)}{\delta_{\eqref{del1}}-1}e^{-(\delta_{\eqref{del1}}-1)T/2}.
\end{align*}
\end{proof}
Recall the law $P_x^\infty$ from Theorem~\ref{fens}(e). We need a slight extension of the latter result. 
\begin{lemma}\label{condlimit}
For any $\phi:\R\to\R$ bounded measurable,
\[\lim_{T\to\infty}E^Y_x(Z_T\phi(Y_T) |\rho>T)=E_x^\infty(Z_\infty)\int\phi\psi_0dm/\theta.\]
\end{lemma}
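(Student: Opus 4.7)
The plan is a two-stage Markov splitting at an intermediate time $T_0$ (eventually taken large), combined with weak convergence and the eigenfunction expansion of Theorem~\ref{fens}. First, by Lemma~\ref{prel}(b), $0\le Z_\infty-Z_T\le Ce^{-(\delta_{\eqref{del1}}-1)T/2}$ uniformly in the path, so boundedness of $\phi$ yields $E^Y_x[|Z_\infty-Z_T||\phi(Y_T)|\mid\rho>T]\to 0$, and it suffices to prove $E^Y_x[Z_\infty\phi(Y_T)\mid\rho>T]\to E^\infty_x(Z_\infty)\int\phi\psi_0\,dm/\theta$.

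Next, for fixed $T_0<T$, I would apply the Markov property of $Y$ at time $T_0$. Writing $Z_\infty(Y)=Z_{T_0}(Y)\exp(\int_{T_0}^\infty (F-H_{e^s})(Y_s)\,ds)$ and shifting the tail integrand gives
\[E^Y_x[Z_\infty\phi(Y_T)1(\rho>T)]=E^Y_x\bigl[Z_{T_0}1(\rho>T_0)\Phi_{T_0,T}(Y_{T_0})\bigr],\]
where $\Phi_{T_0,T}(y):=E^Y_y[\hat Z_{T_0}(Y)\phi(Y_{T-T_0})1(\rho>T-T_0)]$ and $\hat Z_{T_0}(Y):=\exp(\int_0^\infty (F-H_{e^{s+T_0}})(Y_s)\,ds)$. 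From Lemma~\ref{prel}(a), $F-H_{e^u}\le Ce^{(1-\delta_{\eqref{del1}})u/2}$, so $1\le \hat Z_{T_0}\le 1+\eta(T_0)$ with $\eta(T_0)\downarrow 0$ uniformly in the path, hence
\[\Phi_{T_0,T}(y)=E^Y_y[\phi(Y_{T-T_0})1(\rho>T-T_0)]+O\bigl(\eta(T_0)\|\phi\|_\infty P^Y_y(\rho>T-T_0)\bigr).\]

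A variant of Theorem~\ref{fens}(d) with $\phi$ in place of $1$, proved along the same lines using \eqref{qbound1} and Cauchy--Schwarz on the spectral decomposition, gives
\[E^Y_y[\phi(Y_t)1(\rho>t)]=e^{-\lambda_0 t}\bigl[\psi_0(y){\textstyle\int}\phi\psi_0\,dm+r_\phi(t,y)\bigr],\quad |r_\phi(t,y)|\le c_\delta\|\phi\|_\infty e^{\delta y^2}e^{-(\lambda_1-\lambda_0)t},\]
for $t$ large. Dividing by $P^Y_x(\rho>T)=e^{-\lambda_0 T}(\theta\psi_0(x)+r(T,x))$ from Theorem~\ref{fens}(d), and invoking the ground-state $h$-transform identity
\[E^Y_x[Z_{T_0}1(\rho>T_0)\psi_0(Y_{T_0})]=\psi_0(x)e^{-\lambda_0 T_0}E^\infty_x(Z_{T_0}),\]
which comes from integrating the density formula $\tilde q(t,x,y)=q(t,x,y)\psi_0(y)e^{\lambda_0 t}/\psi_0(x)$ of Theorem~\ref{fens}(e) against a cylinder functional, the leading terms collapse to $E^\infty_x(Z_{T_0})\int\phi\psi_0\,dm/\theta$. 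Sending $T\to\infty$ with $T_0$ fixed eliminates the error terms via the spectral gap, and then $T_0\to\infty$ yields the result by monotone convergence, since $Z_{T_0}\uparrow Z_\infty$.

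The main obstacle is the uniform control of the $r_\phi$-remainder, whose $e^{\delta Y_{T_0}^2}$ growth must be integrated against the finite kernel $E^Y_x[Z_{T_0}1(\rho>T_0);\cdot]$; this is handled by choosing $\delta$ small enough that $e^{\delta y^2}$ is integrable against the Gaussian density of $Y_{T_0}$ under $P^Y_x$, together with $Z_{T_0}\le C_Z$ uniformly from Lemma~\ref{prel}(b), so that the factor $e^{-(\lambda_1-\lambda_0)(T-T_0)}$ drives the error to zero as $T\to\infty$. The $\eta(T_0)$-error is controlled by reapplying the Markov property to identify $E^Y_x[Z_{T_0}1(\rho>T_0)P^Y_{Y_{T_0}}(\rho>T-T_0)]$ with $E^Y_x[Z_{T_0}1(\rho>T)]\le C_Z P^Y_x(\rho>T)$, which exactly cancels the normalization.
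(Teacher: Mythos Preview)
Your proof is correct and follows essentially the same route as the paper: freeze the path-functional at an intermediate time $T_0$, apply the Markov property there, use the eigenfunction expansion of $q(T-T_0,\cdot,\cdot)$ to isolate the $\psi_0$-term, control the tail via the spectral gap and the $e^{\delta y^2}$ bound, recognize the leading term as the $P^\infty_x$-expectation via the ground-state transform, and finally send $T_0\to\infty$. The only difference is a small detour: you first replace $Z_T$ by $Z_\infty$ and then factor $Z_\infty=Z_{T_0}\hat Z_{T_0}$, which forces you to control the non-adapted piece $\hat Z_{T_0}$ separately; the paper instead replaces $Z_T$ directly by $Z_{T_1}$ (using $|Z_T-Z_{T_1}|\le Ce^{-T_1(\delta_{\eqref{del1}}-1)/2}$), so that the frozen functional is already $\sigma(Y_s:s\le T_1)$-measurable and the Markov step is cleaner, with no $\hat Z$ correction needed.
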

\begin{proof} If $T>T_1>0$, then the monotonicity of $T\rightarrow Z_T$ and Lemma~\ref{prel}(b) imply 
that 
\begin{equation}\label{TT1bnd}
E^Y_x(|Z_T-Z_{T_1}| |\phi|(Y_T)\,|\rho>T)\le C\Vert\phi\Vert_\infty e^{-T_1(\delta_{\eqref{del1}}-1)/2}.
\end{equation}
So using this and the bound in Lemma~\ref{prel}(b), it clearly suffices to show that 
for each $T_1>0$,
\begin{equation}\label{T1limi}
\lim_{T\to\infty} E^Y_x(Z_{T_1}\phi(Y_T)|\rho>T)=E_x^\infty(Z_{T_1})\int\phi\psi_0dm/\theta.
\end{equation}
By the Markov property and the eigenfunction expansion in Theorem~\ref{fens}(c), the lefthand side of the above is
\begin{align}
\label{efnexp}\lim_{T\to\infty}E^Y_x(&Z_{T_1}1(\rho>T_1)\int q(T-T_1,Y_{T_1},z)\phi(z)\,dm(z)/P^Y_x(\rho>T)\cr
=&\lim_{T\to\infty}E^Y_x(Z_{T_1}1(\rho>T_1)e^{-\lambda_0(T-T_1)}\int\phi\psi_0\,dm\,\psi_0(Y_{T_1}))/P^Y_x(\rho>T)\cr
&\phantom{\lim_{T\to\infty}E_x(Z_{T_1}1(\rho>T_1)e^{-\lambda_0(T-T_1)}}+\lim_{T\to\infty}\delta(x,T),
\end{align}
where (recall that $Z_\infty$ is uniformly bounded)
\begin{equation*}
|\delta(x,T)|\le C\Vert\phi\Vert_\infty E^Y_x\Bigl(\int\sum_{n=1}^\infty|\psi_n(Y_{T_1})|e^{-\lambda_n(T-T_1)}|\psi_n(z)|\,dm(z)\Bigr)/P^Y_x(\rho>T).
\end{equation*}
Now use the second inequality in \eqref{rboundnew} 
with $\delta=1/8$ to deduce that for $T-T_1\ge s^*(1/8)$, 
\begin{eqnarray*} 
|\delta(x,T)|&\le&C\Vert\phi\Vert_\infty e^{-\lambda_1T}c_\delta E^Y_x\Bigl(e^{Y^2_{T_1}/8}\Bigr)/P^Y_x(\rho>T)\cr
&\le&C'(x)e^{-\lambda_1T}/P^Y_x(\rho>T)\to0\text{ as }T\to\infty,
\end{eqnarray*}
the last convergence by \eqref{rhotail} and \eqref{rbound}.  \eqref{rhotail} also shows that the first term in \eqref{efnexp} is
\begin{align*}\lim_{T\to\infty}E^Y_x&(Z_{T_1}\psi_0(Y_{T_1})1(\rho>T_1))\frac{e^{\lambda_0T_1}}{\psi_0(x)\theta+r(T,x)}\int\phi\psi_0\,dm\cr
&=E^Y_x\Bigl(Z_{T_1}\frac{\psi_0(Y_{T_1})}{\psi_0(x)}1(\rho>T_1)\Bigr)e^{\lambda_0T_1}\int\phi\psi_0\,dm/\theta\quad\text{(by \eqref{rbound})}\cr
&=E_x^\infty(Z_{T_1})\int\phi\psi_0\,dm/\theta.
\end{align*}
This establishes\eqref{T1limi} and so completes the proof.
\end{proof}


\begin{prop} \label{LTlimit} Assume $h\ge 0$ is a bounded Borel function on the line.

\noindent(a) There is a universal constant $C_{\ref{LTlimit}}>0$ such that for any $t>0$,
\begin{align*}\lim_{\lambda\to\infty}& (\lambda^2 t)^{\lambda_0} E^X_{X_0}\Bigl(\int e^{-\lambda X(t,x)}h(x)X(t,x)\,dx\Bigr)\cr
&=C_{\ref{LTlimit}}\int\int h(w_0+\sqrt t z)\exp\Bigl(-t^{-1}\int F(z+t^{-1/2}(w_0-x_0))dX_0(x_0)\Bigr)\cr
&\phantom{\quad\times\int\int h(w_0+\sqrt t z)\exp\Bigl(-t^{-1}\int F}\times\psi_0(z)dm(z)dX_0(w_0).
\end{align*}

\noindent (b) There is a constant $C$ such that for all $\lambda,t>0$,
$$ (\lambda^2 t)^{\lambda_0} E^X_{X_0}\Bigl(\int e^{-\lambda X(t,x)}h(x)X(t,x)\,dx\Bigr)\le C\Vert h\Vert_\infty X_0(1).$$
\end{prop}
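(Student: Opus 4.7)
The plan is to combine the path decomposition of Lemma~\ref{LTequation} with the eigenfunction asymptotics of Lemma~\ref{condlimit} and Theorem~\ref{fens}(d). Setting $T=\log(\lambda^2t)$ and using the identity
\[\exp\Bigl(-\int_0^TH_{e^s}(Y_s)\,ds\Bigr)=Z_T\exp\Bigl(-\int_0^TF(Y_s)\,ds\Bigr),\]
the inner $E^Y_{B_1}$-expectation in \eqref{LTdecomp} equals $E^Y_{B_1}\bigl(Z_T\Psi_T(Y_T);\rho>T\bigr)$, where
\[\Psi_T(z):=\int h(w_0+\sqrt tz)\exp\Bigl(-t^{-1}\!\!\int H_{e^T}\bigl(z+(w_0-x_0)/\sqrt t\bigr)dX_0(x_0)\Bigr)dX_0(w_0)\]
and $\rho=\rho_F$. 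Let $\Psi_\infty$ denote the same expression with $H_{e^T}$ replaced by $F$; both $\Psi_T$ and $\Psi_\infty$ are bounded by $\Vert h\Vert_\infty X_0(\R)$.

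Part (b) follows from uniform bounds. For $\lambda^2t\le 1$ the trivial estimate $(\lambda^2t)^{\lambda_0}\le 1$ together with the martingale property $E^X_{X_0}(\int e^{-\lambda X(t,x)}h(x)X(t,x)dx)\le\Vert h\Vert_\infty X_0(\R)$ suffices. For $\lambda^2t>1$, Lemma~\ref{prel}(b) gives $Z_T\le C_Z$, and Theorem~\ref{fens}(d) combined with \eqref{psi0bnd} and \eqref{rbound} yields, for any fixed $\delta\in(0,1/4)$,
\[(\lambda^2t)^{\lambda_0}P^Y_{B_1}(\rho>T)=\theta\psi_0(B_1)+r(T,B_1)\le Ce^{\delta B_1^2}.\]
Since the outer factor $\exp(-\int_0^1V^1(u,B_u)du)\le 1$ and $E^B_0(e^{\delta B_1^2})<\infty$ whenever $\delta<1/2$, taking $E^B_0$ delivers (b).

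For part (a), I would establish pointwise convergence in $B$ of $(\lambda^2t)^{\lambda_0}\exp(-\int_0^1V^1(u,B_u)du)E^Y_{B_1}(Z_T\Psi_T(Y_T);\rho>T)$ and then pass the limit through $E^B_0$ using the bound from (b) as dominating function. Lemma~\ref{condlimit} combined with Theorem~\ref{fens}(d) yields, for any bounded measurable $\phi$,
\[\lim_{T\to\infty}e^{\lambda_0T}E^Y_y\bigl(Z_T\phi(Y_T);\rho>T\bigr)=\psi_0(y)E_y^\infty(Z_\infty)\int\phi\,\psi_0\,dm.\]
Applied to the bounded measurable $\phi=\Psi_\infty$ this identifies the pointwise limit as $\psi_0(B_1)E_{B_1}^\infty(Z_\infty)\int\Psi_\infty(z)\psi_0(z)\,dm(z)$, and a Fubini rearrangement of the $w_0$- and $z$-integrations matches the integrand in the statement. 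Taking $E^B_0$ exhibits the universal constant
\[C_{\ref{LTlimit}}=E^B_0\Bigl(e^{-\int_0^1V^1(u,B_u)du}\psi_0(B_1)E_{B_1}^\infty(Z_\infty)\Bigr),\]
which is finite by the estimate used for (b).

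The main obstacle, and the step I expect to require the most care, is controlling the error $e^{\lambda_0T}E^Y_{B_1}(Z_T(\Psi_T-\Psi_\infty)(Y_T);\rho>T)$: since $\Psi_T$ genuinely depends on $T$, the displayed limit identity does not apply directly. I plan to handle this via $|e^{-A_T}-e^{-A_\infty}|\le A_\infty-A_T$ combined with a freeze-and-release argument exploiting $H_{e^s}\uparrow F$: for every fixed $T^*\ge 0$ and all $T\ge T^*$ one has $F-H_{e^T}\le F-H_{e^{T^*}}$, so the limit identity applied to the \emph{fixed} test function $z\mapsto(F-H_{e^{T^*}})(z+a)$ with $a=(w_0-x_0)/\sqrt t$ gives
\[\limsup_{T\to\infty}e^{\lambda_0T}E^Y_{B_1}\bigl(Z_T(F-H_{e^T})(Y_T+a);\rho>T\bigr)\le\psi_0(B_1)E_{B_1}^\infty(Z_\infty)\!\int(F-H_{e^{T^*}})(z+a)\psi_0(z)\,dm(z).\]
Integrating against the finite measure $t^{-1}X_0(dw_0)X_0(dx_0)$ and letting $T^*\to\infty$ drives this bound to zero by dominated convergence, with $\Vert F\Vert_\infty\psi_0\in L^1(m)$ providing an $a$-uniform envelope; this kills the error term and completes part (a). Everything else is routine Fubini and DCT bookkeeping.
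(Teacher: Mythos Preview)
Your proof is correct and follows the paper's strategy: rewrite via Lemma~\ref{LTequation}, factor out $Z_T$ and the killing event $\{\rho>T\}$, apply Lemma~\ref{condlimit} together with Theorem~\ref{fens}(d) for the main term, and dominate via the bound from part~(b). Your formula for $C_{\ref{LTlimit}}$ matches the paper's \eqref{CForm}.

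The one substantive difference is your handling of the error $\Psi_T-\Psi_\infty$. The paper dispatches this in one line using the quantitative rate already established in Lemma~\ref{prel}(a): since $\sup_x(F-H_{e^T})(x)\le C e^{T(1-\delta_{\eqref{del1}})/2}$ with $\delta_{\eqref{del1}}>1$, one gets directly $|I_T-I_\infty|\le c(t,X_0)\lambda^{1-\delta_{\eqref{del1}}}\to 0$ (this is \eqref{Irate}--\eqref{deltarate}), and the conditional expectation is bounded by $C_Z\Vert h\Vert_\infty$. Your freeze-and-release argument via monotonicity of $H_{e^s}$ and iterated dominated convergence is correct but more elaborate than needed, given that you are already invoking Lemma~\ref{prel}(b) (itself a consequence of Lemma~\ref{prel}(a)) for the bound $Z_T\le C_Z$. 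The paper's route is shorter here; yours has the minor virtue of not needing the explicit exponent $\delta_{\eqref{del1}}$ at this step, only the qualitative convergence $H_{e^T}\uparrow F$.
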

\begin{proof} Set $T=\log(\lambda^2 t)$. To simplify the expression obtained in Lemma~\ref{LTequation}, introduce
$$I_T(w_0,y)=\exp\Bigl(-t^{-1}\int H_{e^T}(y+t^{-1/2}(w_0-x_0))\,dX_0(x_0)\Bigr),$$
$$I_\infty(w_0,y)=\exp\Bigl(-t^{-1}\int F(y+t^{-1/2}(w_0-x_0))\,dX_0(x_0)\Bigr),$$
and 
$$\Psi(w_0,x,T)=E^Y_x\Bigl(\exp\Bigl(-\int_0^T F(Y_s)ds\Bigr) Z_T h(w_0+\sqrt t Y_T)I_T(w_0,Y_T)\Bigr).$$
then Lemma~\ref{LTequation} states that for $\lambda^2t\ge 1$,
\begin{align}\label{LTdecomp2}
E^X_{X_0}&\Bigl(\int e^{-\lambda X(t,x)} h(x)X(t,x)\,dx\Bigr)\cr
&=E_0^B\Bigl(\exp\Bigl(-\int_0^1 V^1(u,B(u))\,du\Bigr)\int \Psi(w_0,B_1,T)\,dX_0(w_0)\Bigr).
\end{align}
It follows from \eqref{Hdef} and Lemma~\ref{prel}(a) that 
\begin{equation*} 0\le F(x)-H_u(x)\le c(1)u^{(1-\delta_{\eqref{del1}})/2}\quad\forall u>0, \ x\in\R,
\end{equation*}
and so 
\begin{equation}\label{Irate}
0\le (I_T-I_\infty)(w_0,Y_T)\le \frac{X_0(1)}{t}c(1)e^{T(1-\delta_{\eqref{del1}})/2}=c(1)X_0(1)t^{-(1+\delta_{\eqref{del1}})/2}\lambda^{1-\delta_{\eqref{del1}}}.
\end{equation}
Therefore
\begin{align}\label{psieqn}
\frac{\Psi(w_0,x,T)}{P^Y_x(\rho>T)}&=E^Y_x(Z_T(Y)h(w_0+\sqrt t Y_T)I_T(w_0,Y_T)|\rho>T)\cr
&=E^Y_x(Z_T(Y)h(w_0+\sqrt t Y_T)I_\infty(w_0,Y_T)|\rho>T)+\delta(T,x,w_0),
\end{align}
where by \eqref{Irate},
\begin{equation}\label{deltarate}
|\delta(T,x,w_0)|\le \Vert h\Vert_\infty C_Zc(1)X_0(1)t^{-(1+\delta_{\eqref{del1}})/2} \lambda^{1-\delta_{\eqref{del1}}}\to 0\text{ as }\lambda\to\infty.
\end{equation}
Lemma~\ref{condlimit}, together with \eqref{psieqn} and \eqref{deltarate}, show that
\begin{equation}\label{psilimit}
\lim_{\lambda\to\infty}\frac{\Psi(w_0,x,T)}{P^Y_x(\rho>T)}=E^\infty_x(Z_\infty)\int h(w_0+\sqrt tz)I_\infty(w_0,z)\psi_0(z)dm(z)/\theta,
\end{equation}
and the first line in \eqref{psieqn} implies 
\begin{equation}\label{psibnd}
\frac{|\Psi(w_0,x,T)|}{P^Y_x(\rho>T)}\le C_Z\Vert h\Vert_\infty.
\end{equation}
Apply \eqref{rhotail} and \eqref{rbound} with $\delta=1/8$ to conclude 
\begin{equation}\label{killrate}
|e^{\lambda_0 T}P^Y_x(\rho>T)-\theta\psi_0(x)|\le c_\delta e^{\delta x^2}e^{-(\lambda_1-\lambda_0)T}.
\end{equation}
Returning to \eqref{LTdecomp2} we have for $\lambda^2t\ge 1$ (assumed henceforth)
\begin{align}\label{unifLTbound}
(&\lambda^2 t)^{\lambda_0}E^X_{X_0}\Bigl(\int e^{-\lambda X(t,x)}X(t,x)\,dx\Bigr)\cr
&=E_0^B\Bigl[\exp\Bigl(-\int_0^1V^1(u,B(u))\,du\Bigr)\int\frac{\Psi(w_0,B_1,T)}{P^Y_{B_1}(\rho>T)}dX(w_0) e^{\lambda_0T}P^Y_{B_1}(\rho>T)\Bigr]\cr
&\rightarrow E_0^B\Bigl(\exp\Bigl(-\int_0^1V^1(u,B(u))\,du\Bigr)E^\infty_{B_1}(Z_\infty)\cr
&\phantom{\rightarrow E_0^B\Bigl(\exp\Bigl(-\int_0^1}\times\int h(w_0+\sqrt t z)I_\infty(w_0,z)\psi_0(z)dm(z)dX_0(w_0)\psi_0(B_1)\Bigr)
\end{align}
as $\lambda\to\infty$, where \eqref{psilimit} is used in the last and Dominated Convergence may be applied thanks to \eqref{psibnd}, \eqref{killrate} and \eqref{psi0bnd}. This gives (a) with 
\begin{equation}\label{CForm}C_{\ref{LTlimit}}=E_0^B\Bigl(\exp\Bigl(-\int_0^1 V^1(u,B_u)\,du\Bigr) E_{B_1}^\infty(Z_\infty(Y))\psi_0(B_1)\Bigr).
\end{equation}
For (b), if $\lambda^2 t\ge 1$, use \eqref{psibnd}, \eqref{killrate} and \eqref{psi0bnd} to bound the second line in the display \eqref{unifLTbound} by
$$c\Vert h\Vert_\infty X_0(1)E_0^B(e^{B(1)^2/8})\le c\Vert  h\Vert_\infty X_0(1).$$
If $\lambda^2t<1$, the expression to be bounded is at most $\Vert h\Vert_\infty E^X_{X_0}(X_t(1))=\Vert h\Vert_\infty X_0(1)$.
\end{proof}

Here is the promised refinement of Lemma~\ref{prel}(a) giving the exact rate of convergence in Proposition~\ref{kamin}(a).
\begin{prop}\label{Vrate}
\noindent (a) There is a constant $\bar C_{\ref{Vrate}}$ such that
$$\sup_x V^\infty(t,x)-V^\lambda(t,x)\le \bar C_{\ref{Vrate}}t^{-\frac{1}{2}-\lambda_0}\lambda^{-(2\lambda_0-1)}\ \forall\lambda>0.$$
\noindent(b) For any $K\ge 1$ there is a $\underline C_{\ref{Vrate}}(K)>0$ which is non-increasing in $K$ such that for any $t>0$,
\begin{equation}\label{Vlb}\inf_{|x|\le K\sqrt t} V^\infty(t,x)-V^\lambda(t,x)\ge \underline C_{\ref{Vrate}}(K)t^{-\frac{1}{2}-\lambda_0}\lambda^{-(2\lambda_0-1)}\ \forall\lambda\ge t^{-1/2},
\end{equation}
and
 \begin{equation}\label{Vlb2}\inf_{|x|\le K\sqrt t} V^\infty(t,x)-V^\lambda(t,x)\ge \underline C_{\ref{Vrate}}(K)(t^{-1}\wedge t^{-\frac{1}{2}-\lambda_0})\lambda^{-(2\lambda_0-1)}\ \forall\lambda\ge1.
 \end{equation}
\end{prop}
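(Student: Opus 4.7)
My plan is to prove (a) by scaling and then (b) by upgrading the integrated asymptotic of Proposition~\ref{LTlimit}(a) to a pointwise one. The scaling identity from \eqref{pdescale} gives
\[V^\infty(t,x)-V^\lambda(t,x) = t^{-1}\bigl[F(x/\sqrt t)-V^{\lambda\sqrt t}(1, x/\sqrt t)\bigr],\]
so (a) reduces to showing $\sup_z F(z)-V^\mu(1,z)\le \bar C_0\,\mu^{-(2\lambda_0-1)}$ for every $\mu>0$. Proposition~\ref{LTlimit}(b) applied with $h\equiv 1$, $X_0=\delta_0$, $t=1/2$ verifies the hypothesis of Lemma~\ref{prelgen} with $r=2\lambda_0$ and $C(1/2)$ bounded; invoking Lemma~\ref{prelgen} at $t=1$ produces the required $t=1$ bound, and scaling back recovers the $t^{-1/2-\lambda_0}$ factor.

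For (b), when $\lambda\le t^{-1/2}$ (which forces $t\le 1$) the bound \eqref{Vlb2} follows from compactness: scaling reduces it to $F(y)-V^\mu(1,y)\ge c(K)$ uniformly on $(y,\mu)\in[-K,K]\times[0,1]$, which holds by joint continuity and strict positivity of $F-V^\mu$ on that compact set (note $V^0\equiv 0$ and $V^\mu<V^\infty=F$ for $\mu<\infty$). The serious content of (b) is \eqref{Vlb}, which by scaling reduces to
\[\inf_{|y|\le K}[F(y)-V^\mu(1,y)]\ge \underline C_0(K)\,\mu^{-(2\lambda_0-1)}\qquad\forall\mu\ge 1.\]
I would establish this by combining a pointwise asymptotic as $\mu\to\infty$ with compactness for bounded $\mu$. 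The asymptotic comes from the Palm/Campbell identity
\[E^X_{\delta_y}(X(1,x)e^{-\mu X(1,x)}) = \mathbb{N}_y(X(1,x)e^{-\mu X(1,x)})\cdot e^{-V^\mu(1,x-y)}\]
(valid as densities in $x$ by Poisson superposition), which together with Proposition~\ref{LTlimit}(a) and the convergence $e^{-V^\mu}\to e^{-F}$ gives $\mu^{2\lambda_0}\mathbb{N}_y(X(1,x)e^{-\mu X(1,x)})\to C_{\ref{LTlimit}}\psi_0(x-y)p(1,x-y)$ pointwise. Integrating in $\mu$ from $\mu$ to $\infty$ (justified by dominated convergence using the upper bound from (a)) and using $F(y)-V^\mu(1,y)=\int_\mu^\infty \mathbb{N}_0(X(1,-y)e^{-\nu X(1,-y)})\,d\nu$ from \eqref{can} yields
\[\lim_{\mu\to\infty}\mu^{2\lambda_0-1}[F(y)-V^\mu(1,y)] = \frac{C_{\ref{LTlimit}}}{2\lambda_0-1}\psi_0(-y)p(1,-y)=:c(y),\]
uniformly in $|y|\le K$. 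Since $\psi_0>0$ (Theorem~\ref{fens}(a)), $c$ is continuous and strictly positive on $[-K,K]$, giving the bound for $\mu\ge M(K)$; on the compact range $[1,M(K)]\times[-K,K]$, continuity and strict positivity of $F-V^\mu$ produce a uniform positive minimum, which dominates $\underline C_0(K)\mu^{-(2\lambda_0-1)}$ since $\mu\ge 1$ makes $\mu^{-(2\lambda_0-1)}\le 1$.

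The main obstacle is upgrading Proposition~\ref{LTlimit}(a) from its $h$-integrated form to a pointwise asymptotic uniform on compact $(x,y)$ sets. The cleanest route is to revisit the proof of Proposition~\ref{LTlimit} and replace the integration against $h$ in Lemma~\ref{LTequation} by disintegrating the expectation over the terminal value $Y_T$ through its transition density $q(T,B_1,\cdot)$; the identity $e^{\lambda_0 T}q(T,B_1,z)\to\psi_0(B_1)\psi_0(z)$ from Theorem~\ref{fens}(c), combined with the Dominated Convergence argument driving Proposition~\ref{LTlimit}(a) and the uniform bound of Proposition~\ref{LTlimit}(b), then delivers the required pointwise asymptotic.
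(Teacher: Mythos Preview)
Your argument for (a) is correct and essentially matches the paper's: apply Proposition~\ref{LTlimit}(b) with $h\equiv 1$ together with Lemma~\ref{prelgen}, then use the scaling \eqref{pdescale} to get the correct $t$-dependence.

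For (b) your route differs from the paper's and carries the gap you yourself flag. Proposition~\ref{LTlimit}(a) is an $h$-integrated statement, and your argument needs a \emph{pointwise} (in $x$) asymptotic for $\mu^{2\lambda_0}\mathbb{N}_0(X(1,x)e^{-\mu X(1,x)})$ uniform on compacts. Your proposed fix (disintegrate over $Y_T$ via $q(T,B_1,\cdot)$ in the proof of Lemma~\ref{LTequation}) is plausible in principle, since Theorem~\ref{fens}(c) does give pointwise control of $e^{\lambda_0 T}q(T,\cdot,\cdot)$, but it is not immediate: the killing in Lemma~\ref{LTequation} is by $H_{e^s}$, not $F$, so you would need to justify that the $Z_T$ correction behaves well conditionally on $Y_T=z$, and you would also need uniformity of the limit on $|y|\le K$ (you assert it but do not argue it).

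The paper sidesteps this entirely. It first uses Proposition~\ref{LTlimit}(a) with $h=1_{[-K,K]}$ and $X_0=\delta_0$ to obtain, after integrating in $\lambda$, an \emph{integrated} lower bound
\[
\int_{-K}^K \bigl(V^\infty-V^\lambda\bigr)(1,x)\,dx \ge c(K)\,\lambda^{1-2\lambda_0}.
\]
It then upgrades this to a pointwise bound by the Markov property at time $t/2$: from the third line of \eqref{Vincrement},
\[
e^{-V^\lambda(t,x)}-e^{-V^\infty(t,x)} \ge E^X_{\delta_x}\!\bigl(e^{-(4/t)X_{t/2}(1)}\,X_{t/2}(V^\infty_{t/2}-V^\lambda_{t/2})\bigr),
\]
and a Campbell-measure computation reduces the right side to a heat-kernel average of $V^\infty_{t/2}-V^\lambda_{t/2}$, into which the integrated lower bound can be inserted. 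This smoothing trick converts the integrated asymptotic to a pointwise one without ever needing a pointwise version of Proposition~\ref{LTlimit}(a). Scaling then gives \eqref{Vlb}, and \eqref{Vlb2} follows by the same compactness argument you give. The paper's route is shorter and avoids the technical upgrade you identify as the main obstacle.
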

\begin{proof} (a) Apply Proposition~\ref{LTlimit}(b) and Lemma~\ref{prelgen} to see that for $\lambda^2 t\ge 1$, 
\begin{align*}
\sup_x V^\infty(t,x)-V^\lambda(t,x)&\le e^{6/t}t^{-1/2-\lambda_0}C(2\lambda_0-1)^{-1}\lambda^{-(2\lambda_0-1)}\cr
&=C'e^{6/t}t^{-1}(\sqrt t \lambda)^{-(2\lambda_0-1)}.
\end{align*}
For $\lambda^2 t<1$, by \eqref{Vinftybound} the lefthand side of the above is at most 
$$\sup_xV^\infty(t,x)\le 2t^{-1}\le 2e^{6/t}t^{-1}(\sqrt t \lambda)^{-(2\lambda_0-1)}.$$
This proves (a) but with an additional factor of $e^{6/t}$.  This can be removed by applying the scaling result \eqref{pdescale} and the above bound for $t=1$
to conclude that for all $\lambda>0$,
\begin{equation}\label{scale2}
V^\infty(t,x)-V^\lambda(t,x)=t^{-1}(V^\infty-V^{\sqrt t\lambda})(1,x/\sqrt t)\le \bar C t^{-1}(\sqrt t\lambda)^{-(2\lambda_0-1)}.
\end{equation}
\noindent(b) Set $h=1_{[-K,K]}$ in Proposition~\ref{LTlimit} with $X_0=\delta_0$ and argue as in the first line of \eqref{Vderiv} to see that for $\lambda\ge\underline\lambda(t,K)$ and a universal positive constant $c$,
\begin{align}\label{intVbnd}
-\frac{d}{d\lambda}\int_{-K}^K e^{-V^\lambda(t,x)}-&e^{-V^\infty(t,x)}\,dx
=E_{\delta_0}\Bigl(\int e^{-\lambda X(t,x)}h(x) X(t,x)\,dx\Bigr)\cr
&\ge c\int_{-K/\sqrt t}^{K/\sqrt t} \exp(-t^{-1}F(z))\psi_0(z)dm(z)t^{-\lambda_0}\lambda^{-2\lambda_0}\cr
&=c_0(t,K)\lambda^{-2\lambda_0}.
\end{align}
Integrate out $\lambda$ to conclude that for $\lambda\ge \underline\lambda(t,K)$, 
\begin{align}\label{Vint1}\int_{-K}^KV^\infty(t,x)-V^\lambda(t,x)\,dx&\ge \int_{-K}^Ke^{-V^\lambda(t,x)}-e^{-V^\infty(t,x)}\,dx\cr
&\ge \frac{c_0(t,K)}{2\lambda_0-1}\lambda^{1-2\lambda_0}.
\end{align}
If $0<\lambda<\underline\lambda(t,K)$, then
\begin{equation}\label{Vint2}
\int_{-K}^K (V^\infty-V^\lambda)(t,x)\,dx\ge \int_{-K}^K (V^\infty-V^{\underline\lambda(t,K)})(t,x)\,dx\equiv c_1(t,K)>0.
\end{equation}
The last inequality holds since the first line of \eqref{Vincrement} shows strict positivity of $V^\infty(t,x) -V^\lambda(t,x)$ for all $t>0$ and $x$.  \eqref{Vint1} and \eqref{Vint2} imply that for some $c_2(t,K)>0$,
\begin{equation}\label{Vint3}\int_{-K}^K (V^\infty-V^\lambda)(t,x)\,dx\ge c_2(t,K)\lambda^{1-2\lambda_0}\quad\forall\lambda\ge 1.
\end{equation}

From the third line of \eqref{Vincrement} we have 
\begin{align*}
e^{-V^\lambda(t,x)}-e^{-V^\infty(t,x)}&=E_{\delta_x}\Bigl(e^{-X_{t/2}(V^\lambda_{t/2})}-e^{-X_{t/2}(V^\infty_{t/2})}\Bigr)\cr
&\ge  E_{\delta_x}\Bigl(e^{-X_{t/2}(V^\infty_{t/2})}X_{t/2}(V^\infty_{t/2}-V^\lambda_{t/2})\Bigr)\cr
&\ge  E_{\delta_x}\Bigl(e^{-(4/t)X_{t/2}(1)}X_{t/2}(V^\infty_{t/2}-V^\lambda_{t/2})\Bigr)\quad\text{(by\eqref{Vinftybound})}.
\end{align*}
So if $r=r(t)=4/t$ and $G=G(t)=V^\infty_{t/2}-V^\lambda_{t/2}$, we seek a lower bound on 
\begin{equation}\label{Camp}E^X_{\delta_x}(e^{-rX_{t/2}(1)}X_{t/2}(G)).
\end{equation}
If $V_t(\phi)$ denotes the nonlinear semigroup associated with $X$ (see Section II.5 of \cite{per02}), we may use the Campbell measure formula for $X_t$ as in \eqref{campbell} along with $V_s(r)=\frac{2r}{2+rs}$ (ie. the non-linear semigroup with $\phi\equiv r$ constant) to see that if $W$ is a Brownian motion starting at $x$, then for $|x|\le K$, \eqref{Camp}
equals
\begin{align}\label{scaleVbnd}
E^W_x&\times E^X_{\delta_x}\Bigl(e^{-rX_{t/2}(1)}\exp\Bigl(-\int_0^{t/2}\frac{2r}{2+rs}\,ds\Bigr)G(W_{t/2})\Bigr)\cr
&=\exp\Bigl(\frac{-2r}{2+(rt/2)}\Bigr)\Bigl(1+\frac{rt}{4}\Bigr)^{-2}\int p_{t/2}(y-x)G(y)\,dy\cr
&\ge c(t,K)\int_{-K}^KG(y)\,dy\cr
&\ge c'(t,K)\lambda^{-(2\lambda_0-1)},\quad\text{for all }\lambda\ge1,
\end{align}
the last by \eqref{Vint3}. Now use the first inequality in \eqref{expbnds} and the above to derive
\[\inf_{|x|\le K} V^\infty(t,x)-V^\lambda(t,x)\ge c'(t,K)\lambda^{-(2\lambda_0-1)}\text{ for all }\lambda\ge 1,\]
and where we may assume $c'(t,K)>0$ is non-increasing in $K$ for each $t$. 

The scaling relation in \eqref{scale2} and the above bound for $t=1$ shows that for all $\lambda\ge t^{-1/2}$, 
\begin{align}\label{scalebnd} \inf_{|x|\le K\sqrt t}V^\infty(t,x)-V^\lambda(t,x)&=\inf_{|x|\le K}t^{-1}(V^\infty-V^{\sqrt t\lambda})(1,x)\cr
&\ge c'(1,K)t^{-\frac{1}{2}-\lambda_0}\lambda^{-(2\lambda_0-1)}.
\end{align}
This gives \eqref{Vlb} and it remains to prove \eqref{Vlb2}. By \eqref{Vlb} we may assume $t<1$ and $1\le \lambda\le t^{-1/2}$.  Then $c(K)\equiv\inf_{|x|\le K}V^\infty(1,x)-V^1(1,x)>0$, where the last inequality holds by the strict positivity of the difference for each $x$ as noted above.  By scaling, as in the above display, the lefthand side of \eqref{scalebnd} is at least $t^{-1}c(K)$ which implies \eqref{Vlb2}.
\end{proof}

The following Tauberian Theorem is implicit in Theorem 1 of \cite{HS85} (see especially p. 350).  The explicit constants below are not given there but follow from an elementary argument given in Appendix~\ref{Tauberian} below. 

\begin{lemma}\label{Taub}
Let $U$ be the distribution function of a sub-probability on $(0,\infty)$, set $\hat U(\lambda)=\int_0^\infty e^{-\lambda x}dU(x)$ and let $p>0$. 

\noindent(a) Assume for some $C_2>0$, 
\begin{equation}\label{LTUbound}\hat U(\lambda)\le C_2\lambda^{-p}\text{ for all }\lambda>0.\end{equation}
  Then $U(a)\le eC_2a^p$ for all $a>0$.

\noindent(b) Assume \eqref{LTUbound} and for some $C_1>0$, $\underline\lambda\ge 0$, 
\begin{equation}\label{LTLbound}
\hat U(\lambda)\ge C_1 \lambda^{-p}\text{ for all }\lambda> \underline\lambda.
\end{equation}
If $d_1=\frac{C_1}{2}\Bigl(2\log\Bigl(\Bigl(\frac{2p}{e}\Bigr)^p\frac{4eC_2}{C_1}\Bigr)\vee 2p\vee\underline\lambda\Bigr)^{-p}$, then 
\begin{equation}\label{ULbound}
U(a)\ge d_1a^p\quad\text{ for all }a\in[0,1].
\end{equation}
In particular if $p\le 1$ and $\underline\lambda\le 4$, then 
\begin{equation}\label{ULboundb}
U(a)\ge \frac{C_1}{4}\Bigl(\log\Bigl(\frac{4eC_2}{C_1}\Bigr)\Bigr)^{-1}a^p\quad\text{ for all }a\in [0,1].
\end{equation}
\end{lemma}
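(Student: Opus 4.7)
The plan is to prove (a) by the standard Tauberian trick and (b) by the classical splitting argument, using both the upper and lower Laplace transform bounds simultaneously.

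For part (a), I would observe that $e^{-\lambda x} \ge e^{-1}$ for $x \in [0, 1/\lambda]$, so
\[
\hat U(1/a) \;=\; \int_0^\infty e^{-x/a}\, dU(x) \;\ge\; e^{-1} U(a),
\]
and combining with the hypothesis at $\lambda = 1/a$ yields $U(a) \le e C_2 a^p$ immediately.

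For part (b), I would start from the decomposition
\[
C_1 \lambda^{-p} \;\le\; \hat U(\lambda) \;\le\; U(a) \;+\; \int_a^\infty e^{-\lambda x}\, dU(x),
\]
and control the tail via the elementary inequality $e^{-\lambda x} \le e^{-(\lambda-\mu)a}\, e^{-\mu x}$ valid for $x \ge a$ and any $\mu \in (0,\lambda)$. Choosing $\mu = \lambda/2$ and using the upper bound hypothesis gives
\[
\int_a^\infty e^{-\lambda x}\, dU(x) \;\le\; e^{-\lambda a/2}\, \hat U(\lambda/2) \;\le\; 2^p C_2\, \lambda^{-p}\, e^{-\lambda a/2},
\]
and hence
\[
U(a) \;\ge\; \lambda^{-p}\bigl(C_1 - 2^p C_2 e^{-\lambda a/2}\bigr).
\]

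The final step is to substitute $\lambda = t/a$ and pick $t$ so that the bracket is at least $C_1/2$, while also respecting the constraint $\lambda > \underline\lambda$ (i.e.\ $t > a\underline\lambda$, which for $a \in [0,1]$ reduces to $t > \underline\lambda$). The explicit choice
\[
t \;=\; 2\log\!\Bigl(\bigl(\tfrac{2p}{e}\bigr)^p\, \tfrac{4eC_2}{C_1}\Bigr)\;\vee\; 2p\;\vee\;\underline\lambda
\]
makes $2^p C_2 e^{-t/2} \le C_1/2$ (using $(2p/e)^p$ to absorb the $2^p$ factor in a way that is convenient for all $p > 0$) and meets the $\underline\lambda$ constraint, giving $U(a) \ge (C_1/2) a^p t^{-p} = d_1 a^p$. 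The specialization \eqref{ULboundb} follows by noting that for $p \le 1$ we have $(2p/e)^p \le 1$, and $\underline\lambda \le 4$ together with $2p \le 2$ guarantee the log term dominates the other two (at least once one checks the mild requirement $4eC_2/C_1 \ge e^2$, which one may assume since otherwise the bound is trivial), so $t$ can be taken as $2\log(4eC_2/C_1)$ and a factor $2^{-p} \ge 1/2$ is absorbed into the constant $C_1/4$.

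The only subtle point is the bookkeeping to justify the specific form of $d_1$ when $p$ is very small or very large: for small $p$ the term $2p$ in the max prevents the bound from blowing up, and for large $p$ the factor $(2p/e)^p$ ensures $2^p C_2 e^{-t/2}$ is comfortably below $C_1/2$. I expect this constant-tracking to be the only finicky part; the structure of the argument is standard Karamata/Tauberian splitting and presents no conceptual difficulty.
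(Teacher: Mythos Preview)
Your argument is correct. Part (a) is identical to the paper's proof. For part (b) you take a genuinely different route: the paper integrates by parts to write $\hat U(\lambda)=\int_0^\infty e^{-y}U(y/\lambda)\,dy$, splits at $y=\beta$, and bounds the tail using the pointwise estimate $U(y/\lambda)\le eC_2(y/\lambda)^p$ from part (a), which then forces an estimate on $\int_\beta^\infty e^{-y}y^p\,dy$ requiring the structural constraint $\beta\ge 2p$. You instead split $\hat U(\lambda)$ at $x=a$ in the original Stieltjes integral and control the tail via $e^{-\lambda x}\le e^{-\lambda a/2}e^{-\lambda x/2}$ together with the hypothesis on $\hat U(\lambda/2)$; this avoids the gamma-type integral entirely. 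Your verification that the paper's explicit $t$ still makes $2^pC_2e^{-t/2}\le C_1/2$ reduces to the inequality $e^{p-1}\le 2p^p$, which holds for all $p>0$ (the function $p(1-\ln p)$ has maximum $1$ at $p=1$). One minor remark: in your approach the term $2p$ in the maximum is not structurally needed---it is an artefact of the paper's method---so your comment that it ``prevents the bound from blowing up for small $p$'' is not quite the right explanation, though including it is harmless and is what lets you recover the exact $d_1$. Your derivation of \eqref{ULboundb} is also fine once one notes $C_1\le C_2$ (forced by the two-sided bound for $\lambda>\underline\lambda$), which gives $4eC_2/C_1\ge 4e>e^2$ automatically.
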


\begin{theorem} \label{SBMdensity}Let $X(t,x)$ be the density of super-Brownian 
motion satisfying \eqref{SPDE} with finite initial measure $X_0$.

\noindent(a) $P^X_{X_0}(0<X(t,x)\le a)\le e\bar C_{\ref{Vrate}}X_0(1)t^{-(1/2)-\lambda_0}a^{2\lambda_0-1}$ $\forall a,t>0, x\in\R$.

\noindent(b) For all $K\ge 1$ there is a $\underline C_{\ref{SBMdensity}}(K)>0$ such that if $X_0(1)\le Kt$ and $X_0([x-K\sqrt t,x+K\sqrt t])/X_0(1)\ge K^{-2}$, then
\begin{equation}\label{denslbound}
P^X_{X_0}(0<X(t,x)\le a)\ge\underline C_{\ref{SBMdensity}}(K)X_0(1) t^{-(1/2)-\lambda_0}a^{2\lambda_0-1}\quad\forall\, 0\le a\le \sqrt t.
\end{equation}
In particular if $|x-x_0|\le K\sqrt t$ and $t\ge K^{-1}$, then 
\begin{equation}
P^X_{\delta_{x_0}}(0<X(t,x)\le a)\ge\underline C_{\ref{SBMdensity}}(K) t^{-(1/2)-\lambda_0}a^{2\lambda_0-1}\quad\forall\, 0\le a\le \sqrt t,
\end{equation}
and if $X_0(1)\le K$, $X_0([-K,K])\ge K^{-1}$, $t\ge K^{-1}$, and $|x|\le K$, then
\begin{equation}\label{denslbound2}
P^X_{X_0}(0<X(t,x)\le a)\ge\underline C_{\ref{SBMdensity}}(K^2)K^{-1}t^{-(1/2)-\lambda_0}a^{2\lambda_0-1}\quad\forall\, 0\le a\le \sqrt t.
\end{equation}
\end{theorem}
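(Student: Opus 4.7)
The plan is to apply the Tauberian estimates of Lemma~\ref{Taub} to the sub-probability distribution $U(a):=P^X_{X_0}(0<X(t,x)\le a)$. By \eqref{LLEe} and \eqref{Vinfform}, its Laplace transform has the explicit form
$$\hat U(\lambda)=\exp\Bigl(-\int V^\lambda(t,y-x)\,dX_0(y)\Bigr)-\exp\Bigl(-\int V^\infty(t,y-x)\,dX_0(y)\Bigr),$$
so the proof reduces to extracting matching upper and lower bounds on $\hat U$ of order $\lambda^{-(2\lambda_0-1)}$ from Proposition~\ref{Vrate}, and feeding them into Lemma~\ref{Taub} with $p=2\lambda_0-1\in(0,1)$.

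For (a), the elementary inequality $e^{-B}-e^{-A}\le A-B$ for $A\ge B\ge 0$ combined with Proposition~\ref{Vrate}(a) yields
$$\hat U(\lambda)\le\int(V^\infty-V^\lambda)(t,y-x)\,dX_0(y)\le \bar C_{\ref{Vrate}}X_0(1)t^{-(1/2)-\lambda_0}\lambda^{-(2\lambda_0-1)}\quad\forall\lambda>0,$$
and Lemma~\ref{Taub}(a) delivers (a) directly with $C_2=\bar C_{\ref{Vrate}}X_0(1)t^{-(1/2)-\lambda_0}$.

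For (b), I use the complementary inequality $e^{-B}-e^{-A}\ge e^{-A}(A-B)$ with $A=\int V^\infty(t,y-x)\,dX_0(y)$ and $B=\int V^\lambda(t,y-x)\,dX_0(y)$. The hypothesis $X_0(1)\le Kt$ combined with \eqref{Vinftybound} forces $A\le 2K$, hence $e^{-A}\ge e^{-2K}$, while the mass hypothesis $X_0([x-K\sqrt t,x+K\sqrt t])\ge X_0(1)/K^2$ and \eqref{Vlb} of Proposition~\ref{Vrate}(b) produce, for $\lambda\ge t^{-1/2}$,
$$A-B\ge K^{-2}X_0(1)\,\underline C_{\ref{Vrate}}(K)\,t^{-(1/2)-\lambda_0}\lambda^{-(2\lambda_0-1)}.$$
This delivers a lower bound $\hat U(\lambda)\ge C_1\lambda^{-(2\lambda_0-1)}$ for $\lambda\ge t^{-1/2}$ with $C_1=c(K)X_0(1)t^{-(1/2)-\lambda_0}$, and the ratio $C_2/C_1$ depends only on $K$. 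To obtain \eqref{denslbound} on the range $a\in[0,\sqrt t]$ rather than $[0,1]$, I rescale by $\tilde U(\tilde a):=U(\sqrt t\,\tilde a)$, whose Laplace transform $\hat{\tilde U}(\tilde\lambda)=\hat U(\tilde\lambda/\sqrt t)$ inherits matching bounds $\tilde C_i\tilde\lambda^{-p}$ for $\tilde\lambda\ge 1$ with the same ratio $\tilde C_2/\tilde C_1=C_2/C_1$; since $p\le 1$ and $\tilde{\underline\lambda}=1\le 4$, the clean bound \eqref{ULboundb} applies to $\tilde U$ on $[0,1]$, and unscaling yields \eqref{denslbound}. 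The two specializations are then immediate: for $X_0=\delta_{x_0}$ with $|x-x_0|\le K\sqrt t$ and $t\ge K^{-1}$ the hypotheses of (b) are transparent, and for the final case one applies (b) with $K$ replaced by $K^2$, checking that $t\ge K^{-1}$ and $|x|\le K$ force $[-K,K]\subset[x-K^2\sqrt t,x+K^2\sqrt t]$ (after enlarging $K$ if necessary so that $K^2\sqrt t\ge 2K$).

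The main delicate point is the rescaling step, which absorbs the otherwise awkward $t$-dependence of $\underline\lambda=t^{-1/2}$ inside the logarithm in the general formula for $d_1$ and lets the clean bound \eqref{ULboundb} deliver the claimed power of $a$ uniformly up to $a=\sqrt t$; everything else is routine Tauberian bookkeeping powered by the sharp rates in Proposition~\ref{Vrate}.
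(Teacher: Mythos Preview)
Your proof is correct and follows essentially the same approach as the paper: both parts reduce to the Tauberian Lemma~\ref{Taub} via the Laplace transform identity \eqref{LTFormula}, with the requisite bounds on $\hat U$ coming from Proposition~\ref{Vrate}. The only cosmetic difference is in part (b): the paper first reduces to $t=1$ via the super-Brownian scaling $X_0\mapsto X_0^t$ (with $X_0^t(A)=t^{-1}X_0(\sqrt t\,A)$) and then applies \eqref{ULboundb} directly, whereas you work at general $t$ and rescale the distribution function $U$ to push $\underline\lambda=t^{-1/2}$ down to $\tilde{\underline\lambda}=1$; these are equivalent manoeuvres (the SBM scaling is precisely what makes your $\tilde U$ a genuine sub-probability distribution for a new SBM), and your version is arguably a touch cleaner since it avoids invoking the measure-level scaling. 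Your handling of the two specializations, including the observation that the last one goes through once $K\ge 4$ (so that $K^2\sqrt t\ge 2K$), matches the paper's as well.
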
 
\begin{proof} We will apply Lemma~\ref{Taub} to $U(a)=P(0<X(t,x)\le a)$.  By \eqref{LLEb}, \eqref{Vinfform},  and translation invariance, 
\begin{align}\label{LTFormula}
\hat U(\lambda)&=E^X_{X_0}(e^{-\lambda X(t,x)}1(X(t,x)>0))\cr
&=\exp\Bigl(-\int V^\lambda(t,y-x)dX_0(y)\Bigr)-\exp\Bigl(-\int V^\infty(t,y-x)dX_0(y)\Bigr).
\end{align}
(a) Use Proposition~\ref{Vrate}(a) to see that for $\lambda>0$,
\begin{equation}\label{hatUbound}
\hat U(\lambda)\le \int(V^\infty-V^\lambda)(t,y-x)dX_0(y)\le \bar C_{\ref{Vrate}} X_0(1)t^{-(1/2)-\lambda_0}\lambda^{-(2\lambda_0-1)},
\end{equation}
and so (a) is immediate from Lemma~\ref{Taub}(a).

\noindent (b) Consider first $t=1$. By \eqref{LTFormula}, \eqref{Vinftybound} and \eqref{Vlb} for $\lambda\ge 1$, 
\begin{align*}
\hat U(\lambda)&\ge \exp\Bigl(-\int V^\infty(1,y-x)dX_0(y)\Bigr)\int(V^\infty-V^\lambda)(1,y-x)dX_0(y)\cr
&\ge \exp(-2X_0(1))\underline C_{\ref{Vrate}}(K)X_0([x-K,x+K])\lambda^{-(2\lambda_0-1)}\cr
&\equiv C_1(K,x,X_0)\lambda^{-(2\lambda_0-1)}.
\end{align*}
So by this and \eqref{hatUbound} we may use \eqref{ULboundb} in Lemma~\ref{Taub} (with $p=2\lambda_0-1<1$) to see that
\begin{equation*}
P^X_{X_0}(0<X(1,x)\le a)\ge \frac{C_1(K,x,X_0)}{4}\Bigl(\log\Bigl(\frac{4e\bar C_{\ref{Vrate}} X_0(1)}{C_1(K,x,X_0)}\Bigr)\Bigr)^{-1}a^{2\lambda_0-1}\quad\forall a\in[0,1]. 
\end{equation*}
For general $t$ we may use the scaling relation \eqref{scale2} and \eqref{LTFormula} to see that if $X_0^t(A)=t^{-1}X_0(\sqrt t A)$, then by the above 
for $0\le a\le \sqrt t$,
\begin{align}\label{LB1}
P&^X_{X_0}(0<X(t,x)\le a)\cr
&=P^X_{X_0^t}(0<X(1,x/\sqrt t)\le a/\sqrt t)\cr
&\ge \frac{C_1(K,x/\sqrt t,X_0^t)}{4}\Bigl(\log\Bigl(\frac{4e\bar C_{\ref{Vrate}} X_0(1)/t}{C_1(K,x/\sqrt t,X^t_0)}\Bigr)\Bigr)^{-1}(a/\sqrt t)^{2\lambda_0-1}.
\end{align}
A simple calculation, using the definition of $C_1$, shows that there is a $C_0(K)>0$ so that if $X_0$ is as in (b), then 
\[C_1(K,x/\sqrt t,X_0^t)\ge C_0(K) X_0(1)/t,\]
and so 
\[\log\Bigl(\frac{4e\bar C_{\ref{Vrate}} X_0(1)/t}{C_1(K,x,X_0)}\Bigr)\le C_3(K).\]
Use the above in \eqref{LB1} to conclude that 
\begin{equation*}
P^X_{X_0}(0<X(t,x)\le a)\ge C(K)X_0(1)t^{-1}(a/\sqrt t)^{2\lambda_0-1}.
\end{equation*}
This proves \eqref{denslbound}, and the last two assertions follow by elementary reasoning.  (The last follows first for $K\ge 4$, and hence for all $K\ge 1$.)
\end{proof}

\section{Proof of Theorem~\ref{dimBZ}}
\subsection {Lower Bound on the Hausdorff Dimension}\label{lbondim}
\setcounter{equation}{0}
Recall that $X$ is as in \eqref{SPDE} and the boundary of the zero set is
\begin{equation}\label{BZdef2}
BZ_t=\partial(\{x:X(t,x)=0\})=\{x:X(t,x)=0, \forall \delta>0\, X_t((x-\delta,x+\delta))>0\}.
\end{equation}
The key step in our lower bound on the Hausdorff dimension of the boundary of the zero set, $\text{dim}(BZ_t)$, is the following second moment  bound. 
The bound will depend on on a diffusion parameter $\sigma_0^2>1$, whose exact value is not important (but $\sigma_0^2=6$ will work).
For a finite initial measure $X_0$ and $t>0$, define $X_0p_u(x)=\int p_u(x-w_0)X_0(dw_0)$ and
\begin{align}\label{hdef}
h_{t,X_0}(z_1,z_2)&=t^{-2\lambda_0}\prod_{i=1}^2X_0p_{\sigma_0^2 t}(z_i)\cr
&\quad+t^{-\lambda_0}\int_0^t (t-s)^{-\lambda_0}
p_{8\sigma_0^2(t-s)}(z_1-z_2)X_0p_{4\sigma_0^2 t}(z_1))\,ds.
\end{align}
\begin{prop}\label{secmombnd} 
There is a constant $C_{\ref{secmombnd}}$ such that for all $\lambda^2\ge (9/t)$, and all $z_1,z_2$, 

\noindent (a) $\lambda^{4\lambda_0}E^X_{X_0}(X_t(z_1)X_t(z_2)e^{-\lambda X_t(z_1)-\lambda X_t(z_2)})\le C_{\ref{secmombnd}}h_{t,X_0}(z_1,z_2)$.
\medskip

\noindent(b) $h_{t,X_0}(z_1,z_2)\le C_{\ref{secmombnd}}(t^{-\lambda_0-1/2}X_0(1)|z_1-z_2|^{1-2\lambda_0}+(t^{-\lambda_0-1/2}X_0(1))^2)$.
\end{prop}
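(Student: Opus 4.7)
Part (b) is a direct Gaussian computation: each factor $X_0 p_{\sigma_0^2 t}(z_i)\le X_0(1)(2\pi\sigma_0^2 t)^{-1/2}$ handles the first term of $h_{t,X_0}$, while for the second term the substitution $u=t-s$ followed by $v=(z_1-z_2)^2/(16\sigma_0^2 u)$ reduces the remaining integral to a Gamma-function expression producing the $t$-independent factor $|z_1-z_2|^{1-2\lambda_0}$. I would therefore concentrate on part (a).

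The plan for (a) is to derive a second-moment analogue of the Campbell-type formula of Lemma~\ref{LTequation} and then rerun the asymptotic analysis of Proposition~\ref{LTlimit} on each leg of the resulting tree. Twice differentiating the log-Laplace identity $E^X_{X_0}[\exp(-X_t(\psi))]=\exp(-X_0(V_t(\psi)))$ in coefficients $\alpha_1,\alpha_2$ of $p_\epsilon(z_1-\cdot)$ and $p_\epsilon(z_2-\cdot)$, expanding about $\psi_*=\lambda(p_\epsilon(z_1-\cdot)+p_\epsilon(z_2-\cdot))$, and sending $\epsilon\downarrow 0$ should yield
\[E^X_{X_0}\bigl[X(t,z_1)X(t,z_2)e^{-\lambda X(t,z_1)-\lambda X(t,z_2)}\bigr]=\bigl(X_0(G^1_t)X_0(G^2_t)+X_0(\Xi_t)\bigr)e^{-X_0(W_t)},\]
where $W_t$ is the semilinear-heat solution with initial datum $\lambda(\delta_{z_1}+\delta_{z_2})$, each $G^i_t(x_0)$ is a Brownian-bridge expectation of $p_t(z_i-x_0)\exp\bigl(-\int_0^t W_{t-s}(B_s)\,ds\bigr)$, and $\Xi_t$ is the ``single branching at time $t-s$'' integral $\int_0^t E^B_{x_0}\bigl[G^1_{t-s}(B_s)G^2_{t-s}(B_s)\exp(-\int_0^s W_{t-r}(B_r)\,dr)\bigr]\,ds$.

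The key one-leg estimate exploits the PDE comparison principle applied to \eqref{Vlambdapde}, which gives $W_t(y)\ge V^\lambda(t,y-z_i)$ for $i=1,2$. Inserting this into the bridge expectation for $G^i_t$ reduces the leg targeted at $z_i$ to the first-moment situation of Lemma~\ref{LTequation}: a Brownian motion killed by a $V^\lambda$ potential centered at $z_i$. Running through the time-change $Y_s=B(e^s-1)e^{-s/2}$ and the killed-OU spectral tail from Theorem~\ref{fens}(d)(e) as in the proof of Proposition~\ref{LTlimit}, now under the bridge measure, produces a factor $(\lambda^2 t_{\text{leg}})^{-\lambda_0}$ times a Gaussian kernel with an enlarged variance $\sigma_0^2>1$; direct inspection shows $\sigma_0^2=6$ suffices, the inflation being the cost of discarding the contribution of the other target when lower-bounding $W_t$. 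The hypothesis $\lambda^2 t\ge 9$ is what makes $T=\log(\lambda^2 t_{\text{leg}})$ large enough for these spectral estimates to apply on the shortest leg.

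Finally, I would assemble the two pieces. The non-branching term gives $\lambda^{4\lambda_0}X_0(G^1_t)X_0(G^2_t)\le Ct^{-2\lambda_0}\prod_{i=1}^2 X_0 p_{\sigma_0^2 t}(z_i)$ directly. For the branching term I would integrate out the coalescence site $y$ using the identity $p_{t-s}(y-z_1)p_{t-s}(y-z_2)=p_{2(t-s)}(z_1-z_2)\,q(y)$ with $q$ a Gaussian in $y$ centered at the midpoint, then convolve $q$ against $p_s(x_0-\cdot)$ and integrate against $X_0$; enlarging variances once more yields the factor $p_{8\sigma_0^2(t-s)}(z_1-z_2)\,X_0 p_{4\sigma_0^2 t}(z_1)$. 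The hardest step will be Step~2: the first-moment proof of Proposition~\ref{LTlimit} absorbs the starting point into the OU process via $Y_0=B_1$, but here one needs estimates uniform in both the bridge starting point $x_0$ and the target $z_i$, and the coalescence integral over $s\in[0,t]$ creates borderline-integrable singularities at $s=t$ that must be matched precisely against the $(t-s)^{-\lambda_0}$ and $(t-s)^{-1/2}$ factors coming from the two outer legs and the Brownian convolution.
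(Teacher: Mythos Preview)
Your outline is correct and matches the paper's approach: the paper also differentiates the log-Laplace functional twice to obtain the two-term (non-branching plus branching) decomposition, uses the comparison $V^{\delta,\vec\lambda,\vec x}\ge V^{\delta,\lambda_i,x_i}$ to decouple each leg, and then runs the OU time-change and spectral tail bound from Theorem~\ref{fens}(d) leg by leg. One structural detail: rather than treating stem and two legs symmetrically, the paper absorbs the stem into the $z_1$-leg from the outset, so the branching term becomes $\int_0^t$ of a full-time-$t$ Feynman--Kac expression targeting $z_1$ times a one-leg factor $\psi^\lambda(t-s,\cdot)$ targeting $z_2$; the $s$-integral you flag as hardest is then handled by a three-case split ($\lambda^2(t-s)>1$ with $s\ge t/2$; $\lambda^2(t-s)>1$ with $s<t/2$; $\lambda^2(t-s)\le 1$), with the choice $\delta=(\sqrt 2-1)/2$ in the first case producing $\sigma_0^2=(3-2\sqrt 2)^{-1}\le 6$.
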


We will prove this result in Section~\ref{5.1} below, but first show how it can be used to obtain lower bounds on $\text{dim}(BZ_t)$. The lack of symmetry between $z_1$ and $z_2$ in the definition of $h_{t,X_0}$ indicates that our 
bound is not optimal, but it is the negative power along the diagonal which will be important for us, and our results suggest that this is optimal.

If we introduce random measures 
\begin{equation}\label{LTdef} L^\lambda_t(\phi)=(\lambda^2 t)^{\lambda_0}
\int \phi(x)X_t(x)e^{-\lambda X(t,x)}\,dx,\end{equation}
 then Proposition~\ref{LTlimit} shows that for some finite measure $\ell_t$ and any bounded Borel $\phi$, 
\[\lim_{\lambda\to\infty}E(L^\lambda_t(\phi))=\ell_t(\phi),\]
and the above result shows that $E(L^\lambda_t(1)^2)$ remains bounded as $\lambda\to\infty$. 
\medskip

\noindent{\bf Conjecture.} There is a random finite non-trivial measure $L_t$ on $\R$ such that for any bounded continuous $\phi$, 
\[L^\lambda_t(\phi)\to L_t(\phi)\text{ in }L^2\text{ as }\lambda\to\infty.\]
 Assuming this, it is then not hard to show that for some sequence $\lambda_n\uparrow\infty$,
$L^{\lambda_n}_t$ approaches $L^\lambda_t$ weakly on the space of measures  a.s. and that $L_t$ is supported by $BZ_t$.  We further conjecture that $L_t(1)>0$ a.s. on $\{X_t(1)>0\}$. 

\medskip

If $g_\beta(r)=r^{-\beta}$ for $\beta>0$, and $\mu$ is a finite measure on $\R$, and $A$ is an analytic subset of $\R$, let
\[\langle\mu,\mu\rangle_{g_\beta}=\int\int g_\beta(|x-y|)\,d\mu(x)d\mu(y),\]
\[I(g_\beta)(A)=\inf\{\langle\mu,\mu\rangle_{g_\beta}:\mu \text{ a probability supported by }A\}.\]
Then the $g_\beta$-capacity of $A$ is $C(g_\beta)(A)=I(g_\beta)(A)^{-1}$ (see, e.g. \cite{Hawkes79}, Section 3).  

\begin{theorem}\label{capcond}
For every $K\ge 1$ there is a positive constant $C_{\ref{capcond}}(K)$, non-increasing in $K$, so that for any analytic subset $A$ of $[-K,K]$, initial measure, $X_0$, satisfying $X_0(1)\le K$ and $X_0([-K,K])\ge 1/K$, and $t\in[K^{-1},K]$,
\begin{equation*} 
P^X_{X_0}(A\cap BZ_t\neq\emptyset)\ge C_{\ref{capcond}}(K)C(g_{2\lambda_0-1})(A).
\end{equation*}
\end{theorem}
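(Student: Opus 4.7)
The proof is a Frostman-type second-moment argument applied to the random functional
\[J^\lambda(\phi) = (\lambda^2 t)^{\lambda_0}\int X(t,x) e^{-\lambda X(t,x)}\phi(x)\, dx,\]
tested against a mollification of a near-optimal equilibrium measure on $A$. By inner regularity of the $g_{2\lambda_0-1}$-capacity one may assume $A$ is compact; if $I(g_{2\lambda_0-1})(A)=\infty$ the statement is trivial, so fix a probability $\mu$ on $A$ with $\langle\mu,\mu\rangle_{g_{2\lambda_0-1}}\le 2I(g_{2\lambda_0-1})(A)$. For $\epsilon>0$ set $\mu_\epsilon=\mu*\rho_\epsilon$ where $\rho_\epsilon\ge 0$ is a smooth bump with $\int\rho_\epsilon=1$ supported in $[-\epsilon,\epsilon]$; the Fourier representation of Riesz energy together with $|\widehat{\rho_\epsilon}|\le 1$ yields $\langle\mu_\epsilon,\mu_\epsilon\rangle_{g_{2\lambda_0-1}}\le \langle\mu,\mu\rangle_{g_{2\lambda_0-1}}$, and $\operatorname{supp}(\mu_\epsilon)\subset A_\epsilon := A+[-\epsilon,\epsilon]$.

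For the first moment, Proposition~\ref{LTlimit}(a) applied with $h$ equal to the smooth, compactly supported density of $\mu_\epsilon$ gives
\[\lim_{\lambda\to\infty} E^X_{X_0}[J^\lambda(\mu_\epsilon)] = \int \bar f_t(x)\mu_\epsilon(x)\,dx,\]
where after the change of variables $x=w_0+\sqrt t z$ the continuous density is
\[\bar f_t(x) = C_{\ref{LTlimit}} \exp\Bigl(-t^{-1}\!\int F((x-x_0)/\sqrt t)\,dX_0(x_0)\Bigr)\int \psi_0((x-w_0)/\sqrt t)\, p_t(x-w_0)\,dX_0(w_0).\]
Using $F(0)<2$ (Lemma~\ref{Fprop}(c)), positivity and continuity of $\psi_0$, and the hypotheses $X_0(1)\le K$, $X_0([-K,K])\ge K^{-1}$, $t\in[K^{-1},K]$, one obtains $\bar f_t(x)\ge c_0(K)>0$ for $|x|\le K$. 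Hence $\int\bar f_t\,d\mu\ge c_0(K)$, and by continuity of $\bar f_t$, $\int\bar f_t\mu_\epsilon\,dx\ge c_0(K)/2$ for $\epsilon\le\epsilon_0(K)$; thus $E[J^\lambda(\mu_\epsilon)]\ge c_0(K)/4$ for $\lambda\ge \lambda^*(K,\epsilon)$. The second moment is controlled directly by Proposition~\ref{secmombnd}(a) applied to $\mu_\epsilon\otimes\mu_\epsilon$ (valid for $\lambda^2\ge 9/t$) together with part (b) and the hypotheses on $X_0$ and $t$:
\[E[J^\lambda(\mu_\epsilon)^2] \le C_1(K)\bigl(\langle\mu_\epsilon,\mu_\epsilon\rangle_{g_{2\lambda_0-1}}+1\bigr) \le C_2(K)\bigl(I(g_{2\lambda_0-1})(A)+1\bigr).\]

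The link between $J^\lambda$ and $BZ_t$ is that $J^\lambda(\mu_\epsilon)\to 0$ as $\lambda\to\infty$ on the event $\{A_\epsilon\cap BZ_t=\emptyset\}$: on this event each $x\in A_\epsilon$ with $X(t,x)=0$ has an open neighborhood on which $X_t\equiv 0$, so by compactness of $A_\epsilon$ the complementary piece lies in $\{X(t,\cdot)\ge\varepsilon\}$ for some random $\varepsilon>0$; the integrand then vanishes on the first piece and is bounded by $\|X(t,\cdot)\|_\infty e^{-\lambda\varepsilon}$ on the second. The one-sided Paley-Zygmund inequality at each $\lambda\ge\lambda^*(K,\epsilon)$ gives $P(J^\lambda(\mu_\epsilon)>c_0(K)/16)\ge \gamma(K)/(I(g_{2\lambda_0-1})(A)+1)$, so the decreasing events $B_n^\epsilon:=\{\sup_{\lambda\ge n}J^\lambda(\mu_\epsilon)>c_0(K)/16\}$ satisfy $P(\bigcap_n B_n^\epsilon)\ge\gamma(K)/(I(g_{2\lambda_0-1})(A)+1)$ and $\bigcap_n B_n^\epsilon\subset\{A_\epsilon\cap BZ_t\neq\emptyset\}$ by the vanishing. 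Since $A$ is compact and $BZ_t$ is closed, $\{A_\epsilon\cap BZ_t\neq\emptyset\}\downarrow\{A\cap BZ_t\neq\emptyset\}$ as $\epsilon\downarrow 0$, and the trivial lower bound $I(g_{2\lambda_0-1})(A)\ge(2K)^{-(2\lambda_0-1)}$ valid for $A\subset[-K,K]$ absorbs the $+1$ into the energy, producing $P(A\cap BZ_t\neq\emptyset)\ge C_{\ref{capcond}}(K)\, C(g_{2\lambda_0-1})(A)$; replacing the constant by $\inf_{K'\le K}$ of itself enforces monotonicity.

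The main technical input is Proposition~\ref{secmombnd}, whose diagonal singularity of order $|z_1-z_2|^{1-2\lambda_0}$ pairs exactly with the Riesz energy of the $g_{2\lambda_0-1}$-equilibrium measure; modulo this, the argument is a standard Frostman capacity computation combined with the $\limsup$/vanishing mechanism. The mollification $\mu_\epsilon$ is used solely to sidestep proving pointwise convergence of the first-moment density $(\lambda^2 t)^{\lambda_0}E[X(t,x)e^{-\lambda X(t,x)}]\to\bar f_t(x)$ for general (possibly singular) $\mu$.
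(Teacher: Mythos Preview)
Your proof is correct and follows a genuinely different route from the paper's argument. Both proofs are Frostman-type second-moment arguments whose decisive technical input is Proposition~\ref{secmombnd}, but they differ in how the equilibrium measure is fed into the second-moment machine and in which first-moment estimate is invoked.

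The paper discretizes: it selects $N$ points $x_1,\dots,x_N$ in a compact $F\subset A$ approximating the energy $I(g_{2\lambda_0-1})(F)$, introduces the approximating sets $BZ(\varepsilon)=\{x:k_1\varepsilon\le X(t,x)\le k_2\varepsilon\}$, and bounds $P(F\cap BZ(\varepsilon)\neq\emptyset)$ from below by inclusion--exclusion over the events $\{x_i\in BZ(\varepsilon)\}$. Its first-moment input is the left-tail estimate Theorem~\ref{SBMdensity}, and its second-moment input is Proposition~\ref{secmombnd} applied via Markov's inequality; an optimal choice $\varepsilon_N\to 0$ then gives a lower bound that persists along a subsequence, forcing $F\cap BZ_t\neq\emptyset$.

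Your approach instead works continuously with the random functional $J^\lambda$ (essentially the approximate boundary local time $L^\lambda_t$ defined in \eqref{LTdef}) tested against a mollified equilibrium measure. Proposition~\ref{LTlimit}(a) yields the first-moment lower bound directly, without the detour through the Tauberian theorem and Theorem~\ref{SBMdensity}; Paley--Zygmund replaces inclusion--exclusion; and the vanishing of $J^\lambda(\mu_\epsilon)$ on $\{A_\epsilon\cap BZ_t=\emptyset\}$ (your compactness argument that $A_\epsilon\cap\{X(t,\cdot)>0\}$ is closed in $A_\epsilon$ when $A_\epsilon\cap BZ_t=\emptyset$ is correct) plays the role of the paper's passage from $BZ(\varepsilon_N)$ to $BZ_t$. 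This is arguably cleaner---it avoids the ad hoc parameters $\delta_0,k_1,k_2$ and connects naturally with the conjectured limit $L_t$---at the modest cost of the Riesz-energy Fourier inequality $\langle\mu_\epsilon,\mu_\epsilon\rangle\le\langle\mu,\mu\rangle$ and the mollification step. The paper's version is more elementary in that it uses only finitely many points and needs no such inequality. One cosmetic point: your threshold $\epsilon_0$ and $\lambda^*$ may depend on $X_0,t$ as well as $K$ (through the modulus of continuity of $\bar f_t$ and the speed of convergence in Proposition~\ref{LTlimit}(a)), but since these thresholds disappear after the $\lambda\to\infty$ and $\epsilon\to 0$ limits, the final constant indeed depends only on $K$.
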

\begin{proof}  Let $0<\delta_0<e^{-1}$, and let $0<k_1<1<k_2$ be the solutions of $\delta_0=k_ie^{-k_i}$. We will choose $\delta_0$ small enough below, noting that as $\delta\downarrow 0$, $k_1(\delta)\downarrow 0$ and $k_2(\delta)\uparrow\infty$. We approximate $BZ_t$ by
\begin{equation*}
BZ(\vep)\equiv\{x: X(t,x)e^{-X(t,x)/\vep}\ge \delta_0\vep\}=\{x:k_1\vep\le X(t,x)\le k_2\vep\},
\end{equation*}
where the second equality is by an elementary calculus argument and we have suppressed dependence on $t>0$.  Now fix $K\ge 1$ and assume $X_0$ and $t$ are is in the statement of the Theorem.  Let $F$ be a compact subset of $[-K,K]$. If $I(A)=I(g_{2\lambda_0-1})(A)$ and $C(A)=C(g_{2\lambda_0-1})(A)$, we may choose $\{x_i^N:1\le i\le N\}\subset F$ so that (suppressing the superscript $N$) as $N\to\infty$,
\begin{equation}\label{capapprox}
I_N\equiv \frac{1}{N(N-1)}\sum_i\sum_{j\neq i}|x_i-x_j|^{-(2\lambda_0-1}\to I(F)=1/C(F).
\end{equation}
By Theorem~\ref{SBMdensity} there are constants $\bar C(K)>\underline C(K)>0$ so that 
for \break
$0<\vep<\vep_0(K)$,
\begin{align*}
P&(k_1\vep\le X(t,x_i)\le k_2\vep)\cr
&\ge [\underline C(K)k_2^{2\lambda_0-1}-\bar C(K)k_1^{2\lambda_0-1}]\vep^{2\lambda_0-1}\cr
&\ge \vep^{2\lambda_0-1},
\end{align*}
where in the last line we have chosen $\delta_0=\delta_0(K)$ sufficiently small so that $k_2$ is very large and $k_1$ is close to $0$. 
Therefore by inclusion-exclusion and Proposition~\ref{secmombnd}, for $\vep<\vep_0(K)$,
\begin{align*}
P^X_{X_0}(F\cap BZ(\vep)\neq \emptyset)&\ge \sum_{i=1}^N P^X_{X_0}(x_j\in BZ(\vep))-{\sum\sum}_{i\neq j}P^X_{X_0}(x_i,x_j\in BZ(\vep))\cr
&\ge N\vep^{2\lambda_0-1}-\sum_{i\neq j}\frac{E^X_{X_0}(X(t,x_i)X(t,x_j)e^{-X(t,x_i)/\vep-X(t,x_j)/\vep})}{(\delta_0\vep)^2}\cr
&\ge N\vep^{2\lambda_0-1}-c'(K)\sum_{i\neq j}(1+|x_i-x_j|^{1-2\lambda_0})\vep^{4\lambda_0-2}\cr
&\ge N\vep^{2\lambda_0-1}-c(K)[N\vep^{2\lambda_0-1}]^2I_N.
\end{align*}
Now choose $\vep_N\rightarrow0$ so that $N\vep_N^{2\lambda_0-1}=\frac{1}{2I_Nc(K)}$. Therefore
\begin{equation*}
P^X_{X_0}(F\cap BZ(\vep_N)\neq \emptyset)\ge \frac{1}{4c(K)I_N}\to(4c(K))^{-1}C(F)\text{ as }N\to\infty.
\end{equation*}
This implies that 
\[P^X_{X_0}(F\cap BZ(\vep_N)\neq \emptyset\text{ infinitely often })\ge (4c(K))^{-1}C(F).\]
An elementary argument shows that the event on the left-hand side implies that $F\cap BZ_t\neq\emptyset$ and so the proof is complete for $A=F$ compact.  Use the inner regularity of capacity to now extend the result to analytic subsets of $[-K,K]$. 
\end{proof}

\begin{cor}\label{polbz} Let $A$ be an analytic set such that $C(g_{2\lambda_0-1})(A)>0$. Then for any non-zero $X_0$ and any $t>0$, $P^X_{X_0}(A\cap BZ_t\neq\emptyset)>0$.
\end{cor}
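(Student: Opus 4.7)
The strategy is to reduce to a compact subset of $A$ via inner regularity of capacity, and then invoke the Markov property at a small intermediate time to fall into the setting of Theorem~\ref{capcond}.

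Since $g_{2\lambda_0-1}$-capacity is a Choquet capacity, inner regularity (capacitability of analytic sets) yields a compact $F\subset A$ with $C(g_{2\lambda_0-1})(F)>0$. Fix $K_0\ge 1$ with $F\subset[-K_0,K_0]$ and pick any $t_0\in(0,t)$. By the Markov property of $X$ at time $t_0$,
$$P^X_{X_0}(A\cap BZ_t\neq\emptyset)\;\ge\;E^X_{X_0}\Bigl[P^X_{X_{t_0}}(F\cap BZ_{t-t_0}\neq\emptyset)\Bigr],$$
so the task reduces to producing an event of positive $P^X_{X_0}$-probability on which the four hypotheses of Theorem~\ref{capcond} are met by $X_{t_0}$ at time $t-t_0$, for some single deterministic constant $K$.

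Two simple observations build this event. First, since $X_0\neq 0$ and the Gaussian transition density $p_{t_0}$ is everywhere positive, the first-moment formula for super-Brownian motion gives $E^X_{X_0}(X_{t_0}([-K_0,K_0]))=\int\!\int_{[-K_0,K_0]}p_{t_0}(y-x)\,dy\,dX_0(x)>0$, so $P^X_{X_0}(X_{t_0}([-K_0,K_0])>0)>0$ and hence there exists $\eta>0$ with $P^X_{X_0}(X_{t_0}([-K_0,K_0])\ge\eta)>0$. Second, $X_{t_0}(1)<\infty$ almost surely, so taking $M$ large enough the event
$$E\;=\;\{X_{t_0}(1)\le M\}\cap\{X_{t_0}([-K_0,K_0])\ge\eta\}$$
still has positive $P^X_{X_0}$-probability. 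Now choose $K\ge\max\{K_0,\,M,\,1/\eta,\,t-t_0,\,1/(t-t_0),\,1\}$. On $E$ we have $X_{t_0}(1)\le K$, $X_{t_0}([-K,K])\ge\eta\ge 1/K$, $F\subset[-K,K]$, and $t-t_0\in[K^{-1},K]$, so Theorem~\ref{capcond} applies to $X_{t_0}$ and yields
$$P^X_{X_{t_0}}(F\cap BZ_{t-t_0}\neq\emptyset)\;\ge\;C_{\ref{capcond}}(K)\,C(g_{2\lambda_0-1})(F)\quad\text{on }E.$$
Substituting into the Markov decomposition gives $P^X_{X_0}(A\cap BZ_t\neq\emptyset)\ge C_{\ref{capcond}}(K)\,C(g_{2\lambda_0-1})(F)\,P^X_{X_0}(E)>0$, as required.

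The only real subtlety I anticipate is the bookkeeping of arranging a single deterministic $K$ that controls all four conditions of Theorem~\ref{capcond} simultaneously; the substantive inputs (Choquet capacitability for analytic sets and the positivity of the mass $X_{t_0}$ places in bounded intervals with positive probability) are standard.
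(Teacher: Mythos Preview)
Your argument is correct, but it is more elaborate than necessary. The paper's proof applies Theorem~\ref{capcond} directly to the given $X_0$ and $t$, with no Markov step: since $X_0\neq 0$ is a finite measure, $X_0([-K,K])\uparrow X_0(\R)>0$ as $K\to\infty$, so for $K$ large enough one has $X_0([-K,K])\ge 1/K$; taking $K$ also larger than $X_0(1)$, $t$, and $t^{-1}$, and large enough that $C(g_{2\lambda_0-1})(A\cap[-K,K])>0$ (inner regularity), all four hypotheses of Theorem~\ref{capcond} are met by $X_0$ itself. Your detour through the Markov property at time $t_0$ and the positive-probability event $E$ works, but the only reason one might think it is needed is if one worries that $X_0$ could fail $X_0([-K,K])\ge 1/K$ for all $K$---and that cannot happen for a non-zero finite measure. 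So the substantive content is the same; you have simply added an unnecessary conditioning layer (and the preliminary reduction to a compact $F$ is also not needed, since Theorem~\ref{capcond} already allows analytic $A\subset[-K,K]$).
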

\begin{proof} Choose $K$ large enough so that $C(g_{2\lambda_0-1})(A\cap [-K,K])>0$ (inner regularity of capacity), $X_0([-K,K])\ge 1/K$, and $K>t\vee t^{-1}\vee X_0(1)$.  The result is then immediate from the above theorem.
\end{proof}

\begin{lemma}\label{sublem} Let $\alpha=2\lambda_0-1$ and let $Z$ be the subordinator starting at zero with L\'evy measure $\nu$ where 
\[H(x)=\nu([x,\infty))=x^{-\alpha}(\log((1/x)+1))^2.\]
Then
\begin{equation}\label{capsub} C(g_\alpha)(\{Z_s:s\in(0,1)\})>0\text{ a.s. }
\end{equation}
and
\begin{align}\label{subpolar}
&\text{any analytic set } A \text{ satisfying }dim(A)<2-2\lambda_0\text{ is polar for }Z,\cr
&\qquad\qquad\text{ i.e., }P(Z_t\in A\text{ for some }t>0)=0.
\end{align}
\end{lemma}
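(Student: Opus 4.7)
The plan is to derive both claims from Tauberian estimates on the Laplace exponent of $Z$, $\phi(\lambda):=\int_0^\infty(1-e^{-\lambda y})\,\nu(dy)$, combined with standard potential-theoretic characterizations for subordinators (see, e.g., \cite{Hawkes79}). Integration by parts gives $\phi(\lambda)=\lambda\int_0^\infty e^{-\lambda y}H(y)\,dy$, and the substitution $y=z/\lambda$ together with the slow variation of the $\log^2$ factor yields
\[
\phi(\lambda)\sim\Gamma(1-\alpha)\lambda^\alpha(\log\lambda)^2\text{ as }\lambda\to\infty,\qquad\phi(\lambda)=O(\lambda)\text{ as }\lambda\to 0^+,
\]
where the $O(\lambda)$ at the origin uses $\int_0^\infty H(y)\,dy<\infty$ ($\alpha\in(0,1)$ handles $y\downarrow0$ and $H(y)=O(y^{-\alpha-2})$ handles $y\to\infty$).

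For \eqref{capsub}, let $\mu_Z$ be the pushforward of Lebesgue measure on $(0,1)$ under $s\mapsto Z_s$. Since $\nu((0,\infty))=\infty$, $Z$ is strictly increasing a.s., so $\mu_Z$ is a random probability measure supported on $\{Z_s:s\in(0,1)\}$. Using the identity $E[Z_u^{-\alpha}]=\Gamma(\alpha)^{-1}\int_0^\infty\lambda^{\alpha-1}e^{-u\phi(\lambda)}\,d\lambda$ and Fubini,
\[
E\bigl[\langle\mu_Z,\mu_Z\rangle_{g_\alpha}\bigr]=2\int_0^1(1-u)E[Z_u^{-\alpha}]\,du\le\frac{2}{\Gamma(\alpha)}\int_0^\infty\lambda^{\alpha-1}\,\frac{1-e^{-\phi(\lambda)}}{\phi(\lambda)}\,d\lambda.
\]
The asymptotics of $\phi$ make the integrand behave like $\lambda^{\alpha-1}$ near $0$ (integrable since $\alpha>0$) and like $\lambda^{-1}(\log\lambda)^{-2}$ at infinity (also integrable), so the expected energy is finite. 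Hence $\langle\mu_Z,\mu_Z\rangle_{g_\alpha}<\infty$ a.s., and since $\mu_Z$ is a probability measure on the range, $C(g_\alpha)(\{Z_s:s\in(0,1)\})\ge 1/\langle\mu_Z,\mu_Z\rangle_{g_\alpha}>0$ a.s.

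For \eqref{subpolar}, I would invoke the standard characterization for subordinators: an analytic set $A$ is polar for $Z$ iff every probability measure $\mu$ supported on a compact subset of $A$ has infinite $u^1$-energy $\iint u^1(y-x)\,d\mu(x)d\mu(y)$, where the $1$-potential density $u^1$ has Laplace transform $1/(1+\phi(\lambda))$. Karamata's Tauberian theorem applied to $1/(1+\phi)$, combined with the monotonicity of $u^1$ (which holds since $1/(1+\phi)$ is completely monotone for our $\phi$), gives $u^1(x)\asymp x^{\alpha-1}(\log(1/x))^{-2}$ as $x\to0^+$. In particular, for any $\beta\in(0,1-\alpha)$ the logarithmic correction is dominated by the power $|x|^{(1-\alpha)-\beta}$ near $0$, so $u^1(x)\ge c_\beta|x|^{-\beta}$ for all small $|x|$. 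If $\dim A<1-\alpha=2-2\lambda_0$, pick $\beta\in(\dim A,1-\alpha)$; for any probability $\mu$ on a compact $K\subset A$, Frostman's theorem gives $\iint|x-y|^{-\beta}d\mu d\mu=\infty$, and since the off-diagonal contribution $\iint_{|x-y|\ge\delta}|x-y|^{-\beta}d\mu d\mu\le\delta^{-\beta}$ is bounded, the local lower bound on $u^1$ forces $\iint u^1(y-x)d\mu d\mu=\infty$. Thus $K$, and hence $A$ by inner regularity, is polar.

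The main obstacle is upgrading the integrated Karamata asymptotic $\int_0^x u^1(y)dy\sim c\,x^\alpha(\log(1/x))^{-2}$ to the pointwise lower bound on $u^1$; this requires a regularity/monotonicity property of the potential density of a driftless subordinator with infinite Lévy measure, which is standard in Lévy-process potential theory (e.g., Bertoin's monograph). Once this is in hand, the rest of the argument in (b) is a routine combination of Frostman-type estimates with the energy characterization of capacity, and (a) is a direct second-moment computation.
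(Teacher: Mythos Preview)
Your argument for \eqref{capsub} is correct and more direct than the paper's: the paper establishes positive $g_\alpha$-capacity of the range by first identifying its exact Hausdorff measure function (citing \cite{FP71} and Lemma~2.1 of \cite{Hawkes75}) and then invoking \cite{Taylor61} to pass from positive Hausdorff measure to positive capacity. Your occupation-measure energy estimate accomplishes the same thing in a single explicit computation and is a genuine simplification.

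For \eqref{subpolar} there is a gap. Your parenthetical justification for the monotonicity of $u^1$ --- ``which holds since $1/(1+\phi)$ is completely monotone for our $\phi$'' --- is a non sequitur: complete monotonicity of the Laplace transform is automatic for any nonnegative measure and gives no information about pointwise monotonicity of its density. You are right to flag this as ``the main obstacle'' at the end, but deferring to a general reference does not close it; potential densities of subordinators need not be monotone without a structural hypothesis on $\nu$. The paper's route for \eqref{subpolar} is to verify by direct calculus that $(\log H)''\ge 0$, which by Theorem~2.1 of \cite{Hawkes75} is precisely the ``condition~B'' required in Theorem~4.4(iii) of \cite{Hawkes75}; that theorem then reads off \eqref{subpolar} immediately from the asymptotic $t\phi(1/t)\sim c\,t^{1-\alpha}(\log(1/t))^2$. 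If you want to salvage your energy-based route you will end up needing the same log-convexity check --- it is what underlies the regularity of $u^1$ and the clean capacity--polarity correspondence in Hawkes' framework --- so the two arguments ultimately converge at this step.
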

\begin{proof} If 
\[g(\lambda)=\int_0^\infty 1-e^{-\lambda u}\,d\nu(u)=\lambda\int_0^\infty H(u)e^{-\lambda u}\,du,\]
then Karamata's Abelian-Tauberian theorem and a short calculation shows that for some $c>0$,
$\lim_{\lambda\to\infty}g(\lambda)/(\lambda^\alpha(\log\lambda)^2)=c$.  If $$f(r)=r^\alpha(\log(1/r))^{-2}(\log\log(1/r))^{1-\alpha}$$ and $f-m(A)$ is the $f$-Hausdorff measure of $A$, then \cite{FP71} (see also Lemma~2.1 of \cite{Hawkes75}) shows that 
$$f-m(\{Z_s:s\le t\})=c_Ht\text{ for some positive }c_H.$$  
By \cite{Taylor61}, this implies \eqref{capsub}.  Note that $\lim_{t\to0+}\frac{tg(1/t)}{t^{1-\alpha}\log(1/t)^2}=c$, 
 and so by Theorem~4.4(iii) of \cite{Hawkes75}, \eqref{subpolar} also holds.  (A short calculus argument shows that $(\log H)''\ge 0$ and so by Theorem~2.1 of \cite{Hawkes75}, the condition~B of Theorem~4.4(iii) is valid.)
\end{proof}

Recall that $\text{dim}(B)$ is the Hausdorff dimension of a set $B\subset\R$.
\begin{theorem}\label{dimlb} If $X_0\neq 0$ and $t>0$, then $P^X_{X_0}(\text{dim}(BZ_t)\ge 2-2\lambda_0)>0$. 
\end{theorem}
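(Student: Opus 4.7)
The plan is the standard subordinator trick for establishing Hausdorff dimension lower bounds: take an independent subordinator whose range simultaneously has nontrivial $g_{\alpha}$-capacity (with $\alpha=2\lambda_0-1$) and is polar for sets of Hausdorff dimension smaller than $2-2\lambda_0$, and play these two properties against one another. Both features are already supplied by Lemma~\ref{sublem}, and the hitting machinery has been set up in Corollary~\ref{polbz}.

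First I would let $Z$ be a subordinator with the law described in Lemma~\ref{sublem}, defined on a probability space independent of $X$, and set $R=\{Z_s:s\in(0,1)\}$, which is analytic by right-continuity of $Z$. By \eqref{capsub}, $C(g_\alpha)(R)>0$ almost surely.

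Next, conditioning on $Z$, Corollary~\ref{polbz} applied to the (conditionally deterministic) set $R$ gives $P^X_{X_0}(R\cap BZ_t\neq\emptyset\mid Z)>0$ almost surely, and integrating out $Z$ yields
$$P(R\cap BZ_t\neq\emptyset)>0,$$
where $P$ denotes the joint law of $X$ and $Z$; joint measurability of the hitting event is standard for closed random sets. In the reverse direction, I would condition on $X$, noting that $BZ_t$ is closed (hence analytic) by \eqref{BZdef2}. On the event $\{\dim(BZ_t)<2-2\lambda_0\}$, the polarity statement \eqref{subpolar} forces $P^Z(R\cap BZ_t\neq\emptyset\mid X)=0$. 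Were $\dim(BZ_t)<2-2\lambda_0$ to hold $P^X_{X_0}$-a.s., the joint probability in the display above would vanish; this contradiction gives $P^X_{X_0}(\dim(BZ_t)\geq 2-2\lambda_0)>0$.

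Since the heavy lifting---the Frostman-type capacity lower bound on hitting probabilities in Corollary~\ref{polbz} (which in turn rests on the second moment estimate Proposition~\ref{secmombnd} and the density asymptotics in Theorem~\ref{SBMdensity}) and the subordinator potential theory in Lemma~\ref{sublem}---has all been completed, no genuine obstacle remains at this last step; only the routine Fubini/conditioning bookkeeping for random analytic sets is required.
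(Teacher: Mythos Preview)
Your proposal is correct and follows essentially the same route as the paper: introduce the independent subordinator of Lemma~\ref{sublem}, use \eqref{capsub} together with Corollary~\ref{polbz} (conditioning on $Z$) to get $P(R\cap BZ_t\neq\emptyset)>0$, and then condition on $X$ and invoke \eqref{subpolar} to conclude that $\dim(BZ_t)\ge 2-2\lambda_0$ must occur with positive $P^X_{X_0}$-probability. The paper's wording is a bit terser but the logic is identical.
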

\begin{proof} Let $Z_t$ be as in Lemma~\ref{sublem} and set $F=\{Z_s:s\in(0,1)\}$.  We assume $Z$ is independent of the super-Brownian motion $X$ and so work on the product space $(\Omega,\mathcal{F},P)=(\Omega_X,\F_X,P^X_{X_0})\times(\Omega_Z,\F_Z,P^Z_0)$. By \eqref{capsub} and Corollary~\ref{polbz} we have $P(BZ_t(\omega_1)\cap F(\omega_2)\neq \emptyset)>0$.  This implies that, 
$$P^X_{X_0}(\{\omega_1:P^Z_0(\{\omega_2:F(\omega_2)\cap BZ_t(\omega_1)\neq\emptyset\})>0\})>0.$$  By \eqref{subpolar}, this in turn implies that $P^X_{X_0}(\text{dim}(BZ_t)\ge 2-2\lambda_0)>0$, as required.
\end{proof}

\subsection {Upper Bound on the Hausdorff Dimension}\label{dimubound}

We begin with a classical result on the modulus of continuity of the density $X(t,x),\ t>0$, of super-Brownian motion, the solution of \eqref{SPDE}, where the initial condition is an arbitrary finite measure $X_0$. 
\medskip

\noindent{\bf Notation.} If $(t_i,x_i)\in\R_+\times\R$ for $i=1,2$, let $$d((t_1,x_1),(t_2,x_2))=\sqrt{|t_1-t_2|}+|x_1-x_2|.$$

The following is an easy consequence of Theorem II.4.2 (and its proof) of \cite{per02} and standard consequences of Kolmogorov's continuity criteria.  (One should use the decomposition (III.4.11) in \cite{per02} for $t\ge2t_0$.)

\begin{prop}\label{classmod} If $\xi\in(0,1/2)$, then for any $K\in\N$ there is a $\rho(K,\xi,\omega)>0$ a.s. such that 
\begin{align}\label{classmod2} &\forall(t,x)\in[K^{-1},K]\times[-K,K], \,d((t,x),(t',x'))\le \rho\cr
&\text{ implies }
|X(t',x')-X(t,x)|\le d((t',x'),(t,x))^\xi.
\end{align}
Moreover there is a $\delta_{\ref{classmod}}>0$, depending only on $\xi$, and a constant $C(X_0(1),K,\xi)$ so that 
\begin{equation}\label{phiprob}
P^X_{X_0}(\rho(K,\xi)\le r)\le C(K,X_0(1),\xi)r^{\delta_{\ref{classmod}}}\text{ for all }r>0. \end{equation}
\end{prop}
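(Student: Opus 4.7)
The plan is to write \eqref{SPDE} in mild form,
\begin{equation*}
X(t,x)=\int p_t(x-y)\,X_0(dy)+Z(t,x),\quad Z(t,x)=\int_0^t\!\!\int p_{t-s}(x-y)\sqrt{X(s,y)}\,W(ds,dy),
\end{equation*}
and to treat the two summands separately. On the compact set $[K^{-1},K]\times[-K,K]$ the deterministic summand is jointly $C^\infty$ with all derivatives bounded by constants depending only on $K$ and $X_0(1)$, and so is Lipschitz in $(t,x)$; it is irrelevant to any H\"older modulus of index $\xi<1$. Hence it suffices to establish the quantitative statement for the stochastic part $Z$ alone.

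For $Z$ I would invoke the increment moment estimate encoded in Theorem II.4.2 of \cite{per02} together with the restarted representation (III.4.11) of that reference. The latter decomposes $X_t$ for $t\ge 2t_0$ in terms of $X_{t_0}$, which has an a.s.\ bounded density on compacts; this is what allows one to replace the possibly very peaked initial measure $X_0$ with a density-valued initial condition and so obtain constants that do not degenerate in $X_0$. The output is that for every even integer $p\ge 2$,
\begin{equation*}
E^X_{X_0}\bigl[|X(t',x')-X(t,x)|^p\bigr]\le C(p,K,X_0(1))\,d((t,x),(t',x'))^{p/2},
\end{equation*}
uniformly for $(t,x),(t',x')\in[K^{-1},K]\times[-K,K]$; this is the standard Burkholder--Davis--Gundy square-function computation applied to the white-noise integral defining $Z$, combined with the trivial bound on the deterministic piece.

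Given this moment bound, the conclusion follows from a quantitative Garsia--Rodemich--Rumsey (GRR) argument applied to $X$ on $[K^{-1},K]\times[-K,K]$ equipped with the parabolic metric $d$, in which balls of radius $r$ have Lebesgue volume of order $r^3$. Fix $\xi\in(0,1/2)$ and choose an even integer $p$ with $p>6/(1-2\xi)$, so that $p\xi+3<p/2$. Setting
\begin{equation*}
\Theta=\int\!\!\int_{([K^{-1},K]\times[-K,K])^2} \frac{|X(z)-X(w)|^p}{d(z,w)^{p\xi+3}}\,dz\,dw,
\end{equation*}
the moment bound yields $E^X_{X_0}[\Theta]\le C(\xi,K,X_0(1))<\infty$. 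The GRR inequality then gives $|X(z)-X(w)|\le C\,\Theta^{1/p}\,d(z,w)^\xi$ whenever $d(z,w)\le\rho$, where $\rho$ is an explicit positive power of $1/\Theta$. Markov's inequality applied to $\Theta$ finally yields the desired tail bound $P^X_{X_0}(\rho\le r)\le C(K,X_0(1),\xi)\,r^{\delta_{\ref{classmod}}}$, with $\delta_{\ref{classmod}}>0$ depending only on $p$ and hence only on $\xi$.

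The main obstacle is really just the bookkeeping at the end: one must verify that the tail exponent $\delta_{\ref{classmod}}$ ends up depending only on $\xi$ (through the choice of $p$), while all dependence on $K$ and $X_0(1)$ is absorbed into the multiplicative constant. This is routine once the moment estimate is in hand; the substantive input is the uniform-in-$(t,x)$ moment bound on increments of $X$, which (III.4.11) of \cite{per02} supplies for $t\ge K^{-1}$ by using conditioning at an intermediate time $t_0\in(0,K^{-1}/2)$ to remove the influence of any atomic or peaked behavior in $X_0$.
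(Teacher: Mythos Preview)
Your proposal is correct and matches the paper's own (very brief) treatment: the paper does not give a detailed proof but simply cites Theorem~II.4.2 of \cite{per02} together with the restarted decomposition (III.4.11) for $t\ge 2t_0$, and then invokes ``standard consequences of Kolmogorov's continuity criteria.'' You have fleshed out precisely this route, using GRR in place of Kolmogorov, which is the standard quantitative variant and yields the tail bound \eqref{phiprob} in the same way.

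One small clarification worth making explicit in your write-up: the statement asks for the modulus with constant $1$ (that is, $|X(t',x')-X(t,x)|\le d^\xi$, not $\le C\,d^\xi$), so to extract $\rho$ from the GRR bound $|X(z)-X(w)|\le C\Theta^{1/p}d(z,w)^{\xi'}$ you should run GRR with a slightly larger exponent $\xi'\in(\xi,1/2)$ and then set $\rho=(C\Theta^{1/p})^{-1/(\xi'-\xi)}$; Markov's inequality on $\Theta$ then gives $\delta_{\ref{classmod}}=p(\xi'-\xi)$, which indeed depends only on $\xi$ through the choice of $p$ and $\xi'$. You allude to this (``$\rho$ is an explicit positive power of $1/\Theta$'') but spelling it out removes any doubt about why the exponent depends only on $\xi$.
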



Near the zero set $Z=\{(t,x):X(t,x)=0\}$ one can improve the above modulus since the noise term in \eqref{SPDE} will be mollified.  This idea plays a central role in the pathwise uniqueness arguments in \cite{mps06} and \cite{mp11}. The following result can be derived using the same proof as that of Theorem~2.3 in \cite{mp11} (see also Corollary~4.2 of \cite{mps06}).  There are a few minor changes as these references study the difference of two solutions as opposed to the solution itself.  The minor changes that are required are outlined in Appendix ~\ref{impmoda}. 

\begin{theorem}\label{improvedmod} If $\xi\in(0,1)$, then for any $K\in\N$ there is a $\rho_{\ref{improvedmod}}(K,\xi,\omega)>0$ a.s. such that 
\begin{align}\label{classmod2} &\forall(t,x)\in Z \text{ such that } t\ge K^{-1}, d((t,x),(t',x'))\le \rho_{\ref{improvedmod}}\cr
&\text{ implies }
X(t',x')\le d((t',x'),(t,x))^\xi.
\end{align}
\end{theorem}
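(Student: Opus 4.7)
The plan is to proceed by bootstrapping the H\"older exponent, exploiting the fact that the noise coefficient $\sqrt{X}$ vanishes at zeros of $X$ and therefore contributes much less to the local fluctuations than it does at a typical point. The goal is to construct a strictly increasing sequence $\xi_0<\xi_1<\cdots\to 1$ and a.s.\ positive random radii $\rho_n=\rho_n(K,\xi_n,\omega)$ so that, outside a null set, every $(t,x)\in Z$ with $t\ge K^{-1}$ satisfies $X(t',x')\le d((t',x'),(t,x))^{\xi_n}$ whenever $d((t',x'),(t,x))\le \rho_n$. Given the target $\xi\in(0,1)$, only finitely many iterations are needed, so $\rho_{\ref{improvedmod}}$ can be taken to be $\rho_N$ for the first $N$ with $\xi_N\ge\xi$.

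The base case $\xi_0\in(0,1/2)$ follows directly from Proposition~\ref{classmod} applied at $(t,x)\in Z$, where $X(t,x)=0$. For the inductive step, fix $(t,x)\in Z$ and a scale $r\in(0,\rho_n]$, set $t_0=t-r^2$ (handling the boundary case $t_0<K^{-1}/2$ separately by choosing a smaller $\rho_n$), and use the mild form
\[
X(t',x')=\int p_{t'-t_0}(x'-y)X(t_0,y)\,dy+\int_{t_0}^{t'}\!\!\int p_{t'-s}(x'-y)\sqrt{X(s,y)}\,W(ds,dy)
\]
on the parabolic ball of radius $r$ around $(t,x)$. Subtracting the analogous identity at $(t,x)$, whose left-hand side is zero, produces cancellation in the heat-kernel piece and isolates the two mechanisms responsible for the deviation of $X(t',x')$ from $0$. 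The inductive hypothesis gives $X(s,y)\le (Cr)^{\xi_n}$ on the relevant space-time cylinder, so the stochastic-integral difference has quadratic variation at most of order $r^{1+\xi_n}$; a BDG inequality combined with a Kolmogorov--Chentsov chaining argument applied to $2p$-th moments for large $p$ yields pointwise control of the noise term of order $r^{(1+\xi_n)/2-\eta}$ for any $\eta>0$. The heat-kernel difference is handled by splitting the integration region according to proximity to $x$, using the inductive bound $X(t_0,y)\le (C(r+|y-x|))^{\xi_n}$ inside and the classical modulus of Proposition~\ref{classmod} outside. Setting $\xi_{n+1}=(1+\xi_n)/2-\eta$ with $\eta$ small gives a sequence increasing to $1$.

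The main obstacle is passing from these pointwise high-moment estimates to an a.s.\ modulus that holds uniformly over the \emph{random} set of zeros $Z\cap([K^{-1},K]\times[-K,K])$. The plan is to adapt the dyadic covering argument developed in the proofs of Corollary~4.2 of \cite{mps06} and Theorem~2.3 of \cite{mp11}: partition space-time into dyadic parabolic boxes, apply the moment bound to the event that the improved modulus fails in a box while the appropriate smallness of $X$ holds somewhere in an adjacent box, and sum the resulting probabilities by Chebyshev and Borel--Cantelli across dyadic scales. This delivers the a.s.\ positive random $\rho_{n+1}$. The only genuine adaptation from \cite{mp11} and \cite{mps06}, which treat the difference $u_1-u_2$ of two solutions with noise coefficient of order $|u_1-u_2|^\gamma$, is to substitute the single density $X$ for the role of $u_1-u_2$ and $\sqrt{X}$ for the role of the coefficient; since the moment estimates only use the vanishing rate of the coefficient at zero, the substitution is mechanical, and the precise verification is deferred to Appendix~\ref{impmoda}.
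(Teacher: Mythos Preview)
Your overall strategy---bootstrap the H\"older exponent via $\xi_{n+1}=(1+\xi_n)/2-\eta$ by exploiting the vanishing of the noise coefficient at zeros, then apply a dyadic Borel--Cantelli argument as in \cite{mps06,mp11}---is the right one and is exactly what the paper does. The gap is in your treatment of the deterministic heat-kernel piece when you restart the mild form at $t_0=t-r^2$.

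With that choice of $t_0$, the ``initial data'' is $X(t_0,\cdot)$, and the inductive hypothesis gives only the pointwise bound $X(t_0,y)\le C(r+|y-x|)^{\xi_n}$ near $x$. A direct computation (rescale $y=x+rz$) shows that both $h(t',x')=\int p_{t'-t_0}(x'-y)X(t_0,y)\,dy$ and the increment $h(t',x')-h(t,x)$ are then at best $O(r^{\xi_n})$; the classical modulus on $X(t_0,\cdot)$ gives only $O(r^\alpha)$ with $\alpha<1/2$. Neither reaches $(1+\xi_n)/2$, so the heat-kernel term bottlenecks the iteration at $\xi_n$ and no improvement occurs. The ``cancellation'' you invoke from subtracting the identity at $(t,x)$ does eliminate $X(t,x)$ but does not improve the regularity of $h$ itself. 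Your claim that the adaptation from \cite{mps06,mp11} is ``mechanical'' thus misses the point: in those references the two solutions share initial data and the $P_tX_0$ term cancels identically, which is exactly what fails here.

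The paper's proof (Appendix~\ref{impmoda}) resolves this by \emph{not} restarting locally. It first assumes $X_0\in C_0^{1,+}$ and uses the mild form from time $0$, so that $P_tX_0$ is globally $d$-Lipschitz and contributes $O(r)$, safely above every $\xi_{n+1}<1$; the \cite{mps06} iteration is then applied to the stochastic integral term alone. The extension to an arbitrary finite initial measure is done in a second step via the Markov property at time $(2K)^{-1}$ together with the mutual absolute continuity of super-Brownian laws at positive times (\cite{EP91}). This two-stage reduction is the additional idea your sketch is missing.
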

In order to get a good cover of $BZ_t$ we need a version of our low density bound
Theorem~\ref{SBMdensity}(a) for small intervals.  (Set $M=1$ and $\vep=a$ in the following result to compare.)

\begin{theorem}\label{lowdensint} There is a $C_{\ref{lowdensint}}>0$ so that for all $t>0$, $M\ge 1$, $0<\vep\le \sqrt t$, $x\in \R$, and $X_0$, 
\[P_{X_0}(0<X_t([x,x+\vep])\le \vep^2 M)\le C_{\ref{lowdensint}} X_0(1)t^{-(1/2)-\lambda_0}M^{19}\vep^{2\lambda_0-1}.\]
\end{theorem}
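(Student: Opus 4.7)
The plan is to mimic the proof of Theorem~\ref{SBMdensity}(a) with the random variable $X_t([x,x+\vep])$ in place of the density $X(t,x)$. The first step is a scaling reduction. By the self-similarity of super-Brownian motion, if we set $s=t/\vep^2\ge 1$, $y_0=x/\vep$, and $\hat X_s(A)=\vep^{-2}X_{\vep^2 s}(\vep A)$, then $\hat X$ is again a solution of \eqref{SPDE} with $\hat X_0(A)=\vep^{-2}X_0(\vep A)$, and $X_t([x,x+\vep])=\vep^2\hat X_s([y_0,y_0+1])$. Writing $I=[y_0,y_0+1]$, this reduces the theorem to the statement
\[P_\mu(0<X_s(I)\le M)\le C\,\mu(\R)\,s^{-1/2-\lambda_0}M^{19}\quad\text{for all }s\ge 1,\ M\ge 1,\]
and any finite initial measure $\mu$. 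Substituting $s=t/\vep^2$ and $\mu(\R)=\vep^{-2}X_0(\R)$ then produces the claimed $X_0(\R)t^{-1/2-\lambda_0}\vep^{2\lambda_0-1}M^{19}$.

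For the reduced problem I would set $U(a)=P_\mu(0<X_s(I)\le a)$ and apply the Tauberian Lemma~\ref{Taub}(a) with exponent $p=2\lambda_0-1$. The Laplace transform satisfies, by \eqref{LLEc} and the inequality $e^{-u}-e^{-v}\le v-u$,
\[\hat U(\lambda)=\exp(-\mu(V^{\lambda,I}_s))-\exp(-\mu(V^{\infty,I}_s))\le \mu(\R)\,\|V^{\infty,I}_s-V^{\lambda,I}_s\|_\infty,\]
where $V^{\lambda,I}_s=V_s(\lambda 1_I)$ and $V^{\infty,I}_s$ is its increasing limit. The crux is then an analogue of Proposition~\ref{Vrate}(a):
\[\sup_z (V^{\infty,I}_s-V^{\lambda,I}_s)(z)\le C\,s^{-1/2-\lambda_0}\lambda^{-(2\lambda_0-1)}\quad\text{for }\lambda\ge 1,\ s\ge 1.\]
I would derive this by running the arguments of Lemma~\ref{LTequation} and Proposition~\ref{LTlimit} with the interval test function $h\equiv 1_I$ together with the identity
\[E_\mu(X_s(I)\,e^{-\lambda X_s(I)})=-\frac{d}{d\lambda}\exp(-\mu(V^{\lambda,I}_s)),\]
and integrating in $\lambda$ (cf.\ Lemma~\ref{prelgen}). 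The Campbell-formula representation converts this to an expectation over a Brownian motion that, after the time-change $Y_s=B(e^s-1)e^{-s/2}$ used in the proof of Lemma~\ref{LTequation}, becomes an Ornstein--Uhlenbeck integrand $\exp(-\int_0^T H_{e^u}(Y_u)\,du)$ of exactly the type controlled by Theorem~\ref{fens}(d) applied to $\phi=F$, so its expectation decays like $e^{-\lambda_0 T}$ with $T=\log(\lambda^2 s)$, yielding the stated rate.

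The Tauberian bound then gives $U(a)\le e\,C\mu(\R)s^{-1/2-\lambda_0}a^{2\lambda_0-1}$ for the good range of $a$. For $M$ of order one this already gives the required estimate with a constant independent of $M$. For $M$ large we use the trivial bound $U(M)\le U(\infty)\le C\mu(\R)s^{-1/2-\lambda_0}$ (which follows from the $\lambda\to 0$ limit of the same estimate) and interpolate: raising $M$ to any fixed large power (here $19$) absorbs the discrepancy between the power behaviour $M^{2\lambda_0-1}$ and the crude constant in the large-$M$ regime; the exponent $19$ is not sharp but is adequate for the covering argument used in Section~\ref{dimubound}.

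The main obstacle is the extension of the singular Ornstein--Uhlenbeck analysis of Section~4 from the point-mass parameter $\lambda\delta_x$ (underlying $V^\lambda$ and the density estimates) to the unit-interval test function $\lambda 1_I$. Once one is in the rescaled $(s,y_0,I)$ picture the interval has unit length, which is macroscopic compared to the natural fluctuation scale at $s=O(1)$, so the "effectively point-mass" behaviour at large $\lambda$ that drives Proposition~\ref{LTlimit} must be recovered via the large-$\lambda$ concentration of the Brownian path near the interval. This is the step that most requires new work; the rest is a careful rerun of the arguments already developed in Sections~2--4.
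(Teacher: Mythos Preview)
Your overall architecture—scaling to a unit interval, bounding the Laplace transform of $X_s(I)$ via a Campbell/Feynman--Kac representation, passing to the Ornstein--Uhlenbeck picture, and invoking Lemma~\ref{Taub}(a)—matches the paper's Section~\ref{5.8}. But two concrete steps are mishandled, and both are precisely where the interval problem differs from the density problem.

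\textbf{The $M^{19}$ is not an interpolation artifact.} After your reduction you need $U(M)=P_\mu(0<X_s(I)\le M)\le C\mu(\R)s^{-1/2-\lambda_0}M^{19}$ for $s\ge1$, $M\ge1$. Your ``trivial bound'' $U(\infty)\le C\mu(\R)s^{-1/2-\lambda_0}$ is false: $U(\infty)=P_\mu(X_s(I)>0)\le \mu(V^{\infty,I}_s)$ is of order $\mu(\R)/s$, which for large $s$ is strictly larger than $\mu(\R)s^{-1/2-\lambda_0}$ (since $\tfrac12+\lambda_0>1$). So no interpolation between the $a\le1$ Tauberian bound and $U(\infty)$ can produce the stated inequality uniformly in $s$. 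In the paper the $M^{19}$ arises instead from Lemma~\ref{Zbnd2}: the interval analogue of $Z_T$, namely $Z^b_T=\exp\bigl(\int_0^T F(Y_s)-H^b(e^s,Y_s)\,ds\bigr)$ with $H^b(u,x)=u\tilde V^{b,1}(u,\sqrt u\,x)$, is \emph{not} uniformly bounded; one only has $Z^b_T\le C(b\wedge1)^{-20}$. After the Campbell formula one integrates in $\lambda'$ from $(\vep M)^{-1}$ to $\infty$, and on the range where $b=\lambda'\vep\in[M^{-1},1]$ this produces $\int_{M^{-1}}^{1}w^{-20}\,dw\sim M^{19}$. The divergence of $Z^b_T$ as $b\downarrow0$ is a genuine new feature absent in the density case (compare Lemma~\ref{prel}(b)).

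\textbf{The $[0,1]$ Brownian integral is not a constant.} In the density proof the factor $E^B_0\bigl(\exp(-\int_0^1 V^1(u,B_u)\,du)\bigr)$ is a fixed number. In the interval problem, after rescaling, the corresponding factor is $E^B_{\mu_1}\bigl(\exp(-\int_0^1\tilde V^{\lambda,1}(u,B_u)\,du)\bigr)$, which depends on $\lambda$ and must decay like $\lambda^{-(1/2+\lambda_0)+\delta_0}$ for the $\lambda'$-integral over $[\ \vep^{-1},\infty)$ to converge. This is Lemma~\ref{smalltbnd}, and its proof is a nontrivial self-referential iteration (split $[0,1]$ into a boundary layer and an interior, rescale the boundary layer back to the same form, and bootstrap). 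Your sketch does not account for this piece at all; saying the integrand is ``of exactly the type controlled by Theorem~\ref{fens}(d)'' covers only the $[1,\infty)$ time range.

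In short: the route you outline is the paper's route, but the two new lemmas (\ref{smalltbnd} and \ref{Zbnd2}) carry the real content, and your substitute for them—the interpolation paragraph—does not work.
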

This will be proved in Section~\ref{5.8} below. We now show how it gives an upper bound on $\text{dim}(BZ_t)$.

\begin{theorem}\label{dimubnd} For all $t>0$, $\text{dim}(BZ_t)\le 2-2\lambda_0$ $P^X_{X_0}$-a.s.
\end{theorem}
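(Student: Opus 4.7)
The plan is a standard covering argument, which in fact yields a bound on the upper box dimension. Fix $\alpha>2-2\lambda_0$ and $K\in\N$; since $BZ_t$ is bounded and $\alpha$ may be chosen arbitrarily close to $2-2\lambda_0$, it suffices to prove $\CH^\alpha(BZ_t\cap[-K,K])=0$ $P^X_{X_0}$-a.s., where $\CH^\alpha$ denotes $\alpha$-dimensional Hausdorff measure. To this end I would partition $[-K,K]$ into disjoint intervals $I_k=[k\vep,(k+1)\vep]$, let $N_\vep$ be the number of these that meet $BZ_t$, and aim for $E[N_\vep]=O(\vep^{-(2-2\lambda_0)-\eta})$ with $\eta>0$ arbitrarily small; this gives the box-dimension bound immediately and, via Markov and Borel--Cantelli along $\vep_n=2^{-n}$, forces $N_{\vep_n}\vep_n^\alpha\to 0$ a.s., hence $\CH^\alpha(BZ_t\cap[-K,K])=0$.

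Next, fix $\xi\in(0,1)$ (to be taken close to $1$) and localise to the high-probability event $A_{\rho_0}=\{\rho_{\ref{improvedmod}}(K,\xi)\ge\rho_0\}$; by Theorem~\ref{improvedmod}, $P(A_{\rho_0})\uparrow 1$ as $\rho_0\downarrow 0$. For $\vep<\rho_0/3$ and any $x\in I_k\cap BZ_t$, set $\tilde I_k=[(k-1)\vep,(k+2)\vep]$. Any $y\in\tilde I_k$ satisfies $|y-x|\le 2\vep$, so the improved modulus applied at the zero-point $(t,x)$ gives $X(t,y)\le (2\vep)^\xi$, while the definition of $BZ_t$ forces $X_t(x-\vep,x+\vep)>0$; together
\[0<X_t(\tilde I_k)\le 3\vep\cdot(2\vep)^\xi=c_1\vep^{1+\xi}=(3\vep)^2 M, \quad M=c_2\vep^{-(1-\xi)}\ge 1.\]
Theorem~\ref{lowdensint} applied to the interval $\tilde I_k$ therefore yields
\[P\bigl(0<X_t(\tilde I_k)\le(3\vep)^2 M\bigr)\le C_{\ref{lowdensint}} X_0(1)\,t^{-\lambda_0-1/2} M^{19}(3\vep)^{2\lambda_0-1}\le c_3\vep^{2\lambda_0-1-19(1-\xi)}.\]

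Summing over the $O(\vep^{-1})$ values of $k$, and using that on $A_{\rho_0}$ the event $\{I_k\cap BZ_t\neq\emptyset\}$ is contained in $\{0<X_t(\tilde I_k)\le c_1\vep^{1+\xi}\}$, I obtain $E[N_\vep\1_{A_{\rho_0}}]\le c_4\vep^{2\lambda_0-2-19(1-\xi)}$, hence $E[N_\vep\vep^\alpha\1_{A_{\rho_0}}]\le c_5\vep^{\alpha-(2-2\lambda_0)-19(1-\xi)}$. Choosing $\xi$ close enough to $1$ that $19(1-\xi)<\alpha-(2-2\lambda_0)$, this bound is summable along $\vep_n=2^{-n}$; Borel--Cantelli then gives $N_{\vep_n}\vep_n^\alpha\to 0$ a.s.\ on $A_{\rho_0}$, whence $\CH^\alpha(BZ_t\cap[-K,K])=0$ on $A_{\rho_0}$. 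Sending $\rho_0\downarrow 0$, $K\to\infty$, and $\alpha\downarrow 2-2\lambda_0$ along a countable sequence completes the proof.

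The principal obstacle to overcome is the large exponent $M^{19}$ produced by Theorem~\ref{lowdensint}: it is absorbed only because Theorem~\ref{improvedmod} permits $\xi$ arbitrarily close to $1$, making the spurious factor $\vep^{-19(1-\xi)}$ negligible relative to the room $\alpha-(2-2\lambda_0)$. Everything else is routine, and the same counting estimate shows that even the upper box-counting dimension of $BZ_t$ is at most $2-2\lambda_0$, as the authors remark.
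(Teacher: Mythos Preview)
Your proposal is correct and follows essentially the same approach as the paper's proof: both use the improved modulus of continuity (Theorem~\ref{improvedmod}) to argue that if $BZ_t$ meets a small interval then $X_t$ on a slightly enlarged interval is positive but of order $\vep^{1+\xi}$, then apply Theorem~\ref{lowdensint} with $M\sim\vep^{-(1-\xi)}$ and choose $\xi$ close to $1$ so that the factor $M^{19}$ is absorbed. Your write-up simply makes the ``standard covering argument'' explicit via Borel--Cantelli along $\vep_n=2^{-n}$, whereas the paper states it in one line; the substance is the same.
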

\begin{proof} By scaling we may take $t=1$.  By translation invariance, it suffices to show 
\begin{equation}\label{localdim}
\text{dim}(BZ_1\cap[0,1])\le 2-2\lambda_0\ P^X_{X_0}-\text{a.s.}
\end{equation}
Fix $\delta>0$ and choose $\xi\in(0,1)$ so that $19(1-\xi)<\delta$.  Let $\rho_{\ref{improvedmod}}(\xi,\omega)$ be as in Theorem~\ref{improvedmod} with $K=2$.
Then by Theorems~\ref{lowdensint} and \ref{improvedmod} for any $x\in[0,1]$ and $0<\vep\le 1$, 
\begin{align*}
P^X_{X_0}([x,x+\vep]\cap BZ_1\neq\emptyset, 3\vep<\rho_{\ref{improvedmod}})&\le P^X_{X_0}(0<X_1((x-\vep,x+2\vep))\le 6\vep^{\xi+1})\cr
&\le CX_0(1)\vep^{2\lambda_0-1+19(\xi-1)}\cr
&\le CX_0(1)\vep^{2\lambda_0-1-\delta}.
\end{align*}
A standard covering argument using intervals of the form $[i/N,(i+1)/N]$ now gives 
$\text{dim}(BZ_1\cap[0,1])\le 2-2\lambda_0+\delta$ and \eqref{localdim} follows.
\end{proof}

Theorem~\ref{dimBZ} is now immediate from Theorems~\ref{dimlb} and \ref{dimubnd}.
\section{Proof of Proposition~\ref{secmombnd}}
\setcounter{equation}{0}
We now consider the proof of Proposition~\ref{secmombnd}. 
As before, $Y$ is the Ornstein-Uhlenbeck process starting with law $\mu$ under $P^Y_\mu$ and we enlarge
this space to include an independent random variable $W_0$ with ``law" $X_0$.  The same convention is in place on the space carrying a standard Brownian motion starting at $0$ under $P^B_0$, and $P'_t$ denotes the Brownian semigroup.  We fix
a pair of bounded non-negative continuous functions on the line, $\phi_i$, $i=1,2$, and
 define
\begin{equation}\label{psilamdef}
\psi^\lambda(u,x)=E^B_0\Bigl(\phi_2(\lambda^{-1}B_{u\lambda^2}+x)\exp\Bigl\{-\int_0^{\lambda^2 u}V^1(r,B_r)\,dr\Bigr\}\Bigr).
\end{equation}
\begin{lemma}\label{secmom1} There is a constant $C_{\ref{secmom1}}$ so that for all $\lambda_i\ge 0$, $\lambda_i^2t\ge 1$, if $T^i=\log(\lambda_i^2t)$, then
\begin{align}\label{secmom1bnd}E^X_{X_0}\Bigl(&\int\int\phi_1(x_1)\phi(x_2)\exp(-\lambda_1 X_t(x_1)-\lambda_2 X_t(x_2))dX_t(x_1)dX_t(x_2)\Bigr)\cr
\le& C_{\ref{secmom1}}\prod_{i=1}^2E^Y_m(\phi_i(W_0+\sqrt t Y_{T^i})1(\rho>T^i))\cr
&\ +\int_0^tE_0^B\Bigl(\int \phi_1(W_0+B_t)\exp\Bigl(-\int_0^tV^{\lambda_1}_{t-r}(B_t-B_r)\,dr\Bigr)\cr
&\phantom{\le \int_0^tE_0^B\Bigl(\int \phi_1(W_0+B_t)\exp}\times\psi^{\lambda_2}(t-s,W_0+B_s)\Bigr)\,ds.
\end{align}
\end{lemma}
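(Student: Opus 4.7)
The plan is to derive \eqref{secmom1bnd} from a two-point analogue of the Campbell measure identity used in Lemma~\ref{LTequation}, i.e., the second-moment version of Theorem~4.1.3 of \cite{DP} applied to super-Brownian motion with exponential killing. I would first approximate $X(t, x_i)$ by the mollified density $X_t * p_\epsilon(x_i)$, apply the two-point Palm formula, and take $\epsilon \downarrow 0$ as in the proof of Lemma~\ref{LTequation} (the dominated-convergence argument using the bound $V(\lambda p_\vep)(t-s, \cdot) \le \lambda(t-s)^{-1/2}$ applies here too). The resulting formula decomposes the left side of \eqref{secmom1bnd} into a \emph{disconnected} term, in which the two tagged ancestral Brownian paths descend from independent ancestors in $X_0$, and a \emph{connected} term, in which they share a common ancestor up to some branching time $t - s$ and then evolve independently for the remaining time $s$; the connected term is integrated against $ds$ over $[0, t]$. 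Each ancestral path carries an exponential weight $\exp(-\int V^{\lambda_i}(t - r, \cdot)\, dr)$ inherited from the nonlinear Laplace identity \eqref{LLEc}, together with a nonnegative initial-condition exponential of the form $\exp(-X_0(V^{(\lambda_1,\lambda_2)}_t))$, which I would uniformly bound by $1$ to obtain an upper bound.

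For the disconnected term the two ancestors $W^1, W^2$ are independent Brownian motions starting with law $X_0$, and the exponential weighting factors over $i = 1, 2$. For each factor I would apply in sequence the chain of substitutions from Lemma~\ref{LTequation}: time reversal of $W^i$; Brownian rescaling $B^i_u = \lambda_i \widehat W^i_{u/\lambda_i^2}$, which absorbs the $V^{\lambda_i}$ weighting into $\int_0^{\lambda_i^2 t} V^1(u, B^i_u)\, du$ via \eqref{pdescale}; and the Ornstein--Uhlenbeck substitution $Y^i_s = B^i(e^s - 1)e^{-s/2}$ starting from $B^i_1 \sim m$. Dropping the nonnegative factor $\exp(-\int_0^1 V^1(u, B^i_u)\, du)$ yields
\[\prod_{i=1}^2 E^Y_m\Bigl(\phi_i(W_0 + \sqrt t\, Y_{T^i}) \exp\Bigl(-\int_0^{T^i} H_{e^s}(Y_s)\, ds\Bigr)\Bigr)\]
with independent copies of $W_0 \sim X_0$ in the two factors. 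Using $\exp(-\int_0^T H_{e^s}(Y_s)\, ds) = Z_T(Y)^{-1} \exp(-\int_0^T F(Y_s)\, ds) \le C_Z \exp(-\int_0^T F(Y_s)\, ds)$ from Lemma~\ref{prel}(b), together with the Feynman--Kac identity $E^Y_\cdot(\,\cdot\, \exp(-\int_0^T F(Y_s)\, ds)) = E^Y_\cdot(\,\cdot\, 1(\rho > T))$, produces a constant times the first (product) term in \eqref{secmom1bnd}.

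For the connected term at branching time $t - s$ I would leave the first particle in unrescaled Brownian form: the ancestral-plus-first-particle Brownian path $W$ runs from $W_0 \sim X_0$ over time $t$ and, after the Campbell reduction and time reversal, directly produces the factor $\phi_1(W_0 + B_t) \exp(-\int_0^t V^{\lambda_1}(t - r, B_t - B_r)\, dr)$. The second, post-branching Brownian path starts from $W_0 + B_{t-s}$ and runs for time $s$ with its own $V^{\lambda_2}$ Laplace weighting; after the $\lambda_2$-rescaling that is already built into \eqref{psilamdef} via \eqref{pdescale}, this inner expectation is exactly $\psi^{\lambda_2}(s, W_0 + B_{t-s})$. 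Reparametrizing $s \mapsto t - s$ in the outer integral converts this into $\psi^{\lambda_2}(t - s, W_0 + B_s)$ and yields the second term of \eqref{secmom1bnd}. Nonnegative cross-exponentials from the initial-condition factor are again dropped to preserve the inequality.

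The main obstacle is cleanly writing down the second-moment Campbell formula with the correct combinatorics of the two $V^{\lambda_i}$ weightings, the single cross-exponential involving both $\lambda_i$ at the initial condition, and (in the connected case) the ancestral versus post-branching portions of the Brownian paths. Once that identity is in place, the rest is essentially a two-fold application of the scaling and Ornstein--Uhlenbeck reductions of Lemma~\ref{LTequation}, followed by the uniform bound $Z_\infty \le C_Z$ of Lemma~\ref{prel}(b); the $\epsilon \downarrow 0$ mollification limit carries through just as in the single-point case.
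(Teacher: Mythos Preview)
Your overall architecture matches the paper's: a ``product'' (disconnected) term plus a ``connected'' term integrated over a branch time, followed by the time-reversal/scaling/OU reductions of Lemma~\ref{LTequation} and the bound $Z_T\le C_Z$ from Lemma~\ref{prel}(b). The paper, however, does not invoke a two-point Palm formula. Instead it works analytically: it introduces the two-point Laplace exponent $V^{\delta,\vec\lambda,\vec x}$ solving \eqref{pde2sing}, shows $\partial V/\partial\lambda_j=U^{(j)}$ via a Feynman--Kac representation \eqref{Ujdef}, and differentiates $e^{-X_0(V)}$ twice to obtain the decomposition \eqref{Texpansion} into $T_1$ (the product) and $-T_2$ (the connected piece). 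This PDE route is self-contained and avoids having to state and justify a bivariate Campbell formula with exponential weighting; your route is more structural but puts the burden on writing that formula down correctly.

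There is a genuine gap in your description of the path weights. In the two-point Palm decomposition the clusters thrown off along \emph{either} ancestral path contribute mass at \emph{both} $x_1$ and $x_2$, so the killing intensity along each path is the \emph{joint} exponent $V^{\vec\lambda,\vec x}_{t-r}$, not the one-point $V^{\lambda_i}_{t-r}$. (In the paper's notation this is exactly the appearance of $V^{\delta,\vec\lambda,\vec x}$ inside $U^{(j)}$ in \eqref{Ujdef}.) Your sentence ``each ancestral path carries an exponential weight $\exp(-\int V^{\lambda_i}(t-r,\cdot)\,dr)$'' is therefore not what the Palm formula gives; it is only obtained after applying the monotonicity
\[
V^{\vec\lambda,\vec x}\ \ge\ V^{\lambda_i,x_i}\qquad(i=1,2),
\]
which is the paper's inequality \eqref{order}, and this step has to be done along the spines as well as at the initial condition. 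The same remark applies in the connected term: along the common ancestral segment and along each post-branching segment the weight is again the joint $V^{\vec\lambda,\vec x}$, and you need \eqref{order} separately there before the factor $\psi^{\lambda_2}$ can appear. Once you insert this monotonicity step, the rest of your outline is correct. (Minor slip: $\exp(-\int_0^T H_{e^s}(Y_s)\,ds)=Z_T\exp(-\int_0^T F(Y_s)\,ds)$, not $Z_T^{-1}$; the inequality you want then follows from $Z_T\le C_Z$.)
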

\begin{proof} For $\lambda_i\ge0$, $x_i\in\R$ (i=1,2) and $\delta>0$ let $\vec{\lambda}=(\lambda_1,\lambda_2)$, $\vec{x}=(x_1,x_2)$ and $V_t(x)=V_t^{\delta,\vec{\lambda},\vec{x}}(x)$ be the unique smooth solution of
\begin{equation}\label{pde2sing}
\frac{\partial V}{\partial t}=\frac{\Delta V}{2}-\frac{V^2}{2},\quad V_0(\cdot)=\lambda_1p_\delta(\cdot-x_1)+\lambda_2p_\delta(\cdot-x_2),
\end{equation}
so that 
\begin{equation}\label{LLE2sing} E^X_{X_0}(e^{-\lambda_1P'_\delta X_t(x_1)-\lambda_2P'_\delta X_t(x_2)})=\exp(-X_0(V_t))
\end{equation}
 (recall \eqref{semipde} and \eqref{LLEc}).  Let  $U^{(j)}_t(x)\equiv U^{(j),\delta,\vec{\lambda},\vec{x}}_t(x)$ denote the unique solution of 
 \begin{equation}\label{Ujpde}
 \frac{\partial U_t^{(j)}}{\partial t}=\frac{\Delta U_t^{(j)}}{2}-V^{\delta,\vec{\lambda},\vec{x}}_tU^{(j)}_t,\quad U_0^{(j)}(\cdot)=p_\delta(\cdot-x_j),
 \end{equation}
 so that by Feynmann-Kac (see p. 268 of \cite{ks91}),
 \begin{equation}\label{Ujdef}
 U^{(j)}_t(x)=E_x^B\Bigl(p_\delta(B_t-x_j)\exp\Bigl(-\int_0^tV^{\delta,\vec{\lambda},\vec{x}}_{t-s}(B_s)\,ds\Bigr)\Bigr).
 \end{equation}
 Next define
 \begin{equation}\label{tildeVdefn}\tilde V^{\delta,\vec{\lambda},\vec{x}}_t(x)=V_t^{\delta,0,\lambda_2,\vec{x}}(x)+\int_0^{\lambda_1} U^{(1),\delta,\lambda,\lambda_2,\vec{x}}_t(x)\,d\lambda.
 \end{equation}
In the above it is easy to justify differentiation with respect to $t$ and $x$ through the integral and so by \eqref{Ujpde} we have
\begin{equation}\label{pdetildeV1}
\frac{\partial \tilde V}{\partial t}-\frac{\Delta\tilde V}{2}=-\frac{(V^{\delta,0,\lambda_2,\vec{x}}_t)^2}{2}-\int_0^{\lambda_1}V^{\delta,\lambda,\lambda_2,\vec{x}}_t U^{(1),\delta,\lambda,\lambda_2,\vec{x}}_t\,d\lambda,
\end{equation}
and by integration by parts,
\begin{equation}\label{pdetildeV2}
\frac{(\tilde V^{\delta,\vec{\lambda},\vec{x}}_t)^2}{2}=\frac{(V^{\delta,0,\lambda_2,\vec{x}}_t)^2}{2}+\int_0^{\lambda_1}V^{\delta,\lambda,\lambda_2,\vec{x}}_t U^{(1),\delta,\lambda,\lambda_2,\vec{x}}_t\,d\lambda.
\end{equation}
A quick check of the initial condition at $t=0$ and a comparison of \eqref{pdetildeV1} and \eqref{pdetildeV2} show that $\tilde V$ satisfies \eqref{pde2sing} and so $\tilde V=V$. 
Continuity of $V$ in $\vec{\lambda}$ is clear from \eqref{LLE2sing} and hence continuity of 
$U^{(j)}$ in $\vec{\lambda}$ follows from \eqref{Ujdef}. This allows us to differentiate \eqref{tildeVdefn} with respect to $\lambda_1$ and conclude
\begin{equation}\label{Vderiv2}
\frac{\partial V^{\delta,\vec{\lambda},\vec{x}}}{\partial\lambda_1}=U^{(1),\delta,\vec{\lambda},\vec{x}},\hbox{ and symmetrically, }\frac{\partial V^{\delta,\vec{\lambda},\vec{x}}}{\partial\lambda_2}=U^{(2),\delta,\vec{\lambda},\vec{x}}.
\end{equation}

We can differentiate the left-hand side of \eqref{LLE2sing} (set $X_0=\delta_x$) with respect to $\lambda_1$ and then $\lambda_2$ under the integral. This shows that $U^{\delta,\vec{\lambda},\vec{x}}_t(x)=\frac{\partial^2}{\partial\lambda_2\partial\lambda_1}(V^{\delta,\vec{\lambda},\vec{x}})$ exists and is continuous in $\vec{\lambda}$, and
the differentiation yields (use \eqref{Vderiv2})
\begin{align}\label{Texpansion}
E^X_{X_0}\Bigl(&\prod_{i=1}^2(P'_\delta X_t(x_i)e^{-\lambda_iP'_\delta X_t(x_i)})\Bigr)\cr
&=
\exp(-X_0(V^{\delta,\vec{\lambda},\vec{x}}_t))[X_0(U^{(1),\delta,\vec{\lambda},\vec{x}})X_0(U^{(2),\delta,\vec{\lambda},\vec{x}})-X_0(U^{\delta,\vec{\lambda},\vec{x}})]\cr
&\equiv T_1(\delta,\vec{\lambda},\vec{x})-T_2(\delta,\vec{\lambda},\vec{x}).
\end{align}

It is clear from \eqref{LLEc} that if $V^{\delta,\lambda_i,x_i}$ is the solution to \eqref{pde2sing} with initial condition $V_0^{\delta,\lambda_i,x_i}(\cdot)=\lambda_ip_\delta(\cdot-x_i)$, then
\begin{equation}\label{order}
V^{\delta,\vec{\lambda},\vec{x}}\ge V^{\delta,\lambda_i,x_i}\quad\hbox{ for }i=1,2.
\end{equation}
Using the above and \eqref{Ujdef}, we see that
\begin{align*}&\int\int\phi_1(x_1)\phi_2(x_2)T_1(\delta,\vec{\lambda},x_1,x_2)\,dx_1dx_2\cr
&\le\int\int\prod_{i=1}^2\phi_i(x_i)E^B_0\Bigl(p_\delta(W_0+B_t-x_i)\cr
&\phantom{\le\int\int\prod_{i=1}^2\phi_i(x_i)E^B_0\Bigl(p_\delta(}\times\exp\Bigl(-\int_0^t V^{\delta,\lambda_i,x_i}_{t-s}(W_0+B_s)\,ds\Bigr)\Bigr)\,dx_1dx_2\cr
\end{align*}
The above equals
\begin{align}\label{T1expression}
\int\int&\prod_{i=1}^2\phi_i(x_i)E^B_0\Bigl(p_\delta(W_0+B_t-x_i)\cr
&\phantom{\le\int\int\prod_{i=1}^2\phi_i(x_i)E^B_0\Bigl(p_\delta(}\times\exp\Bigl(-\int_0^t V^{\delta,\lambda_i,0}_{t-s}(W_0+B_s-x_i)\,ds\Bigr)\Bigr)\,dx_1dx_2\cr
&=\prod_{i=1}^2E_0^B\Bigl(\phi_i(W_0+B_{t+\delta})\exp\Bigl(-\int_0^t V^{\delta,\lambda_i,0}_{t-s}(B_s-B_{t+\delta})ds\Bigr)\Bigr).
\end{align}
An elementary argument using \eqref{LLEc} shows that 
\begin{equation}\label{deltaVlimit}
\lim_{\delta\downarrow 0}\exp(-V^{\delta,\lambda_i,0}_{t-s}(B_s-B_{t-\delta}))=\exp(-V^{\lambda_i}_{t-s}(B_s-B_t)).
\end{equation}
The elementary bound $V(\phi)(t,x)\le P'_t(\phi)(x)$ for $\phi$ non-negative, bounded and measurable shows that 
\begin{equation}\label{Vdeltabound}
V^{\delta,\lambda_i,0}_{t-s}(B_s-B_{t+\delta})\le P'_{t-s}(\lambda_ip_\delta)(B_s-B_{t+\delta})\le \lambda_i(t-s)^{-1/2}.
\end{equation}
The above two results allows us to apply Dominated Convergence in \eqref{T1expression} and conclude that
\begin{align*}&\limsup_{\delta\downarrow0}\int\int \phi_1(x_1)\phi_2(x_2)T_1(\delta,\vec{\lambda},x_1,x_2)\,dx_1dx_2\cr
&\le\prod_{i=1}^2E^B_0\Bigl(\phi_i(W_0+B_t)\exp\Bigl(-\int_0^t V^{\lambda_i}(t-s,B_t-B_s)ds\Bigl)\Bigl)\cr
&=\prod_{i=1}^2E^B_0\Bigl(\phi_i(W_0+B_t)\exp\Bigl(-\int_0^t V^{\lambda_i}(s,B_s)ds\Bigl)\Bigl)\hbox{ (time reversal)}\cr
&=\prod_{i=1}^2E^B_0\Bigl(\phi_i(W_0+\lambda_i^{-1}B_{\lambda_i^2t})\exp\Bigl(-\int_0^{\lambda_i^2t} V^{1}(u,B_u)du\Bigl)\Bigl)\cr
&\phantom{=\prod_{i=1}^2E^B_0\Bigl(\phi_i(W_0+\lambda^{-1}B_{\lambda^2t})}\hbox{ (by the scaling \eqref{pdescale})}\cr
&\le\prod_{i=1}^2 E_m^B\Bigl(\phi_i(W_0+\lambda_i^{-1}B_{\lambda_i^2t-1})\cr
&\phantom{=\prod_{i=1}^2E^B_0\Bigl(\phi_i}\times \exp\Bigl(-\int_0^{\lambda_i^2t-1}(1+u)V^1(1+u,B_u)\Bigr)(1+u)^{-1}\,du\Bigr),
\end{align*}
where we have used the Markov property at $t=1$ in the last line. Now proceed as in the proof of Proposition~\ref{LTlimit}, using Lemma~\ref{prel}(b), to conclude that
\begin{align}\label{T1limit}
\limsup_{\delta\downarrow0}\int\int \phi_1(x_1)\phi_2(x_2)T_1(\delta,\vec{\lambda},x_1,x_2)\,dx_1dx_2\cr
\le C\prod_{i=1}^2E^Y_m(\phi_i(W_0+\sqrt tY_{T^i})1(\rho>T^i)).
\end{align}

Consider next the contribution from $T_2$ in \eqref{Texpansion}. By \eqref{Vderiv2} and \eqref{Ujdef} we have
\[{\partial\over \partial\lambda_2}V^{\delta,\vec{\lambda},\vec{x}}_{t-s}(x)\le E^B_x(p_\delta(B_{t-s}-x_2))\le (t-s)^{-1/2}.\]
As the above bound is Lebesgue integrable, the Dominated Convergence Theorem and \eqref{Vderiv2} give
\[\frac{\partial}{\partial\lambda_2}\int_0^tV^{\delta,\vec{\lambda},\vec{x}}_{t-s}(B_s)ds=\int_0^tU^{(2),\delta,\vec{\lambda},\vec{x}}_{t-s}(B_s)ds.\]
This in turn allows us to differentiate \eqref{Ujdef} (with $j=1$) with respect to $\lambda_2$ and conclude
\begin{align*} U^{\delta,\vec{\lambda},\vec{x}}_t(x)=-E^B_x\Bigl(p_\delta(B_t-x_1&)\exp\Bigl(-\int_0^t V^{\delta,\vec{\lambda},\vec{x}}_{t-r}(B_r)dr\Bigr)\cr
&\times\int_0^tU^{(2),\delta,\vec{\lambda},\vec{x}}_{t-s}(B_s)ds\Bigr).
\end{align*}
Therefore $-T_2(\delta,\vec{\lambda},\vec{x})\ge 0$ and 
\begin{align*}
-&\int\int T_2(\delta,\vec{\lambda},\vec{x})\phi_1(x_1)\phi_2(x_2)dx_1dx_2\cr
&\le\int\int E_0^B\Bigl(p_\delta(W_0+B_t-x_1)\exp\Bigl(-\int_0^t V^{\delta,\vec{\lambda},\vec{x}}_{t-r}(W_0+B_r)dr\Bigr)\phi_1(x_1)\cr
&\phantom{\int\int E_0^B\Bigl(p_\delta}\times\int_0^tU^{(2),\delta,\vec{\lambda},\vec{x}}_{t-s}(W_0+B_s)ds\Bigr)\phi_2(x_2)dx_1dx_2\Bigr)\cr
&\le \int_0^tE_0^B\Bigl(\int p_\delta(W_0+B_t-x_1)\exp\Bigl(-\int_0^t V^{\delta,\lambda_1,x_1}_{t-r}(W_0+B_r)dr\Bigr)\phi_1(x_1)\cr
&\phantom{}\times\int E^{\hat B}_{B(s)+W_0}\Bigl(p_\delta(\hat B_{t-s}-x_2)\exp\Bigl(-\int_0^{t-s}V^{\delta,\lambda_2,x_2}_{t-s-u}(\hat B_u)du\Bigr)\Bigr)\phi_2(x_2)dx_2dx_1\Bigr)ds.
\end{align*}
In the last line $\hat B$ is a Brownian motion and we have used \eqref{order} and \eqref{Ujdef}.  The above equals
\begin{align*}
&\int_0^tE_0^B\Bigl(\phi_1(W_0+B_{t+\delta})\exp\Bigl(-\int_0^t V^{\delta,\lambda_1,0}_{t-r}(B_r-B_{t+\delta})dr\Bigr)\cr
&\phantom{\int_0^tE_0^B\Bigl(}\times E^{\hat B}_{B_s+W_0}\Bigl(\phi_2(\hat B_{t-s+\delta})\exp\Bigl(-\int_0^{t-s} V^{\delta,\lambda_2,0}_{t-s-u}(\hat B_u-\hat B_{t-s})du\Bigr)\Bigr)\Bigr)ds\cr
&\rightarrow\int_0^tE_0^B\Bigl(\phi_1(W_0+B_t)\exp\Bigl(-\int_0^t V^{\lambda_1}_{t-r}(B_t-B_r)dr\Bigr)\cr
&\phantom{\int_0^tE_0^B\Bigl(}\times E^{\hat B}_{B_s+W_0}\Bigl(\phi_2(\hat B_{t-s})\exp\Bigl(-\int_0^{t-s} V^{\lambda_2}_{t-s-u}(\hat B_{t-s}-\hat B_u)du\Bigr)\Bigr)\Bigr)ds,
\end{align*}
as $\delta\downarrow0$.  Here we used Dominated Convergence and \eqref{Vdeltabound} as in \eqref{deltaVlimit}.  If $B'_u=\hat B_{t-s}-\hat B_{t-s-u}$ (a Brownian motion starting at $0$), then the above shows that 
\begin{align}\label{T2limit}
&\limsup_{\delta\downarrow 0}\ \left[-\int\int T_2(\delta,\vec{\lambda},\vec{x})\phi_1(x_1)\phi_2(x_2)dx_1dx_2\right]\cr
&\le \int_0^tE_0^B\Bigl(\phi_1(W_0+B_t)\exp\Bigl(-\int_0^tV^{\lambda_1}_{t-r}(B_t-B_r)dr\Bigr)\cr
&\phantom{\le \int_0^tE_0^B\Bigl(}\times E^{B'}_{B_s+W_0}(\phi_2(W_0+B_s+B'_{t-s})\exp\Bigl(-\int_0^{t-s}V^{\lambda_2}_r(B'_r)dr\Bigr)\Bigr)\Bigr)ds\cr
&= \int_0^tE_0^B\Bigl(\phi_1(W_0+B_t)\exp\Bigl(-\int_0^tV^{\lambda_1}(t-r,B_t-B_r)dr\Bigr)\cr
&\phantom{\int_0^tE_0^B\Bigl(\phi_1(W_0+}\times\psi^{\lambda_2}(t-s,W_0+B_s)\Bigr)ds.
\end{align}
The last line is an easy consequence of the scaling relation \eqref{pdescale}. Now use \eqref{Texpansion} and Fatou's lemma to see that
\begin{align*}E^X_{X_0}&\Bigl(\int\int\phi_1(x_1)\phi_2(x_2)X_t(x_1)X_t(x_2)e^{-\lambda_1 X_t(x_1)-\lambda_2 X_t(x_2)}dx_1dx_2\Bigr)\cr
&\le\liminf_{\delta\downarrow 0}\Bigl[\int\int\prod_{i=1}^2\phi_i(x_i) T_1(\delta,\vec{\lambda},\vec{x})dx_1dx_2
\cr
&\phantom{\le\liminf_{\delta\downarrow 0}\int\int}+\int\int\prod_{i=1}^2\phi_i(x_i)(-T_2(\delta,\vec{\lambda},\vec{x}))dx_1dx_2\Bigr].\cr
\end{align*}
Finally apply \eqref{T1limit} and \eqref{T2limit} to bound the above by the required expression.
 \end{proof}
\noindent{\bf Proof of Proposition~\ref{secmombnd}}\label{5.1}. Let $T_1$ and $T_2$ denote the first and second terms, respectively on the right-hand side of \eqref{secmom1bnd}, where $t$ and $\lambda=\lambda_1=\lambda_2$ are fixed as in Proposition~\ref{secmombnd}, and let $T=\log(\lambda^2 t)$.  Consider first the much easier $T_1$. Recall from \eqref{rhotail}-\eqref{psi0bnd} we have for any $\delta>0$,
\begin{equation}\label{extbnd} P^Y_x(\rho>T)\le c_\delta e^{\delta x^2}e^{-\lambda_0 T}.
\end{equation}
Therefore 
\begin{align*} \lambda^{2\lambda_0}&E^Y_m(\phi_i(W_0+\sqrt t Y_T)1(\rho>T))\cr
&= \lambda^{2\lambda_0}\int\int\int\phi_i(w_0+\sqrt t y)q_T(y_0,y)dm(y_0)dm(y)dX_0(w_0)\cr
&\le  \int\int\phi_i(w_0+\sqrt ty)\lambda^{2\lambda_0}P^Y_y(\rho>T)dm(y)dX_0(w_0)\cr
&\le \int\int\phi_i(w_0+\sqrt t y)t^{-\lambda_0}c_\delta e^{\delta y^2}dm(y) dX_0(w_0),
\end{align*}
the last by \eqref{extbnd}.  Now let $\sigma^2=(1-2\delta)^{-1}$. A simple substitution now shows the above is at most
\[ct^{-\lambda_0}\int\phi_i(x_i) X_0p_{t\sigma^2}(x_i)dx_i.\]
This in turn implies
\begin{equation}\label{T2bound}
\lambda^{4\lambda_0}T_1\le c_{\sigma^2}t^{-2\lambda_0}\int \int \prod_{i=1}^2 [\phi_i(x_i) X_0p_{t \sigma^2}(x_i)]\,dx_1dx_2,
\end{equation}
where here $\sigma^2$ is any number greater than 1. Below we will choose a convenient value of $\sigma^2$ when doing the $T_2$ bound.  

Turning now to $T_2$, we can write $T_2=\int_0^tT_2(s)ds$, where $T_2(s)$ is the integrand on the right-hand side of \eqref{secmom1bnd}. We may replace the Brownian motion $B_t$ with $\lambda^{-1}B_{t\lambda^2}$ (the new $B$ is still a Brownian motion starting at $0$) and use the scaling relation \eqref{pdescale} to conclude after a short and familiar argument that
\begin{align}\label{T1bnd3}
T_2(s)= E_0^B&\Bigl(\phi_1(W_0+\lambda^{-1}B_{t\lambda^2})\psi^\lambda(t-s, W_0+\lambda^{-1}(B_{\lambda^2t}-B_{\lambda^2(t-s)}))\cr
&\times\exp\Bigl(-\int_0^{\lambda^2 t}V^1(r,B_r)\,dr\Bigr)\Bigr).
\end{align}
{\bf Case 1.} Assume $\lambda^2(t-s)>1$.  \hfil\break
Apply the Markov property of $B$ at time $1$ to the right-hand side of \eqref{T1bnd3} and conclude that
\begin{align}\label{T1bnd4}
T_2(s)\le E^B_m\Bigl(\phi_1&(W_0+\lambda^{-1}B_{t\lambda^2-1})\psi^\lambda(t-s,\lambda^{-1}(B_{t\lambda^2-1}-B_{\lambda^2(t-s)-1}))\cr
&\times\exp\Bigl(-\int_0^{\lambda^2t-1}V^1(r+1,B_r)\,dr\Bigr)\Bigr),
\end{align}
where $B$ remains independent of $W_0$.  As before, $Y(u)=B(e^u-1)e^{-u/2}$ is a stationary Ornstein-Uhlenbeck process. If $U=\log(\lambda^2(t-s))$, then arguing as in the proof of Lemma~\ref{LTequation}, we may re-express \eqref{T1bnd4} as
\begin{align}\label{T1bnd4.5}
T_2(s)&\le E^Y_m\Bigl(\phi_1(W_0+\sqrt t Y_T)\psi^\lambda(t-s,W_0+\sqrt t Y_T-\sqrt{t-s}Y_U)\cr
&\qquad\times\exp\Bigl(-\int_0^T H_{e^u}(Y_u)\,du\Bigr)\Bigr)\cr
&\le e^{C_Z}E^Y_m\Bigl(\phi_1(W_0+\sqrt t Y_T)\psi^\lambda(t-s,W_0+\sqrt t Y_T-\sqrt{t-s}Y_U)\cr&\qquad\times\exp\Bigl(-\int_0^T F(Y_u)\,du\Bigr)\Bigr),
\end{align}
where we used Lemma~\ref{prel} in the last inequality.
The above equals
\begin{align}
&e^{C_Z}\int \phi_1(w_0+\sqrt ty_2)\psi^\lambda(t-s,w_0+\sqrt t y_2-\sqrt{t-s}\,y_1)q_U(y_0,y_1)\cr
&\qquad\qquad\times q_{T-U}(y_1,y_2)dm(y_0)dm(y_1)dm(y_2)dX_0(w_0)\cr
&\le C_{\delta_1} \int \phi_1(w_0+\sqrt ty_2)\psi^\lambda(t-s,w_0+\sqrt t y_2-\sqrt{t-s}\,y_1)\cr
&\qquad\qquad\times e^{\delta_1 y^2_1}(t-s)^{-\lambda_0}\lambda^{-2\lambda_0}q_{T-U}(y_1,y_2)dm(y_1)dm(y_2)dX_0(w_0),\label{T1bnd5}
\end{align}
 where in the last line, $1/2>\delta_1>0$, and we used \eqref{extbnd} and the symmetry of $q_U$ in integrating out $y_0$. 

At this point we take a break from the long proof and obtain a bound on $\psi^\lambda$. 
\begin{lemma} \label{psilambnd}For any $\sigma_0^2>1$ there is a $C_{\ref{psilambnd}}(\sigma_0^2)$ such that for all $x$ and all $0\le s <t$, 
\begin{equation*} \psi^\lambda(t-s,x)\le C_{\ref{psilambnd}} \lambda^{-2\lambda_0}(t-s)^{-\lambda_0}P'_{\sigma_0^2(t-s)}\phi_2(x).
\end{equation*}
\end{lemma}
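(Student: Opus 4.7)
The plan is to identify $\psi^\lambda(u,x)$ (writing $u=t-s$), after a standard time change, as a Feynman--Kac expectation for the OU process killed at rate $F$ from Section~\ref{sec:efn}, apply the transition density estimates from Theorem~\ref{fens} to extract the factor $\lambda^{-2\lambda_0}u^{-\lambda_0}$, and then recognize the remaining Gaussian integral as $\sigma_0 P'_{\sigma_0^2 u}\phi_2(x)$ via an elementary substitution. Set $T=\log(\lambda^2 u)$ and treat separately the regimes $T\ge T_0$ and $T<T_0$, where $T_0:=s^*(1/8)$ is chosen once and for all.

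In the large-$T$ regime, since $V^1\ge 0$ I drop $\exp(-\int_0^1 V^1(r,B_r)\,dr)\le 1$. The scaling identity $V^1(r,y)=r^{-1}H_r(y/\sqrt r)$ (immediate from \eqref{pdescale} and \eqref{Hdef}), together with the substitution $r=e^v$ and $Y_v:=e^{-v/2}B_{e^v}$ (a stationary OU process under $P^B_0$ with $Y_0\sim m$), gives
\[\int_1^{\lambda^2 u} V^1(r,B_r)\,dr=\int_0^T H_{e^v}(Y_v)\,dv,\]
and $\lambda^{-1}B_{\lambda^2 u}+x=\sqrt u\,Y_T+x$. Upgrading the potential from $H_{e^v}$ to $F$ via $Z_T\le C_Z$ from Lemma~\ref{prel}(b) yields
\[\psi^\lambda(u,x)\le C_Z\int\!\!\int \phi_2(\sqrt u\,y+x)\,q(T,y_0,y)\,dm(y_0)\,dm(y),\]
where $q$ is the transition density of the killed OU from Theorem~\ref{fens}(c). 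Integrating $y_0$ out term by term via the eigenfunction expansion, the leading contribution is $\theta\psi_0(y)e^{-\lambda_0 T}$; by \eqref{rboundnew} (with $\delta=1/8$) the tail is $O(e^{-\lambda_1 T}e^{y^2/8})$, and combined with $\psi_0(y)\le c_\delta e^{\delta y^2}$ from \eqref{psi0bnd} (valid for any $\delta>0$) this gives
\[\int q(T,y_0,y)\,dm(y_0)\le C_\delta e^{-\lambda_0 T}e^{\delta y^2}\]
for any $\delta\in(0,1/2)$ and $T\ge T_0$. Choosing $\delta=(1-\sigma_0^{-2})/2\in(0,1/2)$, the substitution $y'=\sqrt u\,y+x$ turns $e^{\delta y^2}\,dm(y)$ into $\sigma_0\,p_{\sigma_0^2 u}(y'-x)\,dy'$, so $\int \phi_2(\sqrt u\,y+x)e^{\delta y^2}\,dm(y)=\sigma_0 P'_{\sigma_0^2 u}\phi_2(x)$. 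Since $e^{-\lambda_0 T}=\lambda^{-2\lambda_0}u^{-\lambda_0}$, the desired bound follows.

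For $T<T_0$, the trivial estimate $\psi^\lambda(u,x)\le E^B_0[\phi_2(\lambda^{-1}B_{\lambda^2 u}+x)]=P'_u\phi_2(x)$, combined with the pointwise Gaussian comparison $p_u(z)\le \sigma_0\,p_{\sigma_0^2 u}(z)$ (valid for all $\sigma_0^2>1$), gives $\psi^\lambda(u,x)\le\sigma_0 P'_{\sigma_0^2 u}\phi_2(x)$, and since $(\lambda^2 u)^{-\lambda_0}\ge e^{-\lambda_0 T_0}$ in this regime the constant can be absorbed. The main technical point is preserving the $\delta$-flexibility: taking $\delta$ close to $1/2$ (so $\sigma_0^2$ arbitrarily large) would fail using \eqref{qbound1} directly (which requires $\delta<1/4$), so I instead extract the $\int q(T,y_0,y)\,dm(y_0)$ bound from the leading-order eigenfunction expansion, whose error term only needs a single fixed choice $\delta<1/4$.
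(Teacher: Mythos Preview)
Your proof is correct and follows essentially the same route as the paper's: the OU time change, the bound $Z_T\le C_Z$, and the estimate $P_y(\rho>T)\le C_\delta e^{-\lambda_0 T}e^{\delta y^2}$ (which is exactly \eqref{extbnd}), followed by the substitution identifying the Gaussian integral with $\sigma_0 P'_{\sigma_0^2 u}\phi_2$. The only cosmetic difference is that the paper splits at $\lambda^2(t-s)=1$ (i.e.\ $T=0$) rather than at your $T_0=s^*(1/8)$, and refers back to the derivation of \eqref{T1bnd4.5}--\eqref{T1bnd5} instead of writing the OU step out; your remark about needing the integrated bound on $\int q(T,y_0,y)\,dm(y_0)$ (rather than \eqref{qbound1} pointwise) to reach all $\delta\in(0,1/2)$ is a correct and useful observation that the paper leaves implicit in its appeal to \eqref{extbnd}.
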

\begin{proof} If $\lambda^2(t-s)\le 1$, we can drop the negative exponential in the definition of $\psi^\lambda$ and note that $\lambda^{-2\lambda_0}(t-s)^{-\lambda_0}\ge 1$
to conclude that 
\[\psi^\lambda(t-s,x)\le \lambda^{-2\lambda_0}(t-s)^{-\lambda_0}P'_{t-s}\phi_2(x),\]
from which the required bound follows easily. 

Assume now that $\lambda^2(t-s)>1$.  If $Y$ and $U$ are as above, then the same reasoning leading to \eqref{T1bnd4.5} and \eqref{T1bnd5} above (compare the definition of $\psi^\lambda$ with the right-hand side of \eqref{T1bnd3} without the $\psi^\lambda$) leads to (for any $0<\delta<1/2$)
\begin{align}
\psi^\lambda(t-s,x)&\le CE^Y_m\Bigl(\phi_2(\sqrt{t-s}Y_U+x)\exp\Bigl(-\int_0^UF(Y_u)\,du\Bigr)\Bigr)\cr
&\le C_\delta(t-s)^{-\lambda_0}\lambda^{-2\lambda_0}\int\phi_2(x+\sqrt{t-s}\,y)e^{\delta y^2}dm(y).
\end{align}
An elementary argument now gives the required bound with $\sigma_0^2=(1-2\delta)^{-1}$. 
\end{proof}
Returning to the proof of Proposition~\ref{secmombnd} in Case 1, we use the above lemma 
in \eqref{T1bnd5} and then the substitution $z_1=w_0+\sqrt t y_2$, to obtain
\begin{align}\label{T1bnd6}
T_2(s)&\le C(\sigma_0^2,\delta_1)\lambda^{-4\lambda_0}(t-s)^{-2\lambda_0}\cr
&\quad\times\int\int\int\phi_1(w_0+\sqrt ty_2)
P'_{\sigma^2_0(t-s)}\phi_2(w_0+\sqrt ty_2-\sqrt{t-s}\,y_1)e^{\delta_1 y_1^2}\cr
&\phantom{\le C(\sigma_0^2,\delta_1)\lambda^{-4\lambda_0}(t-s)^{-2\lambda_0}}\quad\times q_{T-U}(y_1,y_2)dm(y_1)dm(y_2)dX_0(w_0)\cr
&\le C(\sigma_0^2,\delta_1)\lambda^{-4\lambda_0}(t-s)^{-2\lambda_0}\int\int\phi_1(z_1)\phi_2(z_2)\cr
&\times\Bigl[\int\int p_{\sigma^2_0(t-s)}(z_2-z_1+\sqrt{t-s}\,y_1)q_{\log(t/t-s)}(y_1,(z_1-w_0)t^{-1/2})\cr
&\qquad\times e^{\delta_1 y_1^2}p_t(z_1-w_0)dm(y_1)dX_0(w_0)\Bigr]dz_1dz_2.
\end{align}
{\bf Case 1a} Assume also $t/2\le s$.\hfil\break
Let $\delta=\frac{\sqrt 2-1}{2}$ for which $e^{-s^*(\delta)}=1/2$ (recall that $s^*(\delta)$ is as in \eqref{s*def}).  Therefore $\log(t/t-s)\ge \log 2=s^*(\delta)$, and so by \eqref{qbound1},
\[q_{\log(t/t-s)}(y_1,(z_1-w_0)t^{-1/2})\le c e^{-\lambda_0\log(t/t-s)}\exp(\delta(y_1^2+(z_1-w_0)^2t^{-1})).\]
If $\delta_1=\delta$ and $\sigma_0^2=(1-4\delta)^{-1}=(3-2\sqrt 2)^{-1}\le 6$, this implies that the expression in square brackets in \eqref{T1bnd6} is at most
\begin{align*}
c&t^{-\lambda_0}(t-s)^{\lambda_0}\int\int p_{\sigma_0^2(t-s)}(z_2-z_1+\sqrt{t-s}\,y_1)e^{2\delta y_1^2}dm(y_1)\cr
&\phantom{t^{-\lambda_0}(t-s)^{\lambda_0}\int\int p}\times\exp(\delta(z_1-w_0)^2/t)p_t(z_1-w_0)dX_0(w_0)\cr
&\le c t^{-\lambda_0}(t-s)^{\lambda_0}\int\int p_{\sigma_0^2(t-s)}(z_2-z_1+w)\exp(-(1-4\delta)w^2/(2(t-s)))\cr
&\qquad\times(2\pi(t-s))^{-1/2}dw\exp(-(1-2\delta)(z_1-w_0)^2/2t)(2\pi t)^{-1/2}dX_0(w_0)\cr
&\le ct^{-\lambda_0}(t-s)^{\lambda_0}\int p_{\sigma_0^2(t-s)}(z_2-z_1+w)p_{\sigma_0^2(t-s)}(w)dw X_0p_{\sigma_0^2 t}(z_1)\cr
&=ct^{-\lambda_0}(t-s)^{\lambda_0}p_{2\sigma_0^2(t-s)}(z_2-z_1)X_0p_{\sigma_0^2 t}(z_1).
\end{align*}
Use the above in \eqref{T1bnd6} to conclude that in Case 1a,
\begin{align}\label{T1bnd6.5}
T_2(s)\le C&\lambda^{-4\lambda_0}(t-s)^{-\lambda_0}t^{-\lambda_0}\cr
&\times\int\phi_1(z_1)\phi_2(z_2)p_{2\sigma_0^2(t-s)}(z_2-z_1)X_0p_{\sigma_0^2 t}(z_1)dz_1dz_2.
\end{align}

\noindent{\bf Case 1b.} Assume $0\le s<t/2\ (\le t-\lambda^{-2})$.\hfil\break
The last inequality is immediate by our hypothesis that $\lambda^2 t\ge 9>2$. 
Return to \eqref{T1bnd6} with the choices of $\delta_1$ and $\sigma_0^2$ made in the previous case and let  $R=\log(t/t-s)$.  Bounding the transition density (with respect to Lebesgue measure) of the killed Ornstein-Uhlenbeck process starting at $y_1$ by the unkilled process and noting the latter has a normal density with mean $y_1 e^{-R/2}$ and variance $1-e^{-R}$, we get
\[ q_R(y_1,z)e^{-z^2/2}(2\pi)^{-1/2}\le\exp\Bigl(\frac{-(z-y_1e^{-R/2})^2}{2(1-e^{-R})}\Bigr)(2\pi)^{-1/2}(1-e^{-R})^{-1/2}.\]
Setting $z=\frac{z_1-w_0}{\sqrt t}$ and simplifying, this becomes 
\begin{equation*}
q_R(y_1,(z_1-w_0)/\sqrt t)p_t(z_1-w_0)\le p_s(z_1-w_0-\sqrt{t-s}y_1).
\end{equation*}
Now use this in \eqref{T1bnd6} to conclude
\begin{align}\label{T1bnd7}
T_2(s)\le &C\lambda^{-4\lambda_0}(t-s)^{-2\lambda_0}\int\int\phi_1(z_1)\phi_2(z_2)\cr
&\times\int\Bigl[\int p_{\sigma_0^2(t-s)}(z_2-z_1+\sqrt{t-s}\,y_1)e^{\delta_1y_1^2}\cr
&\qquad\qquad\times p_s(z_1-w_0-\sqrt{t-s}\,y_1)dm(y_1)\Bigr]dX_0(w_0)dz_1dz_2.
\end{align}
The fact that $\sigma_0^2=(1-4\delta_1)^{-1}>(1-2\delta_1)^{-1}>1$ and a simple substitution shows that the term in square brackets is at most
\begin{equation}f_{s,t}(z_1,z_2,w_0)=\int p_{\sigma_0^2(t-s)}(z_2-z_1+w)p_{\sigma_0^2(t-s)}(w)p_{\sigma_0^2 s}(z_1-w_0-w)\,dw.
\end{equation}

We claim that 
\begin{equation}\label{fbound0}
f_{s,t}(z_1,z_2,w_0)\le cp_{8\sigma_0^2 (t-s)}(z_2-z_1)p_{4\sigma_0^2 t}(z_1-w_0).
\end{equation}
By scaling it suffices to obtain the above for $\sigma_0=1$.  Set $a=z_1-z_2$ and $b=z_1-w_0$, so that 
\[f_{s,t}(z_1,s_2,w_0)=\int p_{t-s}(w-a)p_{t-s}(w)p_s(b-w)dw.\]
A simple calculation (complete the square) shows that 
\[f_{s,t}(z_1,s_2,w_0)=(2\pi)^{-1}(t^2-s^2)^{-1/2}\exp\Bigl(-\frac{a^2t+2b^2(t-s)-2ab(t-s)}{2(t^2-s^2)}\Bigr).\]
Now use $ab\le \alpha a^2+(4\alpha)^{-1}b^2$ with $\alpha=5/16$ to see that
\begin{align*} f_{s,t}(z_1,s_2,w_0)&\le(2\pi)^{-1}(t^2-s^2)^{-1/2} \exp\Bigl(-\frac{a^2(1-2\alpha)}{2(t+s)}\Bigr)\exp\Bigl(-\frac{b^2(2-(2\alpha)^{-1})}{2(t+s)}\Bigr)\cr
&\le  (2\pi)^{-1}(t^2-s^2)^{-1/2}\exp\Bigl(-\frac{a^2(1-2\alpha)}{6(t-s)}\Bigr)\exp\Bigl(-\frac{b^2(2-(2\alpha)^{-1})}{3t}\Bigr)\cr
&\le (2\pi)^{-1}(t-s)^{-1/2} \exp\Bigl(-\frac{a^2}{16(t-s)}\Bigr)t^{-1/2}\exp\Bigl(-\frac{b^2}{8t}\Bigr),\cr
\end{align*}
where we use $s< t/2$ in the next to last line and the value of $\alpha$ in the last line. This completes the proof of \eqref{fbound0}.

Now insert \eqref{fbound0} into \eqref{T1bnd7}, noting that $(t-s)^{-\lambda_0}\le ct^{-\lambda_0}$,  to conclude that in Case 1b,
\begin{align}\label{T1bnd8}T_2(s)&\le C\lambda^{-4\lambda_0}(t-s)^{-\lambda_0}t^{-\lambda_0}\cr
&\qquad\times\int\int\phi_1(z_1)\phi_2(z_2)p_{8\sigma_0^2(t-s)}(z_2-z_1)X_0p_{4\sigma_0^2t}(z_1)\,dz_1dz_2.
\end{align}

\noindent{\bf Case 2.} Assume $\lambda^2(t-s)\le 1$. 
\hfil\break
Use \eqref{T1bnd3}, the Markov property at time $1$ and then argue as in \eqref{T1bnd5} to see that
\begin{align*}
T_2(s)&\le E^B_0\Bigl(E^B_{B_1(\omega)}\Bigl(\phi_1(W_0+\lambda^{-1}B_{\lambda^2t-1})\psi^\lambda(t-s,W_0+\lambda^{-1}(B_{\lambda^2t-1}-B_{\lambda^2(t-s)}(\omega)))\cr
&\qquad\times\exp\Bigl(-\int_0^{\lambda^2t-1}V^1(r+1,B_r)\,dr\Bigr)\Bigr)\Bigr)\cr
&\le C\int\int p_{\lambda^2(t-s)}(x_0)p_{1-\lambda^2(t-s)}(x_1-x_0)\cr
&\qquad\times E^Y_{x_1}\Bigl(\phi_1(W_0+\sqrt t Y_T)\psi^\lambda(t-s,W_0+\sqrt tY_T-x_0\lambda^{-1})\cr
&\qquad\qquad\qquad\times\exp\Bigl(-\int_0^TF(Y_u)du\Bigr)\Bigr) dx_1dx_0.
\end{align*}
By definition, $\psi^\lambda(t-s,x)\le P'_{t-s}\phi_2(x)$, and so arguing as in the derivation of \eqref{T1bnd6} we get
\begin{align}\label{T1bnd9}
T_2(s)&\le C\int\int\phi_1(z_1)\phi_2(z_2)\Bigr[\int\int\int p_{t-s}(z_2-z_1+\frac{x_0}{\lambda})p_{\lambda^2(t-s)}(x_0)\cr
&\phantom{\le C\int\int\phi_1(z_1)\phi_2(z_2)\Bigr[\int}\times p_{1-\lambda^2(t-s)}(x_1-x_0)q_T(x_1,(z_1-w_0)t^{-1/2})\cr
&\phantom{\le C\int\int\phi_1(z_1)\phi_2(z_2)\Bigr[\int}\quad\times p_t(z_1-w_0)dx_1dx_0dX_0(w_0)\Bigr]dz_1dz_2.
\end{align}
Let $g_{s,t}(z_1,z_2)$ denote the expression in square brackets.  A simple calculation shows that our condition $\lambda^2t\ge 9$ implies $T\ge s^*(1/8)$, and so by \eqref{qbound1}, 
\[q_T(x_1,(z_1-w_0)t^{-1/2})\le C(\lambda^2 t)^{-\lambda_0}\exp\Bigl(\frac{1}{8}(x_1^2+\frac{(z_1-x_0)^2}{t})\Bigr).\]
First use this in \eqref{T1bnd9}, and then set $\sigma_1^2=4/3$ and use $1-\lambda^2(t-s)\le 1$ and an easy calculation to obtain
\begin{align*}
g_{s,t}(z_1,z_2)&\le C\lambda^{-2\lambda_0}t^{-\lambda_0}\int\int p_{t-s}(z_2-z_1+(x_0/\lambda))p_{\lambda^2(t-s)}(x_0)\cr
&\phantom{\le C\lambda^{-2\lambda_0}t^{-\lambda_0}}\times p_{1-\lambda^2(t-s)}(x_1-x_0)e^{x_1^2/8}dx_1dx_0p_{\sigma^2t}(z_1-w_0)dX_0(w_0)\cr
&\le C\lambda^{-2\lambda_0}t^{-\lambda_0}\int p_{t-s}(z_2-z_1+(x_0/\lambda))p_{\lambda^2(t-s)}(x_0)\cr
&\phantom{\le C\lambda^{-2\lambda_0}t^{-\lambda_0}\int\int }\times e^{x_0^2/4}dx_0X_0p_{\sigma_1^2t}(z_1).
\end{align*}
If $\sigma^2_2=\frac{\lambda^2(t-s)}{1-\lambda^2(t-s)/2}$, then 
\begin{equation}\label{sigma2}
\lambda^2(t-s)\le \sigma_2^2\le 2\lambda^2(t-s)
\end{equation}
and so 
\[e^{x_0^2/4}p_{\lambda^2(t-s)}(x_0)=p_{\sigma_2^2}(x_0)\sqrt{\frac{\sigma_2^2}{\lambda^2(t-s)}}\le \sqrt{2}p_{\sigma_2^2}(x_0),\]
which in turn implies
\begin{align*}
g_{s,t}(z_1,z_2)&\le C\lambda^{-2\lambda_0}t^{-\lambda_0}\int p_{t-s}(z_2-z_1+(x_0/\lambda))p_{\sigma_2^2}(x_0)dx_0X_0p_{\sigma_1^2}(z_1)\cr
&=C\lambda^{-2\lambda_0}t^{-\lambda_0}p_{(\sigma^2_2/\lambda)+(t-s)}(z_2-z_1)X_0p_{\sigma_1^2}(z_1)\cr
&\le C\lambda^{-2\lambda_0}t^{-\lambda_0}p_{3(t-s)}(z_2-z_1)
X_0p_{\sigma_1^2}(z_1),
\end{align*}
the last by \eqref{sigma2}.  Use this in \eqref{T1bnd9} to see that in Case 2,
\begin{align}\label{T1bnd10}T_2(s)&\le C\lambda^{-2\lambda_0}t^{-\lambda_0}\int\int\phi_1(z_1)\phi_2(z_2)p_{3(t-s)}(z_2-z_1)X_0p_{\sigma_1^2 t}(z_1)dz_1dz_2\cr
&\le C\lambda^{-2\lambda_0}t^{-\lambda_0}\int\int\phi_1(z_1)\phi_2(z_2)p_{2\sigma^2_0(t-s)}(z_2-z_1)X_0p_{\sigma_0^2 t}(z_1)dz_1dz_2,
\end{align}
the last using $3<2\sigma_0^2$ and $\sigma_1^2<\sigma_0^2$.  

Now combine \eqref{T1bnd6.5}, \eqref{T1bnd8}, and \eqref{T1bnd10} to see that \eqref{T1bnd8} holds for all $0< s\le t$ and conclude that
\begin{align*}T_2\le\int&\int\phi_1(z_1)\phi_2(z_2)\cr
&\times\Bigl[C\lambda^{-4\lambda_0}t^{-\lambda_0}\int_0^t(t-s)^{-\lambda_0}p_{8\sigma_0^2(t-s)}(z_1-z_2)X_0p_{4\sigma_0^2 t}(z_1)\,ds\Bigr]dz_1dz_2.
\end{align*}
Combine this with \eqref{T2bound} to see that if 
\[f_{t,\lambda}(z_1,z_2)=\lambda^{4\lambda_0}E(X_t(z_1)X_t(z_2)e^{-\lambda X_t(z_1)-\lambda X_t(z_2)}),\]
then for $\lambda^2t\ge 9$, and $\sigma_0^2$ as above,
\[\int\phi_1(z_1)\phi_2(z_2)f_{t,\lambda}(z_1,z_2)dz_1dz_2\le \int\int \phi_1(z_1)\phi_2(z_2)Ch_{t,X_0}(z_1,z_2)dz_1dz_2.\]
This implies that $f_{t,\lambda}(z)\le Ch_{t,X_0}(z)$ for Lebesgue a.a. $z=(z_1,z_2)$. Note that Fatou's lemma shows that $h_{t,X_0}(\cdot)$ is lower semicontinuous while $f_{t,\lambda}(\cdot)$ is continuous by Dominated Convergence.  Therefore 
$$f_{t,\lambda}(z)\le Ch_{t,X_0}(z)\text{ for all }z,$$
proving (a).  

The first term in the definition of $h_{t,X_0}(z)$ is trivially bounded by \break$Ct^{-2\lambda_0-1}X_0(1)^2$.  A simple substitution in the integral shows that the second term is bounded by 
$$Ct^{-\lambda_0-1/2}X_0(1)|z_1-z_2|^{1-2\lambda_0}.$$
This proves (b), and so the proof of Proposition~\ref{secmombnd} is complete
\qed

\section{Proof of Theorem \ref{lowdensint}} \label{5.8}
\setcounter{equation}{0}

Let $\tilde V^{b,\vep}=V(b1_{[0,\vep]})$ and let $\tilde V^{\infty,\vep}=\lim_{b\to\infty} \tilde V^{b,\vep}$, where the pointwise finite limit exists by monotonicity (from \eqref{LLEc}) and 
the bound
\begin{equation}\label{Vbound}
 \tilde V^{b,\vep}(t,x)\le V(b)(t,x)=\frac{b}{1+(bt/2)}\le \frac{2}{t}.
 \end{equation}
 The scaling relation for $\tilde V^{b,\vep}$, is 
 \begin{equation}\label{scale2vb}
 \text{for each }r>0, \, \tilde V^{b,\vep}(s,x)=r^2\tilde V^{br^{-2},\vep r}(r^2s,rx).
 \end{equation}
 We state two lemmas which will be used to prove Theorem~\ref{lowdensint} and prove them after establishing the theorem. 
 \begin{lemma}\label{smalltbnd} Let $\mu_1$ denote the uniform law on $[0,1]$.  For any $\delta_0>0$ there is a $C_{\ref{smalltbnd}}(\delta_0)>1$ so that for all $\lambda\ge 1$,
 \begin{equation*}E^B_{\mu_1}\Bigl(\exp\Bigl(-\int_0^1 \tilde V^{\lambda,1}(u,B_u)\,du\Bigr)\Bigr)\le C_{\ref{smalltbnd}}\lambda^{-((1/2)+\lambda_0)+\delta_0}.
 \end{equation*}
 \end{lemma}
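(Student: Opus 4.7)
My plan is to prove $I_\lambda := E^B_{\mu_1}\bigl(\exp(-\int_0^1 \tilde V^{\lambda,1}(u,B_u)\,du)\bigr) \le C_{\ref{smalltbnd}}\,\lambda^{-1/2-\lambda_0+\delta_0}$ by rescaling $I_\lambda$ to an Ornstein--Uhlenbeck survival expectation and then applying the spectral decay of Theorem~\ref{fens}(d). I would first apply the scaling relation \eqref{scale2vb} with $r=\sqrt\lambda$: setting $B'_s := \sqrt\lambda\, B_{s/\lambda}$, which is a Brownian motion started from the uniform law on $[0,\sqrt\lambda]$, gives
\[
\int_0^1 \tilde V^{\lambda,1}(u,B_u)\,du \;=\; \int_0^\lambda \tilde V^{1,\sqrt\lambda}(s,B'_s)\,ds.
\]
Next I would introduce the Lamperti-type change $Y_r := e^{-r/2}\,B'_{e^r-1}$ for $r\in[0,T]$ with $T=\log(1+\lambda)$, so $Y$ is an Ornstein--Uhlenbeck process started from $Y_0 \sim$ uniform on $[0,\sqrt\lambda]$; using \eqref{scale2vb} a second time one obtains
\[
\int_0^1 \tilde V^{\lambda,1}(u,B_u)\,du \;=\; \int_0^T \tilde H_\lambda(r,Y_r)\,dr, \qquad \tilde H_\lambda(r,y) := \tilde V^{e^r,\sqrt\lambda e^{-r/2}}(1-e^{-r},y).
\]

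The second step is to show that for $r$ in a suitable subwindow $[r_0,T]$ (with $r_0$ constant), $\tilde H_\lambda(r,y) \ge F(y) - \eta_\lambda$ with $\eta_\lambda$ small. Indeed, as $r\uparrow T$ the initial-condition width $\sqrt\lambda\,e^{-r/2}$ shrinks to $1$ and the height $e^r$ grows to $\lambda$, so by Proposition~\ref{kamin}(a) one has $\tilde H_\lambda(r,y)\to V^\infty(1,y)=F(y)$ pointwise. To extract a quantitative rate I would adapt the Campbell-formula-and-OU-spectral bootstrap of Lemma~\ref{LTequation}--Lemma~\ref{prel}(a) to the smeared initial condition $\lambda \mathbf 1_{[0,1]}$; the resulting error is absorbed into a multiplicative constant once exponentiated.

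The third step is then to estimate the remaining expression, essentially $E^Y_{Y_0}\bigl(\exp(-\int_0^T F(Y_r)\,dr)\bigr)$ with $Y_0$ uniform on $[0,\sqrt\lambda]$. I would split $T=T_1+T_2$ and apply the Markov property at $T_1$. For $r\in[0,T_1]$ the OU mean is $Y_0\,e^{-r/2}$, which sits inside the shrinking window $[0,\sqrt\lambda\,e^{-r/2}]$ where $\tilde H_\lambda(r,\cdot)$ is uniformly bounded below, so the transient phase alone contributes an exponential factor $\sim e^{-cT_1}$; for $r\in[T_1,T]$, conditional on $Y_{T_1}$ being $O(1)$, a direct application of \eqref{rhotail}--\eqref{rbound} yields the spectral factor $\sim e^{-\lambda_0 T_2}$. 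Choosing $T_1, T_2$ so that the transient contribution gives $\lambda^{-1/2+\delta_0/2}$ and the spectral contribution gives $\lambda^{-\lambda_0+\delta_0/2}$, with polylogarithmic factors absorbed into $\lambda^{\delta_0}$, yields the claimed bound.

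The main obstacle is making Step~3 rigorous. The eigenfunction bound $\psi_0(y)\le c_\delta e^{\delta y^2}$ from \eqref{psi0bnd} is useless when $|Y_0|\sim\sqrt\lambda$, so a naive application of Theorem~\ref{fens}(d) produces an $e^{\delta\lambda}$ blow-up. One must instead crucially exploit that $\tilde H_\lambda(r,y)$ (not merely $F(y)$) provides effective killing on the whole shifting interval $[0,\sqrt\lambda\,e^{-r/2}]$, thereby cutting off the otherwise-dangerous large-$Y_0$ contributions at the cost of the $\lambda^{-1/2}$ factor in the final exponent. Quantifying this will likely require either the refined transition-density bound \eqref{qbound1} after waiting until $r\ge s^*(\delta)$ or a Doob $h$-transform for the $F$-killed OU, and careful book-keeping will be needed to ensure the excess relaxation time does not contaminate the subsequent spectral estimate.
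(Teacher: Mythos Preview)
Your approach has a genuine gap in Step~2. You write that as $r\uparrow T$ the width $\sqrt\lambda\,e^{-r/2}$ shrinks to~$1$ and then invoke Proposition~\ref{kamin}(a) to conclude $\tilde H_\lambda(r,y)\to F(y)$. But $F(y)=V^\infty(1,y)$ is the time-one profile for a \emph{point-mass} initial condition, whereas at $r=T$ your $\tilde H_\lambda$ is (essentially) $\tilde V^{\lambda,1}(1,y)$, the time-one profile for the initial datum $\lambda\mathbf 1_{[0,1]}$. As $\lambda\to\infty$ this converges to $\tilde V^{\infty,1}(1,y)$, which is a different function from $F$; it is the ``very singular'' solution for a unit interval, not a point. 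So the killing potential never becomes $F$ on the relevant time window, and the spectral input from Theorem~\ref{fens}(d) (whose eigenvalue $\lambda_0$ is attached to $F$, not to $\tilde V^{\infty,1}(1,\cdot)$) does not apply as you envision. This is not a technicality: it is precisely the difficulty that forces the paper into a multi-scale argument.

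The paper's route is structurally different. It does \emph{not} try to reach width~$0$ in one rescaling. Instead it splits $B_0$ into an interior region $[\lambda^{-\beta},1-\lambda^{-\beta}]$ with $\beta=\tfrac12-\tfrac\gamma4\in(\tfrac18,\tfrac14)$ and a boundary layer $[0,\lambda^{-\beta}]$. On the interior, a direct Feynman--Kac lower bound on $\tilde V^{\lambda,1}$ (no spectral theory at all) already gives a contribution $\lesssim\lambda^{-\gamma+2\varepsilon}$. On the boundary layer, a rescaling by $\lambda^\beta$ (not $\sqrt\lambda$) maps the problem back to one of the same form with $\lambda$ replaced by $\lambda^{1-2\beta}=\lambda^{\gamma/2}$, yielding the recursion
\[
\psi(\lambda)\le C_\varepsilon\lambda^{-\gamma+2\varepsilon}+c_p\,\lambda^{-2\beta\gamma}\,\psi(\lambda^{\gamma/2})^{1/p},
\]
where the factor $\lambda^{-2\beta\gamma}$ comes from the OU spectral decay over the \emph{extra} time $2\beta\log\lambda$ gained by the rescaling (here one uses \eqref{psibound1}, which in turn rests on Lemma~\ref{Zbnd2}; the latter does converge to $F$ because in $H^b(u,x)=\tilde V^{bu,u^{-1/2}}(1,x)$ the width $u^{-1/2}$ genuinely vanishes). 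The recursion is then closed by induction on dyadic $\lambda$-scales. The iterative peeling of boundary layers is exactly what supplies the missing ``width~$\to 0$'' that your single-step scaling cannot provide.
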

 The analogue of $H_u$ in \eqref{Hdef} is (for any $b>0$)
 \begin{equation}\label{Hdef2} H^b(u,x)=u\tilde V^{b,1}(u,\sqrt u x)=\tilde V^{bu,u^{-1/2}}(1,x),
 \end{equation}
 where we used \eqref{scale2vb} with $r=u^{-1/2}$. The latter expression suggests that $\lim_{u\to\infty} H^b(u,x)=V^\infty(1,x)=F(x)$.  Let $Z^b_T=\exp\Bigl(\int_0^T F(Y_s)-H^b(e^s,Y_s)\,ds\Bigr)$. With Lemma~\ref{prel}(b) in mind we have:
 
 \begin{lemma}\label{Zbnd2} There is a  constant $C_{\ref{Zbnd2}}$ such that for all $b,T>0$,
 \[Z^b_T\le C_{\ref{Zbnd2}}(b\wedge 1)^{-20}.\]
 \end{lemma}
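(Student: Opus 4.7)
}

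The bound on $Z^b_T$ is pathwise in $Y$, so the plan is to establish a pointwise estimate on $F(x)-H^b(e^s,x)$ and then integrate over $s\in[0,T]$. Since
\[
\log Z^b_T = \int_0^T F(Y_s)-H^b(e^s,Y_s)\,ds,
\]
and $H^b(u,x)=\tilde V^{bu,u^{-1/2}}(1,x)$, the trivial bound $F(x)-H^b(u,x)\le F(x)\le F(0)<2$ (Lemma~\ref{Fprop}(c)) alone only gives $\log Z^b_T\le 2T$ and does not suffice. The main task is to supplement it with a rate bound that is integrable in $s$ for $be^{s/2}\ge 1$.

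The key technical estimate I would establish is the $\tilde V$-analog of Lemma~\ref{prel}(a):
\begin{equation}\label{keyrate}
\sup_x V^\infty(1,x)-\tilde V^{b,\vep}(1,x)\le C(b\vep)^{1-\delta_{\eqref{del1}}}\quad\text{for all } b\vep\ge 1.
\end{equation}
Applied with $b\vep=be^s\cdot e^{-s/2}=be^{s/2}$, this yields
\(F(Y_s)-H^b(e^s,Y_s)\le C(be^{s/2})^{1-\delta_{\eqref{del1}}}\) whenever $be^{s/2}\ge 1$. To prove \eqref{keyrate} I would use the monotonicity of $V(\phi)$ in $\phi$: pick a smooth bump $g$ supported on $[0,1]$ with $\int g\,dy=1$ and $\|g\|_\infty\le 1$, and set $\phi_{b,\vep}(y):=b\,g((y-\vep/2)/\vep)$ (a bump of mass $b\vep$ supported in $[0,\vep]$ and pointwise $\le b$). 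Then $\tilde V^{b,\vep}(1,x)\ge V(\phi_{b,\vep})(1,x)$, and it suffices to show $V^\infty(1,x)-V(\phi_{b,\vep})(1,x)\le C(b\vep)^{1-\delta_{\eqref{del1}}}$. This would follow by (i) Lemma~\ref{prel}(a) giving the rate for the point-mass initial data $V^\infty-V^{b\vep}\le C(1)(b\vep)^{1-\delta_{\eqref{del1}}}$, combined with (ii) a controlled comparison $|V^{b\vep}(1,x)-V(\phi_{b,\vep})(1,x)|=O(\vep^2)$ for $\vep\le 1$, obtained by linearizing the difference via the Feynman-Kac representation at time $\tau=\vep^2$ and using that both solutions have common total initial mass $b\vep$.

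Given \eqref{keyrate}, the integration is routine. If $b\ge 1$: for every $s\ge 0$ we have $be^{s/2}\ge 1$, so
\[
\int_0^T (F-H^b)(e^s,Y_s)\,ds\le C\int_0^\infty(be^{s/2})^{1-\delta_{\eqref{del1}}}\,ds
=\tfrac{2C}{\delta_{\eqref{del1}}-1}\,b^{1-\delta_{\eqref{del1}}}\le C_1,
\]
yielding $Z^b_T\le e^{C_1}$. If $b<1$, split at $s_b:=2\log(1/b)$. On $[0,s_b]$ use the trivial bound $\le F(0)<2$, giving $\int_0^{s_b}\le 2F(0)\log(1/b)$. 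On $[s_b,T]$ apply \eqref{keyrate}:
\[
\int_{s_b}^\infty C(be^{s/2})^{1-\delta_{\eqref{del1}}}\,ds=\tfrac{2C}{\delta_{\eqref{del1}}-1}\,b^{1-\delta_{\eqref{del1}}}\,e^{s_b(1-\delta_{\eqref{del1}})/2}=\tfrac{2C}{\delta_{\eqref{del1}}-1}.
\]
Combining and exponentiating yields $Z^b_T\le C'\,b^{-2F(0)}\le C'\,b^{-4}$, which together with the $b\ge 1$ case gives $Z^b_T\le C_{\ref{Zbnd2}}(b\wedge 1)^{-4}\le C_{\ref{Zbnd2}}(b\wedge 1)^{-20}$.

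The main obstacle is establishing the rate bound \eqref{keyrate}. The difficulty is that $\tilde V^{b,\vep}$ arises from a smooth bounded initial condition $b\mathbf 1_{[0,\vep]}$, while the known rate result (Lemma~\ref{prel}(a)) compares $V^\infty$ to $V^\lambda$ with a point-mass initial condition. Bridging these two cases requires quantifying the error introduced by the shape difference of the initial condition, which amounts to controlling the nonlinear PDE evolution of the two initial data of equal total mass over a time interval of length one. The Feynman-Kac representation for the linearized difference $w=V(\phi_{b,\vep})-V^{b\vep}$ provides the needed control, but requires careful handling of the small-time behaviour where $V^{b\vep}$ is singular at the origin.
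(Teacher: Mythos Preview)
Your overall architecture---reduce to a pointwise upper bound on $F(x)-H^b(u,x)$ and integrate in $s$, splitting at $s_b=2\log(1/b)$ with the trivial bound $F\le F(0)<2$ on $[0,s_b]$---is exactly the skeleton the paper uses. The divergence is entirely in how the pointwise rate is obtained.

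The paper does not attempt a PDE comparison like your \eqref{keyrate}. Instead it works probabilistically: since $e^{-F(x)}=P_{\delta_x}(X(1,0)=0)$ and $e^{-H^b(u,x)}=E_{\delta_x}\bigl[\exp(-bu\,X_1([0,u^{-1/2}]))\bigr]$, one has
\[
F(x)-H^b(u,x)\le e^2\,E_{\delta_x}\Bigl[\exp\bigl(-bu\,X_1([0,u^{-1/2}])\bigr)\,\mathbf 1(X(1,0)>0)\Bigr].
\]
This expectation is split on $\{0<X(1,0)\le u^{-1/8}\}$ versus $\{X(1,0)>u^{-1/8}\}$. The first piece is handled by Theorem~\ref{SBMdensity}(a) (the left-tail bound already proved), giving $Cu^{-(2\lambda_0-1)/8}$. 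On the second piece the classical modulus of continuity (Proposition~\ref{classmod}) forces $X_1([0,u^{-1/2}])/u^{-1/2}\ge u^{-1/8}/2$ with high probability, yielding $e^2\exp(-\tfrac{b}{2}u^{3/8})$ plus a polynomially decaying error. Integrating the exponential term in $s$ gives $\tfrac{8e^2}{3}\log(2/b)$ for small $b$; since $8e^2/3<20$ one obtains the exponent $-20$.

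Your route, by contrast, would yield the better exponent $-2F(0)<-4$ \emph{if} \eqref{keyrate} were established, but that estimate is where the real gap lies. Two concrete issues:
\begin{itemize}
\item[(i)] The bump $g$ you describe (supported on $[0,1]$, $\int g=1$, $\|g\|_\infty\le 1$, smooth) does not exist; this is cosmetic and easily repaired.
\item[(ii)] The claimed comparison ``$|V^{b\vep}(1,x)-V(\phi_{b,\vep})(1,x)|=O(\vep^2)$'' is not uniform in the right parameters. Even for the linear heat equation the (centered) difference is $O((b\vep)\vep^2)$, not $O(\vep^2)$; in the application $b\vep=be^{s/2}$ grows. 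More seriously, the Feynman--Kac linearization you propose carries the potential $\tfrac12(V^{b\vep}+V(\phi_{b,\vep}))$, and $V^{b\vep}(t,\cdot)$ blows up like $(b\vep)t^{-1/2}$ near $t=0$. Controlling this singular potential uniformly as $b\vep\to\infty$, and showing the nonlinear absorption over $[0,\vep^2]$ does not destroy the linear $O((b\vep)\vep^2)$ rate (note $(b\vep)\vep^2\cdot(\text{height }b)=b^2\vep^3$, which in the application equals the non-decaying constant $b^2$), is not addressed and is not obviously true.
\end{itemize}
The paper's probabilistic argument sidesteps this entirely by recycling Theorem~\ref{SBMdensity}(a) and the modulus of continuity, both already in hand. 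Your approach might be salvageable, and would sharpen the exponent, but as written the comparison step is a plan rather than a proof.
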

 
 \noindent{\bf Proof of Theorem \ref{lowdensint}.}  Let $t,\vep$ and $M$ be as in the Theorem and set $\lambda=(\vep M)^{-1}\ge 1$.  By \eqref{LLEc} and translation invariance,
 \begin{align}
\label{LTboundb}
P^X_{X_0}&(0<X_t([x,x+\vep])/\vep\le \vep M)\cr
&\le e E^X_{X_0}\Bigl(1(X_t([x,x+\vep]>0)\exp(-\lambda X_t([x,x+\vep])/\vep))\Bigr)\cr
&=e\Bigl[\exp\Bigl(-\int \tilde V^{\lambda/\vep,\vep}(t,y-x)dX_0(y)\Bigr)-\exp\Bigl(-\int \tilde V^{\infty,\vep}(t,y-x)dX_0(y)\Bigr)\Bigr].
\end{align}
 Differentiate the last inequality (with $x=0$ and $X_0$ a point mass) with respect to
 $\lambda$, using Dominated Convergence to take the derivative through the integral,
 to conclude for $\lambda'>0$,
 \begin{align*}
 \frac{d}{d\lambda'}\Bigl(&\exp(-\tilde V^{(\lambda'/\vep),\vep}(t,x))-\exp(-\tilde V^{\infty,\vep}(t,x))\Bigr)\cr
 &=-\frac{d}{d\lambda'}E^X_{\delta_x}\Bigl(\exp(-(\lambda'/\vep)X_t([0,\vep]))1(X_t([0,\vep)>0))\Bigr)\cr
 &=-E^X_{\delta_x}\Bigl(\exp(-(\lambda'/\vep)X_t([0,\vep]))X_t([0,\vep])/\vep\Bigr).
 \end{align*}
 Integrate both sides from $\lambda$ to $\infty$ and then integrate out $x$ to conclude (by Fubini)
 \begin{align*} \int &e^{-\tilde V^{\lambda/\vep,\vep}(t,x)}-e^{-\tilde V^{\infty,\vep}(t,x)}\,dx\cr
 &= \int_\lambda^\infty\int E^X_{\delta_x}\Bigl(e^{-(\lambda'/\vep)X_t([0,\vep])}X_t([0,\vep])/\vep\Bigr)\,dx\,d\lambda'.
 \end{align*}
 The bound \eqref{Vbound} and the above show that
 \begin{align}\label{Vintbnd}
 \int&(\tilde V^{\infty,\vep}-\tilde V^{\lambda/\vep,\vep})(t,x)\,dx\cr
 &\le e^{2/t}\int e^{-\tilde V^{\lambda/\vep,\vep}(t,x)}-e^{-\tilde V^{\infty,\vep}(t,x)}\,dx\cr
 &\le e^{2/t}\int_\lambda^\infty\int E^X_{\delta_x}\Bigl(e^{-(\lambda'/\vep)X_t([0,\vep])}X_t([0,\vep])/\vep\Bigr)\,dx\,d\lambda'\cr
 &=e^{2/t}\int_\lambda^\infty\int E^X_{\delta_0}\Bigl(e^{-(\lambda'/\vep)X_t([x,x+\vep])}X_t([x,x+\vep])/\vep\Bigr)\,dx\,d\lambda'.
 \end{align}
Now use the Markov property at time $t/2$ just as in \eqref{Vincrement} and then apply the above to see that
\begin{align*}
e&^{-\tilde V^{\lambda/\vep,\vep}(t,x)}-e^{-\tilde V^{\infty,\vep}(t,x)}\cr
&\le \int p_{t/2}(y-x)(\tilde V^{\infty,\vep}(t/2,y)-\tilde V^{\lambda/\vep,\vep}(t/2,y))\,dy\cr
&\le e^{2/t}t^{-1/2}\int_\lambda^\infty\int E^X_{\delta_0}\Bigl(e^{-(\lambda'/\vep)X_t([y,y+\vep])}X_t([y,y+\vep])/\vep\Bigr)\,dy\,d\lambda',
\end{align*}
and so by \eqref{Vbound}, if $f_{\vep,t}(\lambda')=\int E^X_{\delta_0}\Bigl(e^{-(\lambda'/\vep)X_t([y,y+\vep])}X_t([y,y+\vep])/\vep\Bigr)\,dy$,
then
\begin{equation*}
\sup_x[\tilde V^{\infty,\vep}-\tilde V^{\lambda/\vep,\vep}](t,x)\le c(t)\int_\lambda^\infty f_{\vep,t}(\lambda')\,d\lambda'.
\end{equation*}
Returning to \eqref{LTboundb} we therefore see that (recall $\lambda=(\vep M)^{-1}$)
\begin{align}\label{fbound}
P^X_{X_0}(0<X_t([x,x+\vep])/\vep\le \vep M)&\le eX_0(1)\sup_y[\tilde V^{\infty,\vep}-\tilde V^{\lambda/\vep,\vep}](t,y)\cr
&\le c(t)X_0(1)\int_{(\vep M)^{-1}}^\infty f_{\vep,t}(\lambda')d\lambda'.\end{align}
Let $\mu_\vep$ denote the uniform distribution on $[0,\vep]$. Argue as in the derivation of \eqref{campbell} to see that 
\begin{align}f_{\vep,t}(\lambda')&=E^X_{\delta_0}\Bigl(\int e^{-(\lambda'/\vep)X_t([y,y+\vep])}\int 1_{[0,\vep]}(x-y)X(t,x)dxdy/\vep\Bigr)\cr
&=\vep^{-1}\int_0^\vep E^X_{\delta_0}\Bigl(\int X(t,x)\exp(-(\lambda'/\vep)X_t(x-z+[0,\vep]))dx\Bigr)\,dz\cr
&\le \vep^{-1}\int_0^\vep E_0^B\Bigl(\exp\Bigl(-\int_0^t \tilde V^{\lambda'/\vep,\vep}(t-s, B_s-B_t+z)ds\Bigr)\Bigr)\,dz\cr
&=E^B_{\mu_\vep}\Bigl(\exp\Bigl(-\int_0^t\tilde V^{\lambda'/\vep,\vep}(s,B_s)ds\Bigr)\Bigr).
\end{align}
Now apply the scaling relation \eqref{scale2vb} with $r=\vep^{-1}$ to see that 
\begin{align}\label{fbound2}
f_{\vep,t}(\lambda')&\le E^B_{\mu_\vep}\Bigl(\exp\Bigl(-\int_0^t \tilde V^{\lambda'\vep,1}(\vep^{-2}s,\vep^{-1}B_s)\vep^{-2}\,ds\Bigr)\Bigr)\cr
&=E^B_{\mu_1}\Bigl(\exp\Bigr(-\int_0^{t\vep^{-2}}\tilde V^{\lambda'\vep,1}(u,B_u)\,du\Bigr)\Bigr)\cr
&=E^B_{\mu_1}\Bigr(\exp\Bigr(-\int_0^1\tilde V^{\lambda'\vep,1}(u,B_u)\,du\Bigr)\psi(B_1)\Bigr),
\end{align}
where
\[\psi(x)=E^B_x\Bigl(\exp\Bigl(-\int_0^{t\vep^{-2}-1}\tilde V^{\lambda'\vep,1}(u+1,B_u)\,du \Bigr)\Bigr),\]
and we have used $\vep^{-2}t\ge1$.  So if $Y_s$ is the Ornstein-Uhlenbeck process $B(e^s-1)e^{-s/2}$, $T=\log(t\vep^{-2})$ and $b=\lambda'\vep$, then, as in the proof of Lemma~\ref{LTequation},
\begin{equation*}
\psi(x)=E^Y_x\Bigl(\exp\Bigl(-\int_0^TH^b(e^s,Y_s)\,ds\Bigr)\Bigr)=E_x^Y\Bigl(Z^b_T\exp\Bigl(-\int_0^TF(Y_s)\,ds\Bigr)\Bigr).
\end{equation*}
By Lemma~\ref{Zbnd2} and then \eqref{extbnd},
\begin{equation}\label{psibound1}\psi(x)\le C_{\ref{Zbnd2}}(b\wedge1)^{-20}P^Y_x(\rho>T)\le  C_{\ref{Zbnd2}}(b\wedge1)^{-20}c_\delta e^{\delta x^2}\vep^{2\lambda_0}t^{-\lambda_0}.
\end{equation}
Take $0<\delta<1/4$ and use the above in \eqref{fbound2} and then H\"older's inequality with $q=(4\delta)^{-1}$ and $p=(1-4\delta)^{-1}$, to get (with $b=\lambda'\vep$)
\[f_{\vep,t}(\lambda')\le c_\delta (b\wedge 1)^{-20} \vep^{2\lambda_0}t^{-\lambda_0}E^B_{\mu_1}\Bigl(\exp\Bigl(-\int_0^1\tilde V^{b,1}(u,B_u)\,du\Bigr)\Bigr)^{1-4\delta}.\]
So recalling \eqref{fbound} and using Lemma~\ref{smalltbnd}, we have
\begin{align*}
&P^X_{X_0}(0<X_t([x,x+\vep])\le \vep^2M)\cr
&\le c(t)X_0(1)\vep^{2\lambda_0}c'_\delta\int_{(\vep M)^{-1}}^\infty (\vep\lambda'\wedge 1)^{-20}\cr
&\phantom{\le c(t)X_0(1)\vep^{2\lambda_0}c'_\delta\int_{(\vep M)^{-1}}^\infty }\times\Bigl[E^B_{\mu_1}\Bigl(\exp\Bigl(-\int_0^1\tilde V^{\lambda'\vep,1}(u,B_u)\,du\Bigr)\Bigr) \Bigr]^{1-4\delta}d\lambda'\cr
&\le c(t)X_0(1)\vep^{2\lambda_0-1}c'_\delta\Bigl[\int_{M^{-1}}^1w^{-20}\,dw\cr
&\phantom{\le c(t)X_0(1)\vep^{2\lambda_0}c'_\delta\int_{(\vep M)^{-1}}^\infty }\times\int_1^\infty \Bigl[E^B_{\mu_1}\Bigl(\exp\Bigl(-\int_0^1\tilde V^{w,1}(u,B_u)\,du\Bigr)\Bigr) \Bigr]^{1-4\delta}\,dw\Bigr]\cr
&\le  c(t)X_0(1)\vep^{2\lambda_0-1}c'_\delta\Bigl[M^{19}+\int_1^\infty C_{~\ref{smalltbnd}}(\delta_0) w^{(-((1/2)+\lambda_0)+\delta_0)(1-4\delta)}\,dw\Bigr]\cr
&\le c(t)X_0(1)\vep^{2\lambda_0-1}M^{19},
\end{align*}
by choosing $\delta$ and $\delta_0$ sufficiently small since $\lambda_0>1/2$.  This gives the required result for $t=1$. By scaling (see, e.g., Exercise II.5.5 in \cite{per02}) it follows for general $t>0$.\qed

\medskip
\noindent {\bf Proof of Lemma~\ref{Zbnd2}.} Lemma~\ref{Fprop}(c) and \eqref{Vbound} show that $F,H^b\le 2$ and so by \eqref{LLEc} and \eqref{LLE},
\begin{align}\label{incbound1}
F(x)-H^b(u,x)&\le e^2\Bigl[e^{-H^b(u,x)}-e^{-F(x)}\Bigr]\cr
&\le e^2\Bigl[ E^X_{\delta_x}\Bigl(\exp\Bigl(-bu\int_0^{u^{-1/2}} X(1,y)\,dy\Bigr)-1(X(1,0)=0)\Bigr)\cr
&\le e^2\Bigl[E^X_{\delta_x}\Bigl(1(0<X(1,0)<u^{-1/8})\Bigr)\cr
&+E^X_{\delta_x}\Bigl(1(X(1,0)\ge u^{-1/8})\exp\Bigl(-bu^{1/2}\frac{\int_0^{u^{-1/2} }X(1,y)dy}{u^{-1/2}}\Bigl)\Bigr)\Bigr]\cr
&\equiv e^2(T_1+T_2).
\end{align}
Theorem~\ref{SBMdensity}(a) implies that 
\begin{equation}\label{T1bound8}
T_1\le Cu^{-(2\lambda_0-1)/8}.
\end{equation}
We apply the modulus of continuity in Proposition~\ref{classmod} with $\xi=3/8$ and $K=2$ and so
set $\rho_0(\omega)=\rho(2,3/8,\omega)$.  Choose $u_0$ so that 
\begin{equation}\label{u0def}
u^{-((\xi/2)-(1/8))}\le 1/2\text{ for all }u\ge u_0.
\end{equation}
If $u\ge u_0$, $\rho_0(\omega)\ge u^{-1/2}$ and $y\in[0,u^{-1/2}]$, then on $\{X(1,0)\ge u^{-1/8}\}$,
\[X(1,y)\ge X(1,0)-|y|^\xi\ge u^{-1/8}-u^{-\xi/2}\ge u^{-1/8}/2\text{ (by \eqref{u0def})}.\]
Therefore for $u\ge u_0$,
\begin{align}\label{T2bound8}
T_2&\le P^X_{X_0}(\rho_0<u^{-1/2})+\exp(-\frac{b}{2}u^{(1/2)-(1/8)})\cr
&\le Cu^{-\delta_{\ref{classmod}}/2}+\exp\Bigl(-\frac{b}{2}u^{3/8}\Bigr).
\end{align}
Combining \eqref{T1bound8} and \eqref{T2bound8} we have
\[F(x)-H^b(u,x)\le C(u^{-(2\lambda_0-1)/8}+u^{-\delta_{\ref{classmod}}/2})+e^2\exp(-\frac{b}{2}u^{3/8}),\]
first for $u\ge u_0$, and then for all $u\ge 1$ by increasing the constant $C$. 
Therefore,
\begin{align*}Z^b_T&\le C\exp\Bigl(\int_0^T e^2\exp\Bigl(-\frac{b}{2}e^{\frac{3s}{8}}\Bigr)\,ds\Bigr)\cr
&=C\exp\Bigl(\frac{8e^2}{3}\int _{b/2}^{(b/2)e^{3T/8}} e^{-w}w^{-1}\,dw\Bigr)\cr
&\le C\exp\Bigl(\frac{8e^2}{3}\int_{(b/2)\wedge 1}^1w^{-1}dw\Bigr)\le C[b\wedge 1]^{-20}.\qed
\end{align*}

\noindent{\bf Proof of Lemma~\ref{smalltbnd}.} Set $\gamma=\frac{1}{2}+\lambda_0\in(1,\frac{3}{2})$ and $\beta=\frac{1}{2}-\frac{\gamma}{4} \in(\frac{1}{8},\frac{1}{4})$.  For $\lambda\ge 1$ use an obvious symmetry to see that 
\begin{align}\label{psiexp}
\psi(\lambda)&\equiv E^B_{\mu_1}\Bigl(\exp\Bigl(-\int_0^1 \tilde V^{\lambda,1}(s,B_s)\,ds\Bigr)\Bigr)\cr
&=E^B_{\mu_1}\Bigl(1(B_0\in[\lambda^{-\beta},1-\lambda^{-\beta}])E^B_{B_0}\Bigl(\exp\Bigl(-\int_0^1 \tilde V^{\lambda,1}(s,B_s)\,ds\Bigr)\Bigr)\cr
&\quad+2E^B_{\mu_1}\Bigr(1(B_0\in[0,\lambda^{-\beta}])E^B_{B_0}\Bigl(\exp-\int_0^1 \tilde V^{\lambda,1}(s,B_s)\,ds\Bigr)\Bigr)\cr
&\equiv T_1+2T_2.
\end{align}

By Feynman-Kac we have 
\begin{equation}\label{FK}\tilde V^{\lambda,1}(t,x)=E^B_x\Bigl(\lambda1_{[0,1]}(B_t)\exp\Bigl(-\int_0^t\frac{\tilde V^{\lambda,1}(t-s,B_s)}{2}\,ds\Bigr)\Bigr).\end{equation}
If $\tilde V^{\lambda,1}_0$ was non-negative continuous, this would follow from \cite{ks91} (p. 268) and for general non-negative Borel initial conditions it follows by taking bounded pointwise limits (use \eqref{LLEc}).
Now bound $\tilde V^{\lambda,1}(u,x)$ above by $V(\lambda)(u,x)=\frac{2\lambda}{2+\lambda u}$ and use this bound in the exponent in \eqref{FK} to conclude that
\begin{align}\label{V1b}
\tilde V^{\lambda,1}(t,x)&\ge \lambda P^B_x(B_t\in [0,1])\exp\Bigl(-\int_0^t\frac{2\lambda}{2(2+\lambda s)}ds\Bigr)\cr
&=\frac{2\lambda}{2+\lambda t}P^B_x(B_t\in[0,1]).
\end{align}
If $x\in [\lambda^{-\beta}/2,1-(\lambda^{-\beta}/2)]$ and $t\le \lambda^{-2\beta(1+\vep)}$, for some $\vep>0$, then by \eqref{V1b},
\begin{align*}
\tilde V^{\lambda,1}(t,x)\ge& \frac{2\lambda}{2+\lambda t}(1-P^B_x(B_t\notin[0,1]))\cr
\ge& \frac{\lambda}{1+(\lambda t/2)}(1-\eta_\lambda),
\end{align*}
where $\eta_\lambda=\exp(-\lambda^{2\beta\vep/8)}$.
Therefore
\begin{align}\label{T1bound7}
T_1&\le \int_{\lambda^{-\beta}}^{1-\lambda^{-\beta}} E^B_x\Bigl(\exp\Bigl(-2\int_0^{\lambda^{-2\beta(1+\vep)}}\frac{(1-\eta_\lambda)\lambda}{(1+(\lambda s/2))2}ds\Bigr)\cr
&\phantom{\le \int_{\lambda^{-\beta}}^{1-\lambda^{-\beta}} E^0_x\Bigl(}\times1(\sup_{s\le\lambda^{-2\beta(1+\vep)}}|B_s-x|\le \frac{1}{2}\lambda^{-\beta})\Bigr)dx\cr
&\qquad +P_0^B\Bigl(\sup_{s\le \lambda^{-2\beta(1+\vep)}}|B_s|>\frac{1}{2}\lambda^{-\beta}\Bigr)\cr
&\le \Bigl[1+\frac{\lambda^{1-2\beta(1+\vep)}}{2}\Bigl]^{-2(1-\eta_\lambda)}+C\exp(-\lambda^{2\beta\vep}/8)\cr
&\le C_\vep\lambda^{-(1-2\beta(1+\vep))2(1-\eta_\lambda)}\le C_\vep\lambda^{-2(1-2\beta(1+2\vep))},
\end{align}
where the last inequality holds first for $\lambda\ge \lambda(\vep)$, and then for all $\lambda\ge 1$ by increasing $C_\vep$. 

For $T_2$ we use the scaling relation \eqref{scale2vb} with $r=\lambda^\beta$ to see that
\[\tilde V^{\lambda,1}(s,x)=\lambda^{2\beta}\tilde V^{\lambda^{1-2\beta},\lambda^\beta}(\lambda^{2\beta}s,\lambda^\beta x),\]
and so
\begin{align}\label{t2exp}T_2&=\lambda^{-\beta}E^B_{\mu_{\lambda^{-\beta}}}\Bigl(\exp\Bigl(-\int_0^1\tilde V^{\lambda^{1-2\beta},\lambda^{\beta}}(\lambda^{2\beta}s,\lambda^\beta B_s)\lambda^{2\beta}\,ds\Bigr)\Bigr)\cr
&=\lambda^{-\beta}E^B_{\mu_1}\Bigl(\exp\Bigl(-\int_0^{\lambda^{2\beta}} \tilde V^{\lambda^{1-2\beta},\lambda^\beta}(u,B_u)\,du\Bigr)\Bigr)\cr
&\le\lambda^{-\beta}E^B_{\mu_1}\Bigl(\exp\Bigl(-\int_0^1\tilde V^{\lambda^{1-2\beta},1}(u,B_u)\,du\Bigr)\hat\psi(B_1)\Bigr),
\end{align}
where
\[\hat\psi(x)=E^B_x\Bigl(\exp\Bigl(-\int_0^{\lambda^{2\beta}-1}\tilde V^{\lambda^{1-2\beta},1}(s+1,B_s)\,ds\Bigr)\Bigr).\]
By \eqref{psibound1} with $b=\lambda^{1-2\beta}$ and $\lambda^{2\beta}$ in place of $t\vep^{-2}$, for any $\delta>0$, 
\[\hat\psi(x)\le C_\delta e^{\delta x^2}\lambda^{-2\beta\lambda_0}.\]
Use this in \eqref{t2exp}, and then for any $p>1$ choose $\delta>0$ so that $1-2\delta>p^{-1}$, and apply H\"older's inequality to see that 
\begin{equation}\label{T2bound7}
T_2\le c_p\lambda^{-\beta(1+2\lambda_0)}E^B_{\mu_1}\Bigl(\exp\Bigl(-\int_0^1\tilde V^{\lambda^{1-2\beta,1}}(u,B_u)\,dy\Bigr)\Bigr)^{1/p}.
\end{equation}

Combine \eqref{psiexp}, \eqref{T1bound7} and \eqref{T2bound7} to conclude that for any $\vep>0$ and $p>1$ there are constants $C_\vep$ and $c_p$ so that 
\begin{align}\label{iterate}
\psi(\lambda)&\le C_\vep\lambda^{-2(1-2\beta(1+2\vep))}+c_p\lambda^{-\beta(1+2\lambda_0)}\psi(\lambda^{1-2\beta})^{1/p}\cr
&\le C_\vep\lambda^{-\gamma+2\vep}+c_p\lambda^{-2\beta\gamma}\psi(\lambda^{\gamma/2})^{1/p},
\end{align}
where the last line uses $\gamma>1$. Now fix $r\in(1,\gamma)$, choose $\vep>0$ so that 
\[\gamma-2\vep>r,\]
and then $p>1$ so that 
\[
2\beta\gamma+\frac{\gamma r}{2p}>r.
\]
The latter is easily seen to be possible by the choice of $r$ and a bit of arithmetic. With these choices of $\vep$ and $p$ we then may choose $\lambda'=\lambda'(r)$ sufficiently large so that 
(by our choice of $\vep$) for $C_\vep$ and $c_p$ as in \eqref{iterate}
\[
\lambda^{-\gamma+2\vep}\le\frac{1}{2C_\vep}\lambda^{-r}\text{ for }\lambda\ge \lambda'
\]
and (by our choice of $p$),
\[
\lambda^{-2\beta\gamma}\lambda^{-\gamma r/(2p)}\le\frac{1}{2c_p}\lambda^{-r}\text{ for }\lambda\ge \lambda'.
\]
Using the two bounds above we see that \eqref{iterate} becomes
\begin{equation}\label{iterate2}
\psi(\lambda)\le \frac{\lambda^{-r}}{2}+\frac{\lambda^{-r}}{2}\lambda^{\gamma r/(2p)}\psi(\lambda^{\gamma/2})^{1/p}\quad\text{for }\lambda\ge \lambda'.
\end{equation}
Let $N=N(r)$ be the minimal natural number such that $2^{(2/\gamma)^N}\ge\lambda'$. Now choose $C_0=C_0(r)\ge 1$ so that for all $1\le \lambda\le 2^{(2/\gamma)^N}$,
\begin{equation}\label{psiBND}\psi(\lambda)\le C_0\lambda^{-r}.
\end{equation}
We now prove by induction on $n\ge N$ that \eqref{psiBND} holds for $1\le\lambda\le2^{(2/\gamma)^n}$.  It holds for $n=N$ by our choice of $C_0$, so assume it holds for $1\le\lambda\le2^{(2/\gamma)^n}$ ($n\ge N$), and let $\lambda\in[2^{(2/\gamma)^n},2^{(2/\gamma)^{n+1}}]$. Then $\lambda\ge \lambda'$ and
\begin{equation}\label{indhyp} \lambda^{\gamma/2}\le 2^{(2/\gamma)^n}.
\end{equation}
Therefore by \eqref{iterate2}, \label{indhyp} and our induction hypothesis,
\begin{align*}\psi(\lambda)&\le \frac{\lambda^{-r}}{2}+\frac{\lambda^{-r}}{2}\lambda^{\gamma r/(2p)}C_0^{1/p}\lambda^{-\gamma r/(2p)}\cr
&\le C_0\lambda^{-r}.\end{align*}
This completes the induction and so for any $r<\frac{1}{2}+\lambda_0$, $\psi(\lambda)\le C_0(r)\lambda^{-r}$ for all $\lambda\ge 1$.\qed

\appendix
\section{Eigenfunction Expansions--Proof of Theorem~\ref{fens}}\label{efunctionexpa}
\setcounter{equation}{0}
\begin{proof} Define an isometry $T:L^2(m)\to L^2(dx)$ by $$Tf(x)=e^{-x^2/4}(2\pi)^{-1/4}f(x).$$ For $\phi$ as above set $q^\phi(x)=\frac{x^2}{8}-\frac{1}{4}+\phi$, and for $g\in C^2\cap L^2(dx)$, let $A^\phi g=T\circ(-L^\phi)\circ T^{-1}\in L^2(dx)$. Some calculus gives $A^\phi g=-\frac{1}{2}g''+q^\phi g$ and so
classical Sturm-Liouville theory as in E.g. 2 in Section 9.5 of \cite{CL55} 
shows that $A^\phi$ has a c.o.n.s. of eigenfunctions, $\{g_n:n\in\Z_+\}$ in $L^2(dx)\cap C^2$ with eigenvalues $\lambda_n\ge 0$ satisfying $\lambda_n\uparrow\infty$.  Clearly $\psi_n=T^{-1}\phi_n$ defines a c.o.n.s. in $L^2(m)$ of $C^2$ eigenfunctions for $L^\phi$, with corresponding eigenvalues $-\lambda_n\le 0$.  

\begin{prop} \label{mercer}Let $A(x,y)$ be a continuous symmetric kernel in $L^2(m\times m)$ with non-negative eigenvalues $\{\mu_n:n\in\Z_+\}$ and corresponding continuous eigenfunctions $\{\psi_n\}$.  Then 
$$A(x,y)=\sum_0^\infty \mu_n\psi_n(x)\psi_n(y),$$
where the convergence is in $L^2(m\times m)$ and absolutely uniformly on compacts. 
\end{prop}
\begin{proof} This follows as in the proof of Mercer's theorem on p. 245 of \cite{RN}.  One needs
to address the lack of compactness here but this is routine. 
\end{proof}

Recall (see e.g. Theorem 3.1.9 and Corollary 3.1.10 of \cite{kni81}) that Range$(R_\lambda^\phi)=D(L^\phi)=D(L)$ and
\begin{equation}\label{res}
(\lambda-L^\phi)R_\lambda^\phi f=f\ \forall f\in L^2(m);\quad R^\phi_\lambda(\lambda-L^\phi)f=f\ \forall f\in D(L).
\end{equation}
The latter clearly implies that $R^\phi_\lambda \psi_n=(\lambda+\lambda_n)^{-1}\psi_n$. If we set $\phi=0$ first, where it is well-known that $\lambda^0_n=n/2$ and $\psi^0_n$ is a multiple of the $n$th Hermite polynomial (see E.g. 2 in Section 9.5 of \cite{CL55}).  The transition density of the corresponding Ornstein-Uhlenbeck process with respect to $m$ is 
\begin{eqnarray}\label{q0}
q^0(t,x,y)&=&(1-e^{-t})^{-1/2}\exp\Bigl\{\frac{-x^2-y^2+2xye^{t/2}}{2(e^t-1)}\Bigr\},\ t>0\\
\nonumber&\equiv&(1-e^{-t})^{-1/2}\exp\Bigl\{h(t,x,y)\Bigr\},
\end{eqnarray}
and so its resolvent $R_\lambda$ is a self-adjoint Hilbert-Schmidt integral operator (since $\sum_n(\lambda+\lambda_n)^{-2}<\infty$) with kernel $G_\lambda(x,y)$ (with respect to $m$), where
\begin{equation}\label{Glamseries} G_\lambda(x,y)=\int_0^\infty q^0(t,x,y)e^{-\lambda t}\,dt=\sum_{n=0}^\infty (\lambda+\lambda^0_n)^{-1}\psi_n^0(x)\psi_n^0(y).
\end{equation}
Here the first expression readily shows $G_\lambda$ is in $L^2(m\times m)$ (use $\Vert q^0(t)\Vert_\infty\le c(1\vee t^{-1/2})$), continuous, and strictly positive, and so by Proposition~\ref{mercer} the above series converges in $L^2(m\times m)$
and absolutely uniformly for $x,y$ in compacts. One final bound we will need is
\begin{equation}\label{Glambound}
\sup_y G_\lambda(x,y)\le c_G(\lambda)e^{x^2/2}.
\end{equation}
This follows easily from the first equality in \eqref{Glamseries} and a bit of calculus which give
\begin{eqnarray*}G_\lambda(x,y)&\le& \int_0^\infty e^{-\lambda t}(1- e^{-t})^{-1/2}\sup_y \exp(h(t,x,y))\,dt\\
&=& \int_0^\infty  e^{-\lambda t}(1- e^{-t})^{-1/2}\exp(h(t,x,xe^{t/2}))\,dt.
\end{eqnarray*}
A short calculation now gives \eqref{Glambound}.

Clearly $L=1-(R_1)^{-1}$ is self adjoint and hence so are $L^\phi=L-\phi$ and $R_\lambda^\phi=(\lambda-L^\phi)^{-1}$.  Moreover $R^\phi_\lambda$ has a symmetric kernel in $L^2(m\times m)$ satisfying $G^\phi_\lambda(x,y)\le G_\lambda(x,y)$, and so is also a non-negative definite Hilbert-Schmidt integral operator.

\begin{lemma} \label{niceG} There is a jointly continuous symmetric version of $G^\phi_\lambda$ satisfying $0\le G^\phi_\lambda\le G_\lambda$ pointwise.

\end{lemma}
\begin{proof}
 It is easy to check using the Feynmann-Kac representation of $R_\lambda^\phi$ that 
\begin{equation*}
R_\lambda-R^\phi_\lambda=R_\lambda^\phi(\phi R_\lambda).
\end{equation*}
This implies that 
\begin{equation}\label{deltaG}
G_\lambda(x,y)-G_\lambda^\phi(x,y)=\int G_\lambda^\phi(x,z)\phi(z) G_\lambda(z,y)\,dm(z).
\end{equation}
If $\Delta_\lambda(x,y)$ be defined by the right-hand side of \eqref{deltaG}, then as $G^\phi_\lambda$ is only defined up to $m\times m$-null sets,  $x\to \Delta_\lambda(x,\cdot)$ is uniquely determined up to $m$-null sets of $x$.  
We also may assume by truncation that $0\le G^\phi_\lambda(x,y)\le G_\lambda(x,y)$ for all $x,y$.  We claim that for fixed $x$
\begin{eqnarray}\label{Deltacont}&&\forall K, \varepsilon>0,\ \exists \delta>0 \text{ s.t. }\forall x,y,y'\in[-K,K],\\
\nonumber&& |y-y'|<\delta\Rightarrow |\Delta_\lambda(x,y)-\Delta_\lambda(x,y')|<\varepsilon.
\end{eqnarray}
Let $S_N(x,y)=\sum_0^N(\lambda+\lambda_N)^{-1}\psi^0_N(x)\psi^0_N(y)$ and 
$$d_N(y,y')=\sum_0^N(\lambda+\lambda_N)^{-2}(\psi_N^0(y)-\psi^0_N(y'))^2.$$
Then by Cauchy-Schwarz and Fatou's Lemma,
\begin{align}
\nonumber&|\Delta_\lambda(x,y)-\Delta_\lambda(x,y')|^2\\
\nonumber&\quad\le c\int G_\lambda(x,z)^2dm(z)\int (G_\lambda(z,y)-G_\lambda(z,y'))^2dm(z)\\
\nonumber&\quad\le \liminf_{N\to\infty}c\int S_N(x,z)^2dm(z)\int(S_N(z,y)-S_N(z,y'))^2dm(z)\\
\nonumber&\quad=\Bigl(\lim_{N\to\infty}c\sum_{n=0}^N (\lambda+\lambda_n)^{-2}\psi^0_n(x)^2\Bigr)\Bigl(\lim_{N\to\infty}\sum_{n=0}^N(\lambda+\lambda_N)^{-2}(\psi^0_n(y)-\psi^0_n(y'))^2\Bigr)\\
\nonumber&\quad\le\Bigl(c\lambda^{-1}\sum_{n=0}^\infty(\lambda+\lambda_n)^{-1}\psi_n^0(x)^2\Bigr)\\
\nonumber&\qquad\qquad\qquad\times\Bigl(d_N(y,y')+2\sum_{n>N}(\lambda+\lambda_n)^{-2}(\psi^0_n(y)^2+\psi_n^0(y')^2)\Bigr)\\
\label{deltaincr}&\quad\le c\lambda^{-1}G_\lambda(x,x)\Bigl(d_N(y,y')+\frac{2}{\lambda+\lambda_N}(G_\lambda(y,y)+G_\lambda(y',y'))\Bigr)\\
\nonumber&\quad\le c(\lambda,K)\Bigl(d_N(y,y')+\lambda_N^{-1}\Bigr),
\end{align}
where the last line holds for $x,y,y'\in[-K,K]$.  By first taking $N$ large and then $\delta$ small enough we get \eqref{Deltacont}. 

Define $\tilde G_\lambda^\phi(x,y)=G_\lambda(x,y)-\Delta_\lambda(x,y)(\le G_\lambda(x,y))$. Note that $x\to\tilde G_\lambda^\phi(x,\cdot)$ is defined up to $m$-null sets as a continuous function-valued map and that $\tilde G^\psi_\lambda=G_\lambda^\phi\ m\times m$-a.e. Pick a version of $\tilde G^\phi_\lambda\in[0, G_\lambda]$ and define
\begin{eqnarray}
\nonumber \hat G_\lambda^\phi(x,y)&\equiv&G_\lambda(x,y)-\int\tilde G^\phi_\lambda(z,x)\phi(z)G_\lambda(z,y)\,dm(z)\\
\label{hatGdef}&\equiv&G_\lambda(x,y)-\hat\Delta_\lambda(x,y).
\end{eqnarray}
We claim that $\hat G^\phi_\lambda$ is a jointly continuous version of $G_\lambda^\phi$. We have $\tilde G_\lambda^\phi(x,y)=\tilde G^\phi_\lambda(y,x)$ $m$-a.e. and so for $m$-a.a. $(x,y)$,
$$\hat\Delta_\lambda(x,y)=\int\tilde G^\phi_\lambda(x,z)\phi(z)G_\lambda(z,y)\,dm(z)=\Delta_\lambda(x,y),
$$
which implies (by \eqref{deltaG}) $\hat G^\phi_\lambda(x,y)=G_\lambda^\phi(x,y)$ $m\times m$-a.e.

Just as in the proof of \eqref{Deltacont} (replace $G_\lambda^\phi(x,z)$ by $\tilde G^\phi_\lambda(z,x)$), we have 
\begin{eqnarray}\label{hatDeltacont}&&\forall K, \varepsilon>0,\ \exists \delta>0 \text{ s.t. }\forall x,y,y'\in[-K,K],\\
\nonumber&& |y-y'|<\delta\Rightarrow |\hat\Delta_\lambda(x,y)-\hat\Delta_\lambda(x,y')|<\varepsilon.
\end{eqnarray}
Fix $y\in\R$.  Then 
\begin{align}\label{hatdeltincr}
 |\hat\Delta_\lambda&(x,y)-\hat\Delta_\lambda(x',y)|^2\\
\nonumber&\le\int (\tilde G^\phi_\lambda(z,x)-\tilde G^\phi_\lambda(z,x'))^2\phi(z)^2\,dm(z)\,\int G_\lambda(z,y)^2\,dm(z)\\
\nonumber&\le c\int (\tilde G^\phi_\lambda(z,x)-\tilde G^\phi_\lambda(z,x'))^2\,dm(z)\,\lambda^{-1}G_\lambda(y,y)\ \ \text{(as in \eqref{deltaincr})}.
\end{align}
For each $z$, \eqref{Deltacont} implies that $\lim_{x'\to x}(\tilde G^\phi_\lambda(z,x)-\tilde G^\phi_\lambda(z,x'))^2=0$.  To apply Dominated Convergence we may use the bound in \eqref{Glambound} to see that ($x$ fixed)
\begin{align*}
\sup_{|x'-x|\le 1, z}&(\tilde G^\phi_\lambda(z,x)-\tilde G^\phi_\lambda(z,x'))^2\\
&\le 2\sup_zG_\lambda(x,z)^2+2\sup_{|x'-x|\le 1,z}G_\lambda(x',z)^2\\
&\le 2c_G(\lambda)^2(e^{x^2}+\sup_{|x'-x|\le 1}e^{(x')^2})=C(\lambda,x).
\end{align*}
This shows that the expression on the right-hand side of \eqref{hatdeltincr} approaches $0$ as $x'\to x$. This, together with \eqref{hatDeltacont} implies the joint continuity of $\hat\Delta_\lambda$, and so by \eqref{hatGdef}, the required joint continuity of $\hat G^\phi_\lambda$. The other properties of $\hat G^\phi_\lambda$ are then immediate.
\end{proof}

We return to the proof of Theorem~\ref{fens}(a). Recall we use $P_x$ for the law of the Ornstein-Uhlenbeck process starting at $x$. The fact that $f\ge 0$ and $m(f>0)>0$ implies $R^\phi_\lambda f(x)>0$ for all $x$ and the continuity of $G_\lambda^\phi$ imply $G^\lambda\ge 0$ and is strictly positive for $m\times m$ a.a. $(x,y)$.  It is then easy to show that $\psi_0$ cannot change sign and so may be taken to be non-negative and hence $\lambda_0$ must be simple  as in Section 23.1 of \cite{Lax02}. If for some $x_0$ $\psi(x_0)=0$, then we also have $\psi_0'(x_0)=0$ which would imply $\psi_0\equiv 0$ as it is the solution of nice second order ode.  Hence $\psi_0>0$ and the proof of (a) is complete.

(b),(c) Lemma~\ref{niceG} allows us to apply Proposition~\ref{mercer} to see that 
\begin{equation}\label{Glamconv} G^\phi_\lambda(x,y)=\sum_0^\infty\frac{1}{\lambda+\lambda_n}\psi_n(x)\psi_n(y)
\end{equation}
 in $L^2(m\times m)$ and absolutely uniformly on compacts.
A simple comparison of coefficients then shows that
\begin{equation}\label{qseries} q(t,x,y)\equiv\sum_{n=0}^\infty e^{-\lambda_n t}\psi_n(x)\psi_n(y) 
\end{equation}
 in $L^2(m\times m)$ for each $t>0$ and absolutely uniformly in $(t,x,y)\in[\vep,\infty)\times[\vep,\vep^{-1}]$ for all $\vep>0$,
where the limit $q$ is necessarily jointly continuous in $(t,x,y)$. Moreover 
we can integrate both sides of \eqref{qseries} with respect $e^{-\lambda t}f(y)\,dt\,dm(y)$, where $f$ is continuous on compact support (the absolute uniform convergence gives the integrability required for interchange of limits) to see that 
$$\int_0^\infty e^{-\lambda t}\int q(t,x,y) f(y)\,dm(y)\,dt=\int G_\lambda(x,y)f(y)\,dm(y)\text{ for all }x.$$
This shows that $q(t,x,y)$ is a jointly continuous version of the transition density of the killed Ornstein-Uhlenbeck process $Y^\phi$. To obtain \eqref{qbound1} one proceeds as in the proof of Theorem~1.1 in \cite{Uch80}. If $t\ge s^*(\delta)$, then by \eqref{qseries},
\begin{align}\label{qbound2}
e^{\lambda_0t}q(t,x,y)&\le \sum_0^\infty e^{-(\lambda_n-\lambda_0)t}|\psi_n(x)||\psi_n(y)|\cr
&\le \sum_0^\infty e^{-(\lambda_n-\lambda_0)s^*}|\psi_n(x)||\psi_n(y)|\cr
&\le e^{\lambda_0 s^*}\sqrt{q_{s^*}(x,x)}\sqrt{q_{s^*}(y,y)}\cr
&\le e^{\lambda_0 s^*}\sqrt{q^0_{s^*}(x,x)}\sqrt{q^0_{s^*}(y,y)}\cr
&\le c(\delta)\exp(\delta(x^2+y^2)),
\end{align}
where in the last line we use \eqref{q0} and the definition of $s^*$. 

(d) The $L^2$ convergence in \eqref{qseries} readily implies 
$$P_x(\rho_\phi>t)=\int q(t,x,y)\,dm(y)=\sum_0^\infty e^{-\lambda_n t}\psi_n(x)\int\psi_n(y)\,dm(y),$$
where the convergence in the series is in $L^2(m)$. It follows that 
\begin{equation}\label{rhoexp}P_x(\rho_\phi>t)=e^{-\lambda_0 t}[\psi_0(x)\theta+r(t,x)],
\end{equation}
where 
\begin{equation}\label{rdef} r(t,x)=e^{\lambda_0 t}\sum_{n=1}^\infty e^{-\lambda_nt}\psi_n(x)\int \psi_n(y)\,dm(y),
\end{equation}
where the series converges in $L^2(m)$ and we recall that $\theta=\int\psi_0\,dm$.  One now proceeds as in the derivation of \eqref{qbound2} to see that in fact the 
above convergence is absolute uniform for $x,y$ in compacts and that 
\begin{eqnarray}\label{rbound2}&&\text{ for any $\delta>0$ there is a $c_\delta$ so that for all }t\ge s^*(\delta),\\
\nonumber && |r(t,x)|\le \sum_{n=1}^\infty e^{-(\lambda_n-\lambda_0)t}|\psi_n(x)|\int|\psi_n(y)|\,dm(y)\le c_\delta e^{\delta x^2}e^{-(\lambda_1-\lambda_0)t}.
\end{eqnarray}
 We will return to \eqref{rbound} for $t<s^*(\delta)$ below.

As in the derivation of \eqref{qbound2} we also have
\begin{equation}\label{q0bound}\sum_{n=0}^\infty e^{-\lambda_n s^*}|\psi_n(x)|\,|\psi_n(y)|\le \Bigl(q(s^*,x,x)q(s^*,y,y)\Bigr)^{1/2}\le c_0(\delta)\exp\{\delta(x^2+y^2)\}.
\end{equation}
Therefore, for $T>s^*$ (the convergence in the series below is in $L^2(m)$ and absolutely uniformly for $x$ in compacts),
\begin{eqnarray*}
e^{\lambda_0 T}P_x(\rho_\phi>T)&=&e^{\lambda_0 T}\Bigl|\sum_{n=0}^\infty e^{-\lambda_n T}\psi_n(x)\int\psi_n(y)\,dm(y)\Bigr|\\
&\le& e^{\lambda_0s^*}\sum_{n=0}^\infty e^{-\lambda_n s^*}|\psi_n(x)|\int |\psi_n(y)|\,dm(y)\\
&\le&c_0(\delta)e^{\lambda_0 s^*}e^{\delta x^2}\int e^{\delta y^2}\,dm(y)\quad\text{by \eqref{q0bound})}\\
&\equiv&c_1(\delta)e^{\delta x^2}.
\end{eqnarray*}
Combine \eqref{rhoexp}, \eqref{rbound2} with the above to see that 
$$\theta\psi_0(x)=\lim_{T\to\infty} e^{\lambda_0 T}P_x(\rho_\phi>T)\le c_1(\delta) e^{\delta x^2}.$$
It remains to prove \eqref{rbound} for $0\le t\le s^*(\delta)$. By definition and the above, we have
\begin{align*}
|r(t,x)|&\le e^{\lambda_0 t}P_x(\rho_\phi>t)+\theta\psi_0(x)\cr
&\le (e^{\lambda_0s^*(\delta)}+c_1(\delta))e^{\delta x^2}\le c_\delta e^{-(\lambda_1-\lambda_0)t}e^{\delta x^2}.
\end{align*}

(e) Fix $x\in\R$.  If $0\le t_1<t_2<T$ and $\phi_i$ are bounded measurable functions, then
\begin{eqnarray}&&E_x(\prod_1^2\phi_i(Y_{t_i})1(\rho_\phi>T))/P_x(\rho_\phi>T)\nonumber\\
\nonumber&&\quad=E_x(\prod_1^2\phi_i(Y_{t_i})1(\rho_\phi>t_2)P_{Y_{t_2}}(\rho_\phi>T-t_2))/P_x(\rho_\phi>T)\\
\nonumber&&\quad=\Bigl[E_x(\prod_1^2\phi_i(Y_{t_i})1(\rho_\phi>t_2)\frac{\theta\psi_0(Y_{t_2})e^{-\lambda_0(T-t_2)}}{\theta\psi_0(x)e^{-\lambda_0 T}}\\
\label{Tdispl}&&\qquad+E_x(\prod_1^2\phi_i(Y_{t_i})1(\rho_\phi>t_2)\frac{e^{-\lambda_0(T-t_2)}r(T-t_2,Y_{t_2}))}{\theta\psi_0(x)e^{-\lambda_0 T}}\Bigr]\\
&&\qquad\quad\times\frac{\theta\psi_0(x)e^{-\lambda_0 T}}{\theta\psi_0(x)e^{-\lambda_0 T}+r(T,x)e^{-\lambda_0 T}}\nonumber\\
&&\quad\equiv[T_1+T_2]\times T_3,\nonumber
\end{eqnarray}
where \eqref{rhotail} is used in the second equality.  By using \eqref{rbound} we have 
\begin{eqnarray*}
|T_2|&\le&\frac{\Vert\phi_1\Vert_\infty\Vert\phi_2\Vert_\infty}{\theta\psi_0(x)}e^{\lambda_0 t_2}E_x(c_\delta e^{\delta Y^2_{t_2})}e^{-(\lambda_1-\lambda_0)(T-t_2)}\\
&\le&c(x)e^{\lambda_0t_2}e^{-(\lambda_1-\lambda_0)(T-t_2)}\to 0\text{ as }T\to\infty.
\end{eqnarray*}
By \eqref{rbound} we also have
\begin{eqnarray*}
|T_3-1|&\le&\frac{|r(T,x)|e^{-\lambda_0 T}}{\theta\psi_0(x)e^{-\lambda_0 T}-|r(T,x)|e^{-\lambda_0 T}}\\
&\le &\frac{c_\delta e^{\delta x^2}e^{-(\lambda_1-\lambda_0)T}}{\theta\psi_0(x)-c_\delta e^{\delta x^2}e^{-(\lambda_1-\lambda_0)T}}\to0\text{ as }T\to\infty.
\end{eqnarray*}
Use these last results in \eqref{Tdispl} to conclude that
\begin{eqnarray*}
&&\lim_{T\to\infty}E_x\Bigl(\prod_1^2\phi_i(Y_{t_i})|\rho_\phi>T\Bigr)\\
&&\quad=E_x\Bigl(\prod_1^2\phi_i(Y_{t_i})1(\rho_\phi>t_2)\psi_0(Y_{t_2})e^{\lambda_0t_2}\Bigr)/\psi_0(x)\\
&&\quad=\int\int q(t_1,x,x_1)q(t_2-t_1,x_1,x_2)\prod_1^2\phi_i(x_i)\frac{\psi_0(x_2)}{\psi_0(x)}e^{\lambda_0 t_1}e^{\lambda_0(t_2-t_1)}dm(x_1)dm(x_2)\\
&&\quad=\int\int\tilde q(t_1,x,x_1)\tilde q(t_2-t_1,x_1,x_2)\prod_1^2\phi_i(x_i)dm(x_1)dm(x_2).
\end{eqnarray*}
Similar reasoning gives the convergence of the $k$-dimensional distributions for all $k$.

It remains to establish tightness. If $0\le s<t\le t_0<T$ with $t-s\le 1$, then
\begin{equation}\label{tightbnd}E_x((Y_t-Y_s)^4|\rho_\phi>T)\le E_x((Y_t-Y_s)^4P_{Y_t}(\rho_\phi>T-t))/P_x(\rho_\phi>T).\end{equation}
It is now easy to use \eqref{rhotail}, \eqref{rbound} and \eqref{psi0bnd}, together with Cauchy-Schwarz to bound the right-hand side of \eqref{tightbnd} by $c'(x,t_0)(t-s)^2$ at least for $T>T_0(x,t_0)$.  This gives the required tightness and the proof of (e) is complete.
\end{proof}
\section{A Tauberian Theorem--Proof of Lemma~\ref{Taub}}\label{Tauberian}
\begin{proof} (a) $U(a)\le \int_0^a e^{1-(x/a)}dU(x)\le e\hat U(a^{-1})\le eC_2a^p$.

\noindent (b) Define $\beta$ so that $d_1=\frac{C_1}{2}\beta^{-p}$.  Then $\beta\ge 2p$ implies $e^{-x/2}x^p$ is decreasing on $[\beta,\infty)$ and so 
\begin{equation}\label{calcbnd}
\int_\beta^\infty e^{-y}y^p\,dy\le e^{-\beta/2}\beta^p\int_\beta^\infty e^{-y/2}\,dy=2e^{-\beta}\beta^p.
\end{equation}
If $\lambda\ge\beta(\ge \underline\lambda)$, 
an integration by parts gives
\begin{align*}
\hat U(\lambda)=\int_0^\infty \lambda e^{-\lambda x}U(x)\,dx&=\int_0^\infty e^{-y}U(y/\lambda)\,dy\cr
&\le U(\beta/\lambda)+\int_\beta^\infty e^{-y}U(y/\lambda)\,dy\cr
&\le U(\beta/\lambda)+eC_2\lambda^{-p}\int_\beta^\infty e^{-y}y^p\,dy,
\end{align*}
the last by (a). So rearrange the above and use \eqref{calcbnd} and \eqref{LTLbound} to see that 
\begin{equation}\label{ULB}
U(\beta/\lambda)\ge C_1\lambda^{-p}-eC_2\lambda^{-p}2e^{-\beta}\beta^p
\end{equation}
Now 
\begin{align*}
e^{-\beta}\beta^p\le \sup_x(e^{-x/2}x^p)e^{-\beta/2}&=e^{-p}(2p)^pe^{-\beta/2}\cr
&\le \frac{C_1}{4eC_2},
\end{align*}
where the last line follows from the definition of $\beta$.  Use this in \eqref{ULB}
to conclude $U(\beta/\lambda)\ge (C_1/2)\lambda^{-p}$ for all $\lambda\ge \beta$.  Now set $a=\beta/\lambda\le 1$, to see that $U(a)\ge \frac{C_1}{2}\beta^{-p}a^p$, which gives \eqref{ULbound}.  \eqref{ULboundb} now follows.
\end{proof}

\section{Improved Modulus of Continuity--Proof of Theorem~\ref{improvedmod}}\label{impmoda}

For $K,N\in\N$, let $T_K=\inf\{t\ge 0: \sup_x X(t,x)\ge K\}\wedge K$, and 
\begin{align*}Z^0(K,N)=&\{(t,x):t\le T_K,\,|x|\le K,\ \exists(\hat t,\hat x), \text{ s.t. }\hat t\le T_K,\cr
&\quad X(\hat t,\hat x)\le 2^{-N},\text{ and }d((t,x),(\hat t,\hat x))\le 2^{-N}\}.
\end{align*}
We let $C^{1,+}_0$ denote the space of continuously differentiable, compactly supported non-negative functions on the line. 
\begin{lemma}\label{smoothic} Assume $X_0\in C_0^{1,+}$ and $\xi\in(0,1)$. For any $K\in\N$, there is an $N_0=N_0(K,\omega)\in\N$ a.s. such that for all $N\ge N_0$ and $(t,x)\in Z^0(K,N)$, whenever $t'\le T_K$ and $d((t',x'),(t,x))\le 2^{-N}$, then $X(t',x')\le 2^{-N\xi}$. 
\end{lemma}
\begin{proof} This is proved as for Theorem 2.2 of \cite{mp11}, using the iteration
result Corollary~4.2 of \cite{mps06}.  The computations are easier than in the coloured noise setting of  \cite{mps06} as we now have spatial orthogonality.  The only difference is that in the above references one was looking at the difference of two solutions rather 
than a single solution and this means the lead term in 
\[X(t,x)=P_tX_0(x)+\int_0^t\int p_{t-s}(y,x)\sqrt{X(s,y)}dW(s,y),\]
(see e.g., (III.4.11) of \cite{per02} but now with $t_0=0$ as we have a nice initial density)
 does not cancel and must be handled separately. For this, our hypothesis on $X_0$ implies that $P_tX_0(x)$ is $d$-Lipschitz continuous in $(t,x)$. As a result the iteration in \cite{mps06} may be first applied to the above stochastic integral term and then transferred over to $X$. 
\end{proof}
We next consider  general initial conditions.  For $N\in\N$, set 
\[Z(N)=\{(t,x):t> 0,\ \exists\, (\hat t,\hat x)\text{ s.t. }X(\hat t,\hat x)\le 2^{-N},\ d((\hat t, \hat x),(t,x))\le 2^{-N}\}.\]
\begin{prop} \label{genie}Assume $X_0$ is a finite measure on the line and $\xi\in(0,1)$.  For any $K\in\N$ there is an $N_1(K,\omega)\in \N$ a.s. so that for all $N\ge N_1$ and for all $(t,x)\in Z(N)$ with $t\ge K^{-1}$, $d((t',x'),(t,x))\le 2^{-N}$ implies $X(t',x')\le 2^{-N\xi}$.  
\end{prop}
\begin{proof} 
Let $X_0'\in C_0^{1,+}$, not identically $0$, and let  $X_0$ be as in the Proposition.  Consider the decreasing sequence of events,
\begin{align*}
B_{K'}=&\{X:\exists\,N_0\in\N\text{ s.t. }\forall N\ge N_0,\ \forall (t,x)\in Z^0(K',N),\cr
&\qquad t'\le T_{K'},\ d((t',x'),(t,x))\le 2^{-N}\text{ implies }X(t',x')\le 2^{-N\xi}\}.
\end{align*}
For $K\in\N$ fixed, Lemma~\ref{smoothic} and the Markov property imply that 
\[\text{for }P^X_{X_0'}-\text{a.a. }\omega,\ \forall K'\in\N,\text{ on }\{\omega:T_{K'}>(2K)^{-1}\},\ P_{X_{1/2K}(\omega)}(B_{K'-1})=1.\]
Clearly for $P^X_{X_0'}$-a.a. $\omega$, for $K'$ sufficiently large, $T_{K'}(\omega)>(2K)^{-1}$.  It follows that 
\[\text{for }P^X_{X_0'}-\text{a.a. }\omega,\ P^X_{X_{1/2K}(\omega)}(\liminf_{K'} B_{K'})=1.\]
Under $P^X_{X_{1/2K}(\omega)}$, by taking $K'$ sufficiently large so that
\[\{(t,x):X(t,x)>0\}\subset [0,K'-1]\times[-K'+1,K'-1]\ \text{and }\sup_{(t,x)}X(t,x)<K',\]
we see that
\begin{align*}
\liminf_{K'}B_{K'}&\subset\{X:\exists N_0\in\N^{\ge 4K}\text{ s.t. for } N\ge N_0,\ (t,x)\in Z(N),\cr
&\qquad d((t',x'),(t,x))\le 2^{-N}\ \text{implies }X(t',x')\le 2^{-N}\}\cr
&\equiv B_\infty.
\end{align*}
So for $P^X_{X_0'}$-a.a. $\omega$, $P^X_{X_{1/2K}(\omega)}(B_\infty)=1$.  By the equivalence of $P^X_{X_0}(X_{1/2K}\in\cdot)$ and $P^X_{X_0'}(X_{1/2K}\in\cdot)$ (see Corollary~2.2 of \cite{EP91}), we have  for $P^X_{X_0}$-a.a. $\omega$, $P^X_{X_{1/2K}(\omega)}(B_\infty)=1$. 
Another application of the Markov property now gives the required result. For this last step note that $(t,x)\in Z(N)$, $t\ge 1/K$ and $N\ge 4K$ imply that if $(\hat t,\hat x)$ are as in the definition of $Z(N)$ and $d((t',x'),(t,x))\le 2^{-N}$, then $\hat t\ge K^{-1}-2^{-N/2}\ge (2K)^{-1}$ and similarly $t'\ge (2K)^{-1}$. \end{proof}

{\bf Proof of Theorem \ref{improvedmod}.} This now follows upon noting that $Z\subset Z(N)$ and applying the above Proposition and a trivial interpolation argument.{\flushright\qed}

\section*{Acknowledgement} We thank Roger Tribe for sharing his thoughts and unpublished notes on this problem with us. 

\def\cprime{$'$} \def\cprime{$'$} \def\cprime{$'$}

\end{document}